\documentclass[11pt]{article}
\usepackage[margin=1.3in]{geometry}
\usepackage{amsmath, amsfonts, amssymb, amsthm}
\usepackage{graphicx}
\usepackage{hyperref}
\hypersetup{
    colorlinks,%
    citecolor=black,%
    filecolor=black,%
    linkcolor=black,%
    urlcolor=black
}
\usepackage{color}

\newtheorem{thm}{Theorem}[section]
\newtheorem{prop}[thm]{Proposition}

\newtheorem{lemma}[thm]{Lemma}

\newtheorem{preremark}[thm]{Remark}

\numberwithin{equation}{section}


\newcommand{\R}{\mathbb R}
\newcommand{\eps}{\varepsilon}
\newcommand{\To}{\longrightarrow}

\newcommand{\lap} {\Delta}

\newcommand{\dd} {\; \mathrm{d}}

\newcommand{\comment}[1] {}
\DeclareMathOperator*{\osc}{osc}

\newcommand{\la}{\lambda}

\title{Propagation in a non local reaction diffusion equation with spatial and genetic trait structure\bigskip}

\author{\medskip Henri Berestycki, \quad Tianling Jin, \quad Luis Silvestre}

\date{\today}

\begin{document}

\maketitle

\begin{abstract}
We study existence and uniqueness of traveling fronts, and asymptotic speed of propagation for a non local reaction diffusion equation with spatial and genetic trait structure.
\end{abstract}

\section{Introduction}
In this article, we study  bounded non-negative solutions of reaction-diffusion equations with non-local interactions of the type:
\begin{equation}\label{eq:non local interactions}
u_t - \Delta u + \alpha g(y) u = \left(1 - \int_{\R^N} K(z) u(t,x,z) \dd z \right) u,\quad (t,x,y)\in\R^+\times\R^m\times\R^N,
\end{equation}
where $\R^+=(0,+\infty),  m\ge 1, N\ge 1$, $\Delta$ is the Laplacian operator in $(x,y)$ variables, $\alpha$ is a positive constant, $K$ and $g$ are given non-negative functions. 

This equation \eqref{eq:non local interactions} arises in population dynamic models, see, e.g., \cite{DFP}, \cite{ADP} and equations (2) and (3) in \cite{BC}. It describes a population which is structured by a set of quantitative genetic traits denoted $y \in \R^N$ and depends on the spatial location $x\in\R^m$. This population is subject to  migration, mutations, growth, selection and intraspecific competition. The term $\Delta_x u$ accounts for migration by random dispersal through Brownian motion. For simplification, we assume here that mutations also involve a random dispersion in the $y$-variables, hence the Laplacian term $\Delta_y u$ with respect to $y$ in the equation. Note that to simplify notations, we have taken the same diffusion coefficient, 1, both for the spatial diffusion and the diffusion in the trait space. The results remain unchanged if instead of $\Delta u$ in the equation above we have $ d_x \Delta_x u+d_y \Delta_y u$ with $d_x$ and $d_y$ positive constants. Next, local selection involves a fitness function represented here by the term $\alpha g(y)$. The effective growth rate is thus given by $u - \alpha g(y) u$. We assume that at every point in space the selection favors the trait $y=0$ which  translates into condition \eqref{eq:condition g} below. 
In this context, $\alpha$ can be interpreted as an intensity of genetic pressure.

Lastly, at every point $x$ in space and time $t$, each individual is subject to competition with all the individuals at the same location but with all possible values of the trait. The intensity of the competition can furthermore depend on the genetic traits of the competitors through a kernel $K= K(y)$. 
 Let $u= u(t,x,y)$ denote the density of this postulation depending on time $t$, location $x$ and trait $y$. These various effects combine into equation 
 \eqref{eq:non local interactions} for $u$.  When the intraspecific competition does not distinguish between the genetic features of competitors, the equation reads:
\begin{equation}   \label{eq:integral}
u_t - \Delta u + \alpha g(y) u = \left(1 -  k\int_{\R^N} u(t,x,z) \dd z \right) u,\quad (t,x,y)\in\R^+\times\R^m\times\R^N,
\end{equation}
 where $k>0$ is a constant.  This is  a particular case of \eqref{eq:non local interactions} above.
 
This paper is about nonnegative bounded solutions of the reaction-diffusion equation  with nonlocal interaction \eqref{eq:non local interactions}. 
We study the long time behavior of solutions and the traveling front solutions of \eqref{eq:non local interactions}.  

If not otherwise stated, we always assume the kernel $K$ to satisfy
\begin{equation}\label{eq:condition-1}
K\not\equiv 0,\ 0\le K(z)\le \kappa e^{\kappa |z|}, \quad\forall\ z\in\R^N\ \mbox{ with some fixed }\kappa>0,
\end{equation}
and $g$ to be a H\"older continuous function satisfying
\begin{equation}\label{eq:condition g}
\ g(0)=0,\ 0<g(y)\le \kappa e^{\kappa|y|} \mbox{ in } \R^N\setminus\{0\}, \mbox{ and } \lim_{|y|\to+\infty}g(y)=+\infty.
\end{equation}
With this assumption on $g$, the term $\alpha g(y) u$ expresses the preference that the most favorable trait is $y=0$. Increasing the value of $\alpha>0$ creates a tendency of the solution to decrease for all values $y \neq 0$. When $|y|$ is sufficiently large, it offsets the reproduction term in the equation in the sense that $1-\alpha g(y) <0$, and thus, the effective birth rate is negative for large $|y|$.

We will show that there is a constant $\bar \alpha$, which will be uniquely determined in Proposition \ref{lem:choice of alpha}, so that if $\alpha > \bar \alpha$, the solution $u(t,x,y) \to 0$ uniformly as $t \to +\infty$. This means that too large a genetic pressure always leads to extinction whatever the initial datum is. 

Our main results concern the case $\alpha < \bar \alpha$.  
The first one describes the planar traveling wave solutions of \eqref{eq:non local interactions}. These are solutions of the type $u(x\cdot e-ct,y)$, where $c\in\R$ is a constant, $e\in\mathbb S^{m-1}$, $u:\R\times \R^{N}\to\R$ solves 
\begin{equation} \label{e:traveling-wave-intro}  
-c u_s(s,y) - \Delta u(s,y) + \alpha g(y) u(s,y) = \left(1 - \int_{\R^N} u(s,z)K(z)  \dd z \right) u(s,y),
\end{equation}
with $s\in\R,\, y\in \R^N$ and
such that 
\begin{equation} \label{e:traveling-wave-infinity-intro} 
\lim_{s\to+\infty}u(s,\cdot)\equiv 0 \quad\mbox{and}\quad \liminf_{s\to-\infty}u(s,\cdot)>0.
\end{equation}
We also consider the stationary solution:
\begin{equation} \label{e:stat}  
 - \Delta v (x, y)  + \alpha g( y) v(x, y)  = \left(1 - \int_{\R^N} v(x,z)K(z)  \dd z \right) v(x,y), \quad (x,y) \in\R^m\times \R^N. 
\end{equation}

The results in the next theorem characterize these stationary solutions as well as the traveling wave solutions.
\begin{thm} \label{t:intro-stationary} Assume that $0<\alpha < \bar \alpha$. 
There exists a positive number $c^*$ such that
\begin{itemize} 
\item
There exists a unique positive bounded stationary solution $v(x,y)$ of \eqref{eq:non local interactions}, that is a solution of \eqref{e:stat}. Moreover, this stationary solution is independent of $x$. We denote it $v= V(y)$.
\item If $0\leq c < c^*$, there exists a unique positive bounded solution of \eqref{e:traveling-wave-intro}. Moreover, the solution is independent of $s$. Therefore, it is equal to $V(y)$. 
\item For all $c \geq c^*$, there exists a unique nonnegative bounded solution $u$ of \eqref{e:traveling-wave-intro} such that \eqref{e:traveling-wave-infinity-intro} holds. Moreover, in this case,  $\lim_{s\to-\infty}u(s,y)=V(y)$ uniformly in $y$.
\end{itemize}
\end{thm}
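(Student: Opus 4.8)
The plan is to build the traveling-wave and stationary theory on top of the known parabolic long-time behavior and a spectral analysis of the linearized operator. First I would study the linear operator $L_\alpha := -\Delta_y + \alpha g(y)$ on $\R^N$. Since $g$ is nonnegative, continuous, vanishes only at $y=0$, and grows to $+\infty$, $L_\alpha$ has compact resolvent and a principal eigenvalue $\mu(\alpha)$ with a positive, exponentially decaying eigenfunction $\varphi_\alpha$; the number $\bar\alpha$ of Proposition~\ref{lem:choice of alpha} is precisely the value for which $\mu(\bar\alpha)=1$, so $0<\alpha<\bar\alpha$ means $\mu(\alpha)<1$, i.e. the linearization at $0$ of the $x$-independent problem, $u_t = -L_\alpha u + u$, is linearly unstable. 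This is the engine that produces nontrivial solutions and a spreading speed. For the stationary problem \eqref{e:stat}: separating variables, a bounded positive $x$-independent solution $V$ must satisfy $-\Delta_y V + \alpha g(y) V = (1-\theta) V$ with $\theta = \int K(z) V(z)\dd z$, forcing $1-\theta = \mu(\alpha)$ and $V$ a positive multiple of $\varphi_\alpha$; the constraint $\int K V = \theta = 1-\mu(\alpha)>0$ then pins the multiple uniquely. So existence and uniqueness \emph{among $x$-independent} solutions is essentially an eigenvalue computation. The real work is to prove that \emph{every} bounded positive stationary solution is $x$-independent: here I would use a sliding/moving-plane argument in the $x$ variables exploiting translation invariance, together with a Liouville-type comparison — since the nonlocal term $\int K(z) v(x,z)\dd z$ depends on $x$ only through $v$, and the equation has no $x$-dependent coefficients, one shows $v$ cannot attain a strict interior spatial oscillation; a cleaner route is to bound $\sup_x v$ and $\inf_x v$ by comparison with the $x$-independent sub/supersolutions $\epsilon\varphi_\alpha$ and $M\varphi_\alpha$ and then run a sweeping argument, or to use the strong maximum principle on the difference of $v$ with its shifts $v(\cdot+h,\cdot)$.

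For the traveling-wave part I treat the two regimes separately. For $0\le c<c^*$ I claim the only bounded positive solution of \eqref{e:traveling-wave-intro} is $V(y)$: the heuristic is that a genuine front moving at subcritical speed would have to decay at $s\to+\infty$ slower than exponentially or would violate the dispersion relation obtained by linearizing at $0$ — plug an ansatz $u\sim e^{-\lambda s}\varphi_\alpha(y)$ into \eqref{e:traveling-wave-intro} to get $\lambda^2 + c\lambda + (1-\mu(\alpha)-1) \cdot(-1)$, more precisely $\lambda^2 + c\lambda - (1-\mu(\alpha)) = 0$, wait — carefully, the linearized equation is $-cu_s - \Delta u + \alpha g(y) u = u$, so with $u = e^{-\lambda s}\varphi_\alpha(y)$ one gets $c\lambda - \lambda^2 + \mu(\alpha) = 1$, i.e. $\lambda^2 - c\lambda + (1-\mu(\alpha)) = 0$, which has real positive roots exactly when $c \ge c^* := 2\sqrt{1-\mu(\alpha)}$. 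For $c<c^*$ no admissible exponential decay rate exists, so a bounded solution satisfying \eqref{e:traveling-wave-infinity-intro} cannot decay at $+\infty$ — contradiction; and then a bounded positive solution with no imposed limits must, by a sliding argument in $s$ (the equation is invariant under $s$-translation) and the stationary uniqueness just proved, coincide with $V$. For $c\ge c^*$ I construct a front by the standard route: build ordered sub- and supersolutions (supersolution $\min(M\varphi_\alpha, A e^{-\lambda s}\varphi_\alpha)$ with $\lambda$ a root of the dispersion relation; subsolution a truncated $(e^{-\lambda s}-qe^{-\mu s})_+\varphi_\alpha$ in the Hamadène–Fife style, checking the nonlocal term stays controlled because $K$ has exponential growth at most $\kappa$ and $\varphi_\alpha$ decays faster), pass to the limit of parabolic solutions or use monotone iteration to get a solution between them, establish monotonicity in $s$ by sliding, identify the limit at $s\to-\infty$ as a bounded positive stationary solution hence $V(y)$, and prove uniqueness again by sliding combined with the strong maximum principle and the exponential control at $+\infty$.

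I expect the main obstacle to be the uniqueness statements, specifically (a) proving that every bounded positive \emph{stationary} solution is $x$-independent — this is where the nonlocal competition term genuinely complicates matters, since the usual Berestycki–Hamel–Roques-type Liouville arguments assume local reaction and here the coupling through $\int K(z)v(x,z)\dd z$ must be shown not to create $x$-dependent patterns; and (b) the uniqueness of the front for each $c\ge c^*$ up to the (unavoidable) $s$-translation, which requires a careful sliding argument together with sharp control of the decay rate at $s\to+\infty$ to rule out the faster-decaying branch of the dispersion relation mixing in. The construction of sub/supersolutions for $c\ge c^*$ is technically the most calculation-heavy, but conceptually routine once the dispersion relation and the spectral properties of $L_\alpha$ are in hand; the nonexistence of fronts for $c<c^*$ reduces cleanly to the spectral obstruction plus a comparison argument using $\varphi_\alpha$ as a test function.
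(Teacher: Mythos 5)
Your proposal correctly identifies the spectral framework, the definition of $\bar\alpha$ via $\lambda_0(\bar\alpha)=1$, the dispersion relation giving $c^*=2\sqrt{1-\lambda_0}$, and the fact that among $x$-independent candidates the stationary solution must be a precise multiple of the principal eigenfunction. These points match the paper. However, the core of your proposed method --- sliding, moving planes, sub/supersolution sandwiching, monotone iteration, and ``strong maximum principle on the difference of $v$ with its shifts'' --- cannot be carried out here, and this is not a technical nuisance but the central difficulty of the problem. The nonlinearity $\bigl(1-\int_{\R^N}K(z)u(\cdot,z)\,\dd z\bigr)u$ is nonmonotone as a map on functions of $y$: decreasing $u$ at one value of the trait $z$ \emph{raises} the effective growth rate $1-\int Ku$ and hence can make a smaller competitor overtake a larger one at another trait. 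The equation therefore has no comparison principle, and none of the order-theoretic machinery you invoke applies. The paper states this explicitly as the reason the methods of \cite{BC} for the local analogue fail.

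What is missing from your outline is the idea that actually makes the uniqueness results go through: treat $b(x):=\int_{\R^N}K(z)u(x,z)\,\dd z$ as a \emph{given} positive coefficient, observe that the linearized operator in $y$ diagonalizes in the eigenbasis $\{\psi_i\}$ of $-\Delta_y+\alpha g$, and expand $u(x,y)=\sum_i v_i(x)\psi_i(y)$. Each mode then solves a scalar, linear, \emph{decoupled} ODE
\[
-c\,\partial_x v_i-\partial_{xx}v_i+\lambda_i v_i=(1-b(x))v_i,
\]
with the \emph{same} potential $b(x)$. This is where the nonlocality is defused, and from here on ordinary maximum-principle arguments for scalar ODEs apply: modes with $\lambda_i>1$ vanish outright; for the remaining finitely many modes one shows the quotients $w_i=v_i/v_0$ are bounded and satisfy a drift equation $w_i''+c w_i'+ (2v_0'/v_0)w_i'=(\lambda_i-\lambda_0)w_i$ with a sign on the zeroth-order term forcing $w_i\equiv 0$ (in the subcritical case after first establishing $\inf v_0>0$; in the supercritical case by comparing the decay exponents $\gamma_i<\gamma_0$ at $x\to+\infty$). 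Only then does $u=v_0\psi_0$ and the problem collapses to the classical Fisher--KPP scalar equation for $v_0$, where existence and uniqueness are known. Your ``nonexistence of admissible exponential decay for $c<c^*$'' heuristic does point at the right spectral dichotomy, but on its own it cannot handle a bounded solution that does not decay at $+\infty$, and that case is handled in the paper precisely by the quotient argument on $w_i$ (Lemma~\ref{lem:v0 lower bound} and Theorem~\ref{thm:liouville with c}), not by sliding. In short, your dispersion relation and spectral preliminaries are right, but the decomposition $u=\sum_i v_i(x)\psi_i(y)$ with frozen potential $b(x)$ is the indispensable step, and a sliding-based program would not close.
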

We will see in the proof that $c^*$ is explicitly given in terms of the ground state energy of the operator $-\Delta + \alpha g(y)$. 
Theorem \ref{t:intro-stationary} provides Liouville type results. The first part asserts the uniqueness of the stationary solution. The second one states that there are no non trivial traveling wave solutions of speed less than $c^*$. The third part says that there is a unique traveling wave for all speeds faster than or equal to $c^*$. The proof of Theorem \ref{t:intro-stationary} is obtained by a direct combination of Theorem \ref{thm:existence and uniqueness}, Theorem \ref{thm:liouville with c} and Theorem \ref{Liouville-stat} in the main body of the paper. 

Note that when $\alpha > \bar \alpha$ we can still make sense of the previous result by considering $c^* = +\infty$. If $\alpha < \bar \alpha$, then $c^*$ is a finite positive number.

The second main result concerns the asymptotic speed of propagation of general non negative solutions of \eqref{eq:non local interactions}. Recall that $V(y)$ is the unique positive bounded stationary solution of \eqref{eq:non local interactions} in Theorem \ref{t:intro-stationary}.

\begin{thm} \label{t:into-asp}
We assume $0<\alpha< \bar \alpha$.
Let $u$ be a solution of \eqref{eq:non local interactions} so that $u(0,x,y) = u_0(x,y)$ is smooth, nonnegative, compactly supported and $u_0\not\equiv 0$. Assume \eqref{eq:condition-1}, \eqref{eq:condition g} and also that $K$ is bounded below in a neighborhood of the origin (condition \eqref{eq:assumption-2} below). Then, there is invasion by $V(y)$ which means that at every point $(x,y)\in \R^m\times \R^N$ one has $\lim_{t\to \infty} u(t,x,y) = V(y)$. Furthermore, 
 the asymptotic speed of propagation is equal to $c^*$ in the sense that
\[
\lim_{t\to+\infty}\Big(\sup_{|x|\ge ct,\ y\in\R^N}u(t,x,y)\Big)=0\quad\mbox{ for all }c>c^*.
\]
and
\[  
\lim_{t\to+\infty}\Big(\sup_{|x|\le ct,\ y\in\R^N}|u(t,x,y)-V(y)|\Big)=0\quad\mbox{ for all }0\le c<c^*.
\]
\end{thm}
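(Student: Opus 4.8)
The plan is to deduce Theorem~\ref{t:into-asp} from the traveling wave theory of Theorem~\ref{t:intro-stationary} (equivalently, the main body results it quotes) by a standard comparison-principle argument, the only subtlety being the nonlocal competition term. First I would establish the upper bound. Since $u_0$ is compactly supported and bounded, and dropping the nonnegative nonlocal term $\int K u \,\dd z$ only increases the right-hand side, $u$ is a subsolution of the \emph{linear} equation $w_t - \Delta w + \alpha g(y) w = w$. Using the explicit principal eigenfunction/ground state of $-\Delta + \alpha g(y)$ (whose principal eigenvalue $-\mu_1$ governs $c^*$, with $c^* = 2\sqrt{1-\mu_1}$ or the analogous formula appearing in the proof) I would build, for each $c > c^*$, a supersolution of the linear problem of the form $C e^{-\lambda(|x|-ct)} \Phi(y)$ with $\Phi$ the ground state and $\lambda$ chosen in the admissible range; comparison then gives $u(t,x,y) \le C e^{-\lambda(|x|-ct)}\Phi(y) \to 0$ uniformly for $|x| \ge ct$. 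This yields the first limit. (One must be slightly careful that $\Phi$ decays, which follows from $g(y)\to\infty$, so the supersolution is bounded.)

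For the lower bound — invasion at speed $c^*$ — I would argue that for any $c < c^*$, $\liminf_{t\to\infty} u(t,x,y) \ge V(y)$ uniformly on $|x| \le ct$. The approach is to first show $u$ becomes bounded away from $0$ on compact sets after finite time. Using that $u_0 \not\equiv 0$ and the strong maximum principle for the (locally uniformly parabolic) equation, $u(t_0,\cdot,\cdot) > 0$ everywhere for some $t_0$; then near $y=0$, $\alpha g(y)$ is small, $K$ is bounded below (this is exactly where condition~\eqref{eq:assumption-2} enters), so on a fixed ball the equation is bounded above and below by KPP-type equations, and a spreading argument (or comparison with a compactly supported subsolution that grows) shows $u$ exceeds a fixed small constant $\delta$ on larger and larger balls, propagating with any speed $c < c^*$. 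Once $u \ge \delta$ on $|x| \le ct$, I would promote this to $u \to V(y)$ by comparison with the traveling waves of speed exactly $c^*$ (or speeds slightly above $c$) moving inward from all directions $e \in \mathbb{S}^{m-1}$: these waves connect $0$ to $V(y)$, and sliding them gives the lower barrier, while the upper bound just proved plus the Liouville-type uniqueness of $V$ pins down the limit.

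The main obstacle will be controlling the nonlocal term $\int_{\R^N} K(z) u(t,x,z)\,\dd z$ in the lower-bound step, since it is genuinely nonlocal in $y$ and spoils a naive pointwise comparison principle. The remedy, which I would implement, is an a priori $L^\infty$ bound on $u$ (from the upper-bound argument, $u$ is eventually bounded by a fixed constant $M$ uniformly in $(x,y)$, in fact one expects $M \le 1/\int K V$ or so from the equation) together with a decay estimate in $y$: because $g(y)\to\infty$, any bounded solution satisfies $u(t,x,y) \le C e^{-\beta|y|}$ uniformly in $t$ large and $x$, so the integral $\int K(z) u(t,x,z)\,\dd z$ is a \emph{bounded} function of $(t,x)$, and on the invasion region it converges to $\int K(z) V(z)\,\dd z$. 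This reduces the equation, on the relevant region, to a local KPP equation in $(x,y)$ with a coefficient converging to the right value, for which the classical comparison and spreading machinery applies. I would also invoke standard parabolic regularity ($L^\infty$ to $C^{2,\alpha}$ interior estimates) to pass to limits along sequences $t_n \to \infty$ and identify the limit via the Liouville theorems already proved. Finally, to go from "invasion on $|x| \le ct$ for every $c<c^*$" and "decay on $|x|\ge ct$ for every $c>c^*$" to pointwise convergence $u(t,x,y)\to V(y)$ at each fixed $(x,y)$, one just notes that any fixed $x$ lies in the region $|x| \le ct$ for $t$ large and applies the lower bound together with the global upper bound $\limsup u \le V$.
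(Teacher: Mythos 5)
Your upper-bound argument (comparison with an exponential supersolution built from the ground state of $-\Delta+\alpha g$) matches the paper's treatment of the region $|x|\geq ct$, $c>c^*$. The problem lies in the lower bound, and it is not a small one.

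You propose to prove the invasion estimate by sliding the traveling waves of Theorem~\ref{t:intro-stationary} inward, after "reducing" the equation to a local KPP one on the grounds that $\int K(z)u(t,x,z)\,\dd z$ is bounded and "on the invasion region converges to $\int K(z)V(z)\,\dd z$." This is circular: the convergence of the nonlocal term to $\int KV$ is precisely the content of the statement you are trying to prove. Without it, you only know the nonlocal term is bounded, and a bounded coefficient of the wrong sign can destroy any subsolution you try to slide. The paper takes this seriously --- it explicitly emphasizes that the nonlocal equation has \emph{no} comparison principle, so the classical ``slide the traveling wave'' machinery from \cite{BC} does not transfer --- and replaces it with a compactness and contradiction argument adapted from Hamel--Ryzhik: assuming $u(t_n,x_n,y_0)\to 0$ with $|x_n|\leq ct_n$, translate and pass to a limit, use the strong maximum principle to conclude the limit vanishes identically, then track the last time $t_n^*$ before $t_n$ at which the nonlocal term dips below a threshold $\delta$, and compare $u$ on $[t_n^*,0]$ with an \emph{explicit subsolution of the linear equation} (not a traveling wave of the nonlinear problem), reaching a contradiction. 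This compactness mechanism is the essential idea you are missing; it is what makes the lower bound go through despite the failure of comparison.

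Two smaller but real gaps: (i) the uniform-in-time $L^\infty$ bound on $u$ is not ``from the upper-bound argument'' (the linear comparison gives $u\leq C_0 e^{(1-\lambda_0)t}\psi_0(y)$, which grows in $t$); establishing a $t$-independent bound occupies Lemmas~\ref{lem:bound-1}--\ref{lem:uniform weighted bound} and the local maximum principle in the appendix, and is exactly where the hypothesis \eqref{eq:assumption-2} is used; (ii) for the sharper statement that $u\to V$ on $|x|\leq ct$, the paper does not invoke a traveling-wave squeeze but rather the spectral decomposition $u=\sum v_i(t,x)\psi_i(y)$ from Section~\ref{sec:reduction}, showing each $v_i$ with $i\geq 1$ tends to zero and $v_0\to\mu$ via a second compactness/Liouville argument. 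Your plan to identify limits via the Liouville theorem is in the right spirit, but without the decomposition you have no clean way to split off the principal mode and control the rest.
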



There is a large literature devoted to local reaction-diffusion equations. When the competition term is replaced by a local one, \eqref{eq:non local interactions}  reduces to  
\begin{equation}\label{eq:BC}
u_t - \Delta u + \alpha g(y) u = f(u).
\end{equation}
In \cite{BC}, H. Berestycki and G. Chapuisat study this  local equation. They establish the existence and characterization of traveling fronts, asymptotic speed of propagation and other related properties when $f$ is a nonlinearity of either Fisher-KPP type or bistable type.
The methods of \cite{BC} rely essentially on the maximum principle and comparison principles for parabolic equations. Therefore, they fall short for non-local equations as the one of interest here. 

 Several works address the questions of existence of traveling wave solutions and  asymptotic speed of propagation for reaction-diffusion equations with nonlocal reaction terms related to \eqref{eq:non local interactions}. Using a topological degree argument and a priori estimates, M. Alfaro, J. Coville and G. Raoul \cite{ACR} prove the existence of traveling waves for the equation \eqref{eq:non local interactions} with $K$ more general than here in that it also depends on $y$, i.e. $K=K(y,z)$, but with further restrictions on the growth of $K$. In particular, they assume that $k_1 \leq K(y,z) \leq k_2$ for all $y,z$, where $k_1, k_2$ are two positive constants. E. Bouin and V. Calvez \cite{BCalvez} (see also \cite{BCMMPRV}), constructed traveling wave solutions of equations for bounded traits with Neumann boundary conditions, where the space-diffusivity depends on the trait and the competition kernel is $K\equiv 1$. However, both of these works do not prove uniqueness results for the traveling waves and the asymptotic profile is not specified. E. Bouin and S. Mirrahimi \cite{BM} derive certain asymptotic speeds of propagation, and asymptotic behavior of either $u$ or the average of $u$ in the trait $y$, 
for equations with bounded traits and Neumann boundary conditions by a Hamilton-Jacobi approach. Very recently, we learned from O. Turanova \cite{OT} that she has generalized these results to equations of trait dependent space-diffusivity as those in \cite{BCalvez}. Lastly, N. Berestycki, C. Mouhot and G. Raoul \cite{BMR} establish a propagation law in $t^{3/2}$ for the model of \cite{BCalvez} for toads invasion. The paper \cite{BCMMPRV} provides a heuristic analysis and numerical computations for this model.

In Section \ref{sec:othermodels} we consider some variations of the model \eqref{eq:non local interactions} and extend our existence and uniqueness results. 
We analyze in particular the case in which the trait space is bounded, and also the case in which the diffusion in trait is fractional.

Another related nonlocal Fisher-KPP equation arises in ecology with a convolution term. This equation is of the form
\begin{equation}\label{eq:convolution}
 u_t-\Delta u=u(1-\phi*u)
\end{equation}
where the nonlocal competition is given by a convolution with a kernel $\phi$. The papers \cite{BNPR, Britton, G, HR, WLR} and other works mentioned therein study the steady states, traveling waves and asymptotic speeds of propagation for \eqref{eq:convolution}. \\

The paper is organized as follows. In Section \ref{sec:tr} we prove Theorem \ref{t:intro-stationary} on the existence and uniqueness of traveling fronts of \eqref{eq:non local interactions}. In Section \ref{sec:asp} we show Theorem \ref{t:into-asp} on the asymptotic speed of propagation. 

An important ingredient in Section \ref{sec:asp} is a uniform pointwise bound for the solutions. One main difference between \eqref{eq:BC} and \eqref{eq:non local interactions} (as well as \eqref{eq:convolution}) is that in general we do not have comparison principles for solutions of  \eqref{eq:non local interactions} (nor \eqref{e:traveling-wave-intro}). Thus, many arguments used for the classical Fisher-KPP equation or for \eqref{eq:BC} as in \cite{BC} in general do not apply. 

\bigskip

\noindent\textbf{Acknowledgements:} The research of H. Berestycki leading to these results has received funding from the European Research Council under the European Union's Seventh Framework Programme (FP/2007-2013) / ERC Grant Agreement n.321186 - Reaction-Diffusion Equations, Propagation and Modelling, and from the French National Research Agency (ANR), within the project NONLOCAL ANR-14-CE25-0013. Part of this work was done while H. Berestycki was visiting the Department of Mathematics at the University of Chicago and was also supported by NSF grant DMS-1065979. T. Jin was supported in part by NSF grant DMS-1362525, and he would like to thank Professor YanYan Li for his interests and constant encouragement. L. Silvestre was supported in part by NSF grants DMS-1254332 and DMS-1065979.

\section{Existence and uniqueness of traveling fronts}\label{sec:tr}

In this section, we will study existence and uniqueness of planar traveling fronts of \eqref{eq:non local interactions}, which are solutions of \eqref{e:traveling-wave-intro}. This is actually equivalent to the case when the spatial dimension $m=1$. Let us abuse the notations a little: we replace the variable $s$ by $x$ in the equation \eqref{e:traveling-wave-intro}. Therefore, in this section, $x\in\R$ (not $\R^m$), and we study solutions $u(x,y):\R\times \R^{N}\to\R$ of
\begin{equation} \label{e:traveling-wave}  
-c u_x(x,y) - \Delta u(x,y) + \alpha g(y) u(x,y) = \left(1 - \int_{\R^N} u(x,z)K(z)  \dd z \right) u(x,y)
\end{equation}
such that 
\begin{equation} \label{e:traveling-wave-infinity} 
\lim_{x\to+\infty}u(x,\cdot)\equiv 0 \quad\mbox{and}\quad \liminf_{x\to-\infty}u(x,\cdot)>0.
\end{equation}
One important observation is that when the solution $u$ of \eqref{e:traveling-wave} has the special structure $u(x,y) = v(x) \psi(y)$, where $\psi$ is an eigenfunction of the left hand side of the equation, then the function $v$ satisfies a classical KPP-Fisher reaction diffusion equation in $x$. The main difficulty in this section is to show that all traveling wave solutions $u$ must have this separated variables structure.

\subsection{A spectral lemma and asymptotic profiles}
To start with, for $g$ satisfying \eqref{eq:condition g}, we define the Hilbert space
\[
\mathcal H(\R^N)= \{v\in H^1(\R^N): \sqrt{g} v\in L^2(\R^N)\},
\]
with its associated inner product
\[
\langle u,v\rangle=\int_{\R^N} \nabla u \nabla v+g u v\dd y.
\]
We denote its norm as
\[
\|v\|_{\mathcal H(\R^N)}=\left(\int_{\R^N} |\nabla v|^2+g v^2\dd y\right)^{\frac 12}.
\]
Since $g$ is bounded from below by a positive constant in the complement 
$B^c$ of the unit ball in $\R^N$, it is easily seen that 
$\mathcal H(\R^N) \hookrightarrow  H^1(\R^N)$ with a continuous injection. 

The following lemma is elementary. We include its proof here for completeness.

\begin{lemma}
The embedding
$
\mathcal H(\R^N)\hookrightarrow L^2(\R^N)$ is compact.
\end{lemma}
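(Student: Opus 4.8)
The plan is to exploit the coercivity of $g$ at infinity (the condition $\lim_{|y|\to+\infty}g(y)=+\infty$ in \eqref{eq:condition g}): it forces uniform smallness of the $L^2$ mass near infinity, and on bounded balls we invoke the Rellich--Kondrachov theorem.

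Concretely, let $(v_k)$ be a bounded sequence in $\mathcal H(\R^N)$, with $\|v_k\|_{\mathcal H(\R^N)}\le C$ for all $k$. As already noted, $\mathcal H(\R^N)\hookrightarrow H^1(\R^N)$ with continuous injection, so $(v_k)$ is bounded in $H^1(\R^N)$; passing to a subsequence, $v_k\rightharpoonup v$ weakly in $H^1(\R^N)$, and by the compactness of $H^1(B_R)\hookrightarrow L^2(B_R)$ on each ball $B_R$ together with a diagonal argument, $v_k\to v$ strongly in $L^2(B_R)$ for every $R>0$.

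The decisive estimate is the tail bound. Put $m_R:=\inf_{|y|\ge R}g(y)$; by \eqref{eq:condition g}, $m_R\to+\infty$ as $R\to+\infty$. Then for every $k$,
\[
\int_{\{|y|\ge R\}} v_k^2\dd y\le \frac{1}{m_R}\int_{\{|y|\ge R\}} g\, v_k^2\dd y\le \frac{C^2}{m_R},
\]
and, since $v_k\to v$ in $L^2_{\mathrm{loc}}$ (hence, along a further subsequence, a.e.), Fatou's lemma gives the same bound $\int_{\{|y|\ge R\}}v^2\dd y\le C^2/m_R$. Fix $\eps>0$ and choose $R$ with $4C^2/m_R<\eps$; then
\[
\int_{\R^N}(v_k-v)^2\dd y\le \int_{B_R}(v_k-v)^2\dd y+2\int_{\{|y|\ge R\}}(v_k^2+v^2)\dd y\le \int_{B_R}(v_k-v)^2\dd y+\eps.
\]
Letting $k\to\infty$ and using $v_k\to v$ in $L^2(B_R)$ yields $\limsup_k\|v_k-v\|_{L^2(\R^N)}^2\le\eps$, and since $\eps$ is arbitrary, $v_k\to v$ in $L^2(\R^N)$, which proves the claimed compactness.

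The only point requiring any care is combining the local convergence with the uniform tail control --- splitting $\R^N$ into $B_R$ and its complement at a radius dictated by $\eps$ --- but the divergence $g(y)\to+\infty$ makes this step immediate, so there is no genuine obstacle here.
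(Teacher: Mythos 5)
Your argument is correct and follows essentially the same strategy as the paper's: coercivity of $g$ at infinity gives a uniform $L^2$ tail bound $\int_{|y|\ge R}v_k^2\le C^2/m_R$, and Rellich--Kondrachov on balls handles the local part. The only cosmetic difference is that you identify the limit $v$ via weak $H^1$ compactness and show strong convergence to it, whereas the paper runs a diagonal argument to produce an $L^2$-Cauchy subsequence directly; both are standard and equivalent.
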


\begin{proof} Let $\{v_n\}$ be a bounded sequence in $\mathcal H(\R^N)$. By the assumption \eqref{eq:condition g}, $\forall~\eps>0$, there exists $R_{\eps}>0$ such that
\[
\|v_n\|_{L^2(B^c_{R_\eps})}<\eps\quad\mbox{for all }n.
\]
Using the Rellich-Kondrachov theorem, there exists a subsequence $\{v_{k_n}\}$ such that
\[
\limsup_{n,m\to\infty}\|v_{k_n}-v_{k_m}\|_{L^2(B_{R_\eps})}=0.
\]
It follows that
\[
\limsup_{n,m\to\infty}\|v_{k_n}-v_{k_m}\|_{L^2(\R^N)}<\eps.
\]
Finally, we can use a standard diagonal argument to extract a subsequence $\{v_{k_n}\}$ satisfying
\[
\limsup_{n,m\to\infty}\|v_{k_n}-v_{k_m}\|_{L^2(\R^N)}=0.
\]
This finishes the proof.
\end{proof}

Let $\mathcal L$ be the linear operator:
\[ \mathcal Lf := -\Delta_y f + \alpha g f,\]
where $\Delta_y$ denotes the Laplacian operator in the variable $y$ only.
\begin{lemma} \label{l:spectral}
The spectrum of $\mathcal L$ consists only of eigenvalues. All its eigenvalues are positive and we can write them in a monotone increasing sequence $\{0 < \lambda_0 < \lambda_1 \leq \lambda_2 \leq \dots\}$ so that $\lambda_i \to \infty$ as $i \to \infty$. The first eigenvalue $\lambda_0$ is simple and corresponds to a positive eigenfunction $\psi_0$. All the other eigenfunctions $\psi_i$ change signs. The eigenfunctions $\{\psi_i\}$ form an orthonormal  basis of $L^2(\R^N)$.
\end{lemma}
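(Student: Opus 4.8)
The statement is the standard spectral theory for a Schr\"odinger operator with confining potential $\alpha g(y)$, so the plan is to reduce everything to the compactness lemma just established together with the classical theory of compact self-adjoint operators and of ground states. First I would check that $\mathcal L$, defined as a form on $\mathcal H(\R^N)$, is self-adjoint and coercive: the associated bilinear form is $\int_{\R^N} \nabla f \cdot \nabla h + \alpha g f h \dd y = \alpha \langle f,h\rangle$, which is bounded and coercive on $\mathcal H(\R^N)$, and strictly positive since $\alpha g \ge 0$ and $g$ is bounded below away from the origin (so $\mathcal L f = 0$ with the Poincar\'e-type control coming from $\mathcal H \hookrightarrow H^1$ forces $f=0$). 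Hence by Lax--Milgram the solution operator $T := \mathcal L^{-1} : L^2(\R^N) \to L^2(\R^N)$ is well defined, bounded, self-adjoint and positive. Composing $T$ with the compact embedding $\mathcal H(\R^N) \hookrightarrow L^2(\R^N)$ from the previous lemma shows $T$ is a compact operator on $L^2(\R^N)$.

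Next I would invoke the spectral theorem for compact self-adjoint operators: $T$ has a sequence of positive eigenvalues $\mu_0 \ge \mu_1 \ge \dots \to 0$ with eigenfunctions forming an orthonormal basis of $L^2(\R^N)$ (the kernel of $T$ is trivial by positivity, so there is no zero eigenvalue to worry about). Setting $\lambda_i = 1/\mu_i$ gives the increasing sequence $0 < \lambda_0 \le \lambda_1 \le \dots \to \infty$ of eigenvalues of $\mathcal L$ with the same eigenfunctions $\psi_i$; elliptic regularity bootstraps the $\psi_i$ to be classical (H\"older continuity of $g$ suffices for $C^{2,\gamma}_{loc}$), and positivity $\lambda_0 > 0$ is exactly the coercivity above. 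The remaining claims concern only the ground state: simplicity of $\lambda_0$, positivity of $\psi_0$, and sign-changes of all other $\psi_i$. For these I would use the Rayleigh-quotient characterization $\lambda_0 = \inf\{ \int |\nabla f|^2 + \alpha g f^2 : \|f\|_{L^2}=1\}$ and the standard argument that $|f|$ is also a minimizer, hence (being nonnegative and solving $\mathcal L |f| = \lambda_0 |f|$) is, by the strong maximum principle / Harnack inequality, strictly positive; consequently a genuine eigenfunction $\psi_0$ for $\lambda_0$ can be taken positive, any two such would have to be proportional (two positive $L^2$ functions cannot be orthogonal), giving simplicity, and every higher $\psi_i$ is $L^2$-orthogonal to the positive $\psi_0$ and therefore must change sign. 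Finally $\lambda_0 < \lambda_1$ is the strict inequality in "$\le$" forced by simplicity.

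The only mild subtlety — and the step I would treat most carefully — is justifying the confinement estimate $\|v_n\|_{L^2(B_{R_\eps}^c)} < \eps$ in spirit for the operator itself, i.e. making sure the form domain $\mathcal H(\R^N)$ is exactly the right space and that no spectrum is lost "at infinity"; this is precisely what the compactness lemma delivers, so there is no essential spectrum and the resolvent is compact. Everything else is textbook: Lax--Milgram for invertibility, the spectral theorem for compact self-adjoint operators for the eigenbasis, interior elliptic estimates for smoothness, and the strong maximum principle plus the variational characterization for the ground-state properties. I do not anticipate a genuine obstacle; the work is in assembling these ingredients cleanly.
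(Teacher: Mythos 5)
Your proposal follows the paper's route exactly: construct the resolvent $\mathcal L^{-1}$ by Riesz/Lax--Milgram, use the compactness lemma to conclude it is a compact self-adjoint operator on $L^2(\R^N)$, and invoke the spectral theorem for compact self-adjoint operators. The ground-state material (positivity and simplicity of $\psi_0$ via the Rayleigh quotient, replacing a minimizer by its absolute value, and the strong maximum principle; sign changes from $L^2$-orthogonality to $\psi_0>0$) is treated as standard in the paper's proof, so your filling it in is appropriate rather than a different route. One minor slip: the bilinear form $\int_{\R^N}\nabla f\cdot\nabla h+\alpha g f h\dd y$ is not $\alpha\langle f,h\rangle$ with the paper's inner product unless $\alpha=1$; it is, however, equivalent to $\langle\cdot,\cdot\rangle$ with constants $\min(1,\alpha)$ and $\max(1,\alpha)$, which is all that Lax--Milgram and the coercivity argument actually need.
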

\begin{proof}
It follows from Riesz representation theorem that for each $f\in L^2(\R^N)$, there exists a unique function $u\in\mathcal H(\R^N)$ solving
\begin{equation} \label{e:L} 
\mathcal Lu=f
\end{equation}
in the sense that
\[
\int_{\R^N}\nabla u\nabla v+guv=\int_{\R^N}f v\quad\mbox{for all }v\in\mathcal H(\R^N).
\]
We write
\[
u=\mathcal L^{-1} f.
\]
Then $\mathcal L^{-1}$ is the operator which maps the right hand side $f$ to the solution $u$ in \eqref{e:L}. This operator is naturally bounded from $L^2(\R^N)$ to $\mathcal{H}(\R^N)$.

Since the embedding
\[
\mathcal H(\R^N)\hookrightarrow L^2(\R^N)\quad\mbox{is compact},
\]
we have that
\[
\mathcal L^{-1}: L^2(\R^N)\to  L^2(\R^N)\quad\mbox{is compact}.
\]
Then the conclusion follows immediately from the standard spectral theorem for compact symmetric operators on Hilbert spaces. Since the eigenfunctions are mutually orthogonal and $\psi_0>0$, all the other eigenfunctions will change signs.
\end{proof}

Since the first eigenvalue of $\mathcal L$ has monotonic and continuous dependence on $\alpha$, we have
\begin{lemma}\label{lem:choice of alpha}
There exists some $\bar\alpha$ such that $\lambda_0(\alpha)<1$ when $\alpha<\bar\alpha$, $\lambda_0(\bar\alpha)=1$ and $\lambda_0(\alpha)>1$ when $\alpha>\bar\alpha$.
\end{lemma}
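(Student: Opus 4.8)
The plan is to reduce everything to the variational characterization of $\lambda_0(\alpha)$ furnished by Lemma \ref{l:spectral} and the Courant--Fischer principle, namely
\[
\lambda_0(\alpha)=\inf_{v\in\mathcal H(\R^N)\setminus\{0\}}\frac{\int_{\R^N}|\nabla v|^2+\alpha g v^2\dd y}{\int_{\R^N}v^2\dd y},
\]
the infimum being attained at (multiples of) the positive ground state $\psi_0^\alpha$. I would then establish four properties of the function $\alpha\mapsto\lambda_0(\alpha)$ on $(0,+\infty)$: it is (i) nondecreasing and continuous, (ii) strictly increasing, (iii) satisfies $\lambda_0(\alpha)\to0$ as $\alpha\to0^+$, and (iv) satisfies $\lambda_0(\alpha)\to+\infty$ as $\alpha\to+\infty$. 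Granting these, the intermediate value theorem applied at height $1$ produces a point $\bar\alpha$ with $\lambda_0(\bar\alpha)=1$, and strict monotonicity gives both the trichotomy $\lambda_0(\alpha)\lessgtr 1$ for $\alpha\lessgtr\bar\alpha$ and the uniqueness of $\bar\alpha$.

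For (i): since $g\ge0$, for each fixed $v$ the map $\alpha\mapsto\int|\nabla v|^2+\alpha\int g v^2$ is affine and nondecreasing, so $\lambda_0$, being an infimum of such maps, is nondecreasing and concave, hence continuous on the open half-line. For (ii): given $\alpha_1<\alpha_2$, test the Rayleigh quotient at level $\alpha_1$ against the normalized positive ground state $\psi=\psi_0^{\alpha_2}$ to get $\lambda_0(\alpha_1)\le\lambda_0(\alpha_2)-(\alpha_2-\alpha_1)\int g\psi^2<\lambda_0(\alpha_2)$, the strict inequality because $\psi>0$ everywhere and, by \eqref{eq:condition g}, $g>0$ a.e.\ on $\R^N$, so $\int g\psi^2>0$.

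For (iii): fix a nonzero $\phi\in C_c^\infty(\R^N)$ and set $\phi_R(y)=\phi(y/R)$, which lies in $\mathcal H(\R^N)$ because $g$ grows at most exponentially. Using $\phi_R$ as a test function, $\lambda_0(\alpha)\le R^{-2}\|\nabla\phi\|_{L^2}^2/\|\phi\|_{L^2}^2+\alpha\,\|\sqrt g\,\phi_R\|_{L^2}^2/\|\phi_R\|_{L^2}^2$; letting $\alpha\to0^+$ with $R$ fixed and then $R\to\infty$ gives $\limsup_{\alpha\to0^+}\lambda_0(\alpha)\le0$, which combined with $\lambda_0(\alpha)>0$ from Lemma \ref{l:spectral} yields $\lambda_0(\alpha)\to0$. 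Property (iv) is the main obstacle and is where the structure of $\mathcal H(\R^N)$ really enters: suppose $\lambda_0(\alpha_n)\le M$ along some $\alpha_n\to+\infty$, and let $\psi_n=\psi_0^{\alpha_n}$ with $\|\psi_n\|_{L^2}=1$. Then $\int|\nabla\psi_n|^2+\alpha_n\int g\psi_n^2=\lambda_0(\alpha_n)\le M$, so $\int g\psi_n^2\le M/\alpha_n\to0$ and $\{\psi_n\}$ is bounded in $\mathcal H(\R^N)$. By the compact embedding $\mathcal H(\R^N)\hookrightarrow L^2(\R^N)$, a subsequence converges in $L^2$ and a.e.\ to some $\psi$ with $\|\psi\|_{L^2}=1$; Fatou's lemma gives $\int g\psi^2\le\liminf_n\int g\psi_n^2=0$, so $\psi=0$ a.e.\ on $\{g>0\}=\R^N\setminus\{0\}$, i.e.\ $\psi=0$ a.e., contradicting $\|\psi\|_{L^2}=1$. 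Hence $\lambda_0(\alpha)\to+\infty$, completing the proof.
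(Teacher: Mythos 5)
Your proposal is correct and complete. The paper itself gives no proof of this lemma, deferring to Proposition 1 and Corollary 2 of \cite{BC}; your argument supplies precisely the standard variational details behind the phrase ``monotonic and continuous dependence on $\alpha$'' that the paper invokes just before the lemma. Two minor remarks: in step (iii) the exponential growth bound on $g$ is not actually needed to put $\phi_R$ in $\mathcal H(\R^N)$ — compact support plus local boundedness of $g$ suffices; and your step (iv), which is the only place the compactness of $\mathcal H(\R^N)\hookrightarrow L^2(\R^N)$ enters, is exactly where the hypothesis $g(y)\to+\infty$ as $|y|\to\infty$ in \eqref{eq:condition g} is doing the real work.
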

\begin{proof}
See Proposition 1 and Corollary 2 in \cite{BC} for the detailed proof.
\end{proof}

Also, we have the following estimates for the first eigenfunction.
 \begin{prop}\label{prop:eigenfunction exp in y}
For every $\gamma>0$ there exists $C>0$ such that
\[
0\le \psi_0(y)\le C e^{-\gamma |y|}\quad\mbox{and}\quad |\nabla \psi_0(y)|\le C e^{-\gamma |y|}.
\]
\end{prop}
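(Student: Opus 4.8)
The plan is to use that $\psi_0$ solves the linear equation
\[
-\Delta \psi_0 + \big(\alpha g(y) - \lambda_0\big)\psi_0 = 0 \qquad \text{in } \R^N ,
\]
whose zeroth-order coefficient $\alpha g(y)-\lambda_0$ tends to $+\infty$ as $|y|\to\infty$ by \eqref{eq:condition g}. The exponential decay of $\psi_0$ will then come from a barrier argument, and the decay of $\nabla\psi_0$ from interior elliptic estimates once the pointwise bound is in hand. As a preliminary I would note that $\psi_0\in L^\infty(\R^N)$: since $\psi_0\in\mathcal H(\R^N)\subset H^1(\R^N)$ and, using $g\ge0$ and $\psi_0\ge0$, $-\Delta\psi_0=(\lambda_0-\alpha g)\psi_0\le\lambda_0\psi_0$ weakly, the De Giorgi--Nash--Moser local bound for subsolutions gives $\|\psi_0\|_{L^\infty(B_1(y))}\le C\|\psi_0\|_{L^2(B_2(y))}$ with $C$ independent of $y$, hence $\psi_0\in L^\infty(\R^N)$; recall also that $\psi_0>0$ by Lemma \ref{l:spectral} and that $\psi_0$ is continuous by $W^{2,p}_{\mathrm{loc}}$ estimates and Sobolev embedding.

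Next, fix $\gamma>0$ and set $w_\gamma(y)=e^{-\gamma\sqrt{1+|y|^2}}$, which is smooth, positive, bounded below on every ball, and satisfies $w_\gamma(y)\le e^{-\gamma|y|}$. A direct computation (using $|\nabla\sqrt{1+|y|^2}|\le1$ and $\Delta\sqrt{1+|y|^2}\ge0$) gives $\Delta w_\gamma\le\gamma^2 w_\gamma$, so
\[
-\Delta w_\gamma + \big(\alpha g(y)-\lambda_0\big)w_\gamma \ \ge\ \big(\alpha g(y)-\lambda_0-\gamma^2\big)w_\gamma .
\]
Since $g(y)\to+\infty$, there is $R_\gamma>0$ with $\alpha g(y)\ge\lambda_0+\gamma^2+1$ for $|y|\ge R_\gamma$, so $w_\gamma$ is a supersolution of $-\Delta u+(\alpha g-\lambda_0)u=0$ on the exterior domain $\Omega_\gamma:=\R^N\setminus\overline{B_{R_\gamma}}$, where in addition $\alpha g-\lambda_0\ge1$. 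Choosing $A_\gamma>0$ so large that $A_\gamma w_\gamma\ge\psi_0$ on $\overline{B_{R_\gamma}}$ (possible since $\psi_0$ is bounded and $w_\gamma$ is bounded below there), I would show $\psi_0\le A_\gamma w_\gamma$ on $\Omega_\gamma$ by a cutoff/energy argument: with $h:=\psi_0-A_\gamma w_\gamma$, a weak subsolution on $\Omega_\gamma$ satisfying $h\le0$ on $\partial\Omega_\gamma$, and a cutoff $\eta_R$ at scale $R\to\infty$, one tests the inequality for $h$ against $\eta_R^2 h^+$, absorbs the $\nabla\eta_R$ error using $\psi_0\in L^2(\R^N)$, and uses $\alpha g-\lambda_0\ge1$ on $\Omega_\gamma$, to get $\int_{\Omega_\gamma}|\nabla h^+|^2+(h^+)^2\le0$, hence $h^+\equiv0$. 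Thus $\psi_0\le A_\gamma w_\gamma\le A_\gamma e^{-\gamma|y|}$ on $\Omega_\gamma$, and so on all of $\R^N$ after enlarging $A_\gamma$.

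For the gradient bound, given $\gamma>0$ I would apply the estimate just proved with exponent $\gamma+\kappa$, getting $\psi_0(y)\le C e^{-(\gamma+\kappa)|y|}$. On a unit ball $B_1(y_0)$ the function $\psi_0$ solves $-\Delta\psi_0=f$ with $f=(\lambda_0-\alpha g)\psi_0$, and, using $0\le g(y)\le\kappa e^{\kappa|y|}$,
\[
\|f\|_{L^p(B_1(y_0))}\ \le\ \big(\lambda_0+\alpha\kappa e^{\kappa(|y_0|+1)}\big)\,\|\psi_0\|_{L^p(B_1(y_0))}\ \le\ C\,e^{-\gamma|y_0|} .
\]
Interior $W^{2,p}$ estimates with $p>N$, together with the embedding $W^{2,p}(B_{1/2}(y_0))\hookrightarrow C^1(\overline{B_{1/2}(y_0)})$, then give
\[
|\nabla\psi_0(y_0)|\ \le\ C\big(\|\psi_0\|_{L^p(B_1(y_0))}+\|f\|_{L^p(B_1(y_0))}\big)\ \le\ C'\,e^{-\gamma|y_0|} ,
\]
which is the asserted bound on $\nabla\psi_0$.

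The one genuinely delicate point is the comparison on the unbounded domain $\Omega_\gamma$: the maximum principle there is not automatic, which is why I first establish that $\psi_0$ is bounded (so that $A_\gamma$ can be chosen) and run the argument through a cutoff, controlling the tail with $\psi_0\in L^2(\R^N)$, and why $R_\gamma$ is chosen large enough that $\alpha g-\lambda_0$ is uniformly positive on $\Omega_\gamma$. Everything else---the barrier computation, the De Giorgi--Nash--Moser local bound, and the $W^{2,p}$ bootstrap for the gradient---is routine. (Alternatively, the decay, in fact the super-exponential decay, of the ground state could be obtained from Agmon-type weighted energy estimates, but the elementary barrier argument above is enough for the stated result.)
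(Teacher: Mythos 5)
Your proposal is correct, and it takes a genuinely different route from the paper's. The paper simply cites Lemma~2.2 of Berestycki--Rossi \cite{BR} for the exponential decay of $\psi_0$, and then for $\nabla\psi_0$ invokes a pointwise gradient estimate for the Laplacian of the form $|\nabla\psi_0(y)|\le 2\sup_{|z-y|=1}|\psi_0(z)|+\sup_{|z-y|\le 1}(\alpha g(z)+\lambda_0)|\psi_0(z)|$, from which the decay follows (with the same trick you use of running the zeroth-order bound at a faster rate to absorb the $e^{\kappa|y|}$ growth of $g$). You instead give a fully self-contained argument: a De~Giorgi--Nash--Moser step to get $\psi_0\in L^\infty$ uniformly, the smooth barrier $w_\gamma=e^{-\gamma\sqrt{1+|y|^2}}$, a Caccioppoli-type cutoff-energy comparison on the unbounded exterior domain (using $\psi_0\in L^2(\R^N)$ to kill the $|\nabla\eta_R|$ tail and the lower bound $\alpha g-\lambda_0\ge1$ there), and then interior $W^{2,p}$ estimates plus Sobolev embedding for the gradient. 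This buys you a proof that does not rely on an external lemma, and the cutoff-energy comparison is a clean substitute for the paper's device (used in the very similar Proposition~\ref{prop:exp in y}) of adding an $\eps e^{\gamma|y|}$ term to the barrier and then letting $\eps\to0$; both handle the lack of a boundary condition at infinity. The only genuinely delicate point, which you correctly flag and handle, is exactly this justification of the comparison on the unbounded domain. Everything checks out.
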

\begin{proof}
The function $\psi_0$ satisfies the equation
\[
\mathcal L\psi_0=\lambda_0 \psi_0.
\]
Then the bound for $\psi_0$ follows from Lemma 2.2 in  \cite{BR}. Consequently, it follows from the gradient estimates for the Laplace operator that
\[
|\nabla \psi_0(y)|\le 2\sup_{|z-y|=1}|\psi_0(z)|+\sup_{|z-y|\le 1}(\alpha g(z)+\lambda_0)|\psi_0(z)|.
\]
Therefore, the bound for $|\nabla \psi_0|$ follows from the bound for $\psi_0$.
\end{proof}

Moreover,  we have
\begin{prop}\label{prop:exp in y}
Let $u$ be a nonnegative bounded solution of \eqref{e:traveling-wave} such that $u\not\equiv 0$. Then for every $\gamma>0$ there exists $C>0$ such that for all $(x,y)\in \R\times\R^N$,
\[
0< u(x,y)\le C e^{-\gamma |y|}\quad\mbox{and}\quad |\nabla u(x,y)|\le C e^{-\gamma |y|}.
\]
\end{prop}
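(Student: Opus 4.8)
The plan is to prove the three assertions in the natural order: first the Gaussian-type decay in the trait variable $y$, then strict positivity, and finally the gradient bound. The engine for the decay is the observation that, since $u\ge 0$ and $K\ge 0$, the zeroth-order term on the right side satisfies $1-\alpha g(y)-\int_{\R^N}K(z)u(x,z)\,\dd z\le 1-\alpha g(y)$ pointwise. Writing $\Delta=\partial_{xx}+\Delta_y$ for the (uniformly elliptic, constant-coefficient) Laplacian on $\R\times\R^N$, it follows that any bounded nonnegative solution $u$ of \eqref{e:traveling-wave} is a subsolution of the \emph{linear} inequality
\[
-c u_x-\Delta u+\big(\alpha g(y)-1\big)u\le 0\qquad\text{on }\R\times\R^N,
\]
and the potential $\alpha g(y)-1$ tends to $+\infty$ as $|y|\to\infty$ by \eqref{eq:condition g}. (Here I take $u$ to be a classical solution, which follows from standard elliptic bootstrap; and being a solution, $\int_{\R^N}K(z)u(x,z)\,\dd z$ is finite for each $x$, so the inequality above makes sense.) The hard part is extracting the sharp decay from this inequality on a domain that is unbounded in every direction.

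\textbf{Decay in $y$.} Fix $\gamma>0$ and choose $R_0=R_0(\gamma)\ge 1$ so large that $\alpha g(y)-1\ge\gamma^2+1$ for $|y|\ge R_0$; work on $\Omega=\{(x,y):|y|>R_0\}$. With $M=\norm{u}_{L^\infty}$ and small fixed $\gamma',\beta>0$, for $\eps>0$ I would use the barrier
\[
\Phi_\eps(x,y)=M e^{-\gamma(|y|-R_0)}+\eps\Big(e^{\gamma'(|y|-R_0)}+\cosh(\beta x)\Big).
\]
A direct computation with the Laplacian of radial exponentials and of $\cosh$ shows that, provided $\gamma'$ is small depending on $\gamma,N$ and $\beta$ is small depending on $c$, each of the three summands is a supersolution of $-c\partial_x-\Delta+(\alpha g(y)-1)$ on $\Omega$; moreover $\Phi_\eps\ge M\ge u$ on $\partial\Omega=\{|y|=R_0\}$, and $\Phi_\eps(x,y)\to+\infty$ as $|(x,y)|\to\infty$ within $\Omega$. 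Since $w:=u-\Phi_\eps$ is bounded above and tends to $-\infty$ at infinity in $\Omega$, its supremum over $\overline\Omega$ is attained; it cannot exceed $0$ on $\partial\Omega$, and it cannot be attained at a \emph{positive} value at an interior point $p$, because there $w_x(p)=0$, $-\Delta w(p)\ge0$ and $(\alpha g(y_p)-1)w(p)>0$, contradicting $-cw_x-\Delta w+(\alpha g-1)w\le 0$ at $p$. Hence $u\le\Phi_\eps$ on $\Omega$; letting $\eps\to0$ gives $u(x,y)\le Me^{-\gamma(|y|-R_0)}$ for $|y|>R_0$, and together with $u\le M$ for $|y|\le R_0$ this yields $0\le u(x,y)\le Ce^{-\gamma|y|}$ with $C=Me^{\gamma R_0}$.

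\textbf{Positivity and the gradient bound.} Applying the decay step with any $\gamma>\kappa+N$ and using $K(z)\le\kappa e^{\kappa|z|}$ from \eqref{eq:condition-1}, one gets $\int_{\R^N}K(z)u(x,z)\,\dd z\le C'$ \emph{uniformly in $x$}. Then $b(x,y):=\alpha g(y)-1+\int_{\R^N}K(z)u(x,z)\,\dd z$ is locally bounded and $u\ge0$ solves the linear equation $-cu_x-\Delta u+b(x,y)u=0$; by the strong maximum principle (equivalently, the Harnack inequality), since $u\not\equiv0$ we conclude $u>0$ everywhere. For the gradient estimate, fix the target $\gamma>0$, apply the $y$-decay with exponent $\gamma+\kappa$, and note that on any unit ball $B_1(x_0,y_0)$ the right-hand side $f:=\big(1-\alpha g(y)-\int_{\R^N}K(z)u(x,z)\,\dd z\big)u$ of $-cu_x-\Delta u=f$ obeys $\abs{f}\le\big(1+C'+\alpha\kappa e^{\kappa(|y_0|+1)}\big)\,Ce^{-(\gamma+\kappa)(|y_0|-1)}\le\tilde Ce^{-\gamma|y_0|}$ when $|y_0|\ge1$, and is bounded when $|y_0|\le1$. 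Interior elliptic estimates for the constant-coefficient operator $-c\partial_x-\Delta$ give $\abs{\nabla u(x_0,y_0)}\le C\big(\norm{u}_{L^\infty(B_1(x_0,y_0))}+\norm{f}_{L^\infty(B_1(x_0,y_0))}\big)\le\hat Ce^{-\gamma|y_0|}$, which is the claimed bound (with the constant absorbing the small-$|y_0|$ range).

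\textbf{Main obstacle.} The only genuinely delicate point is the maximum principle on $\Omega=\{|y|>R_0\}$, which is unbounded both in the $x$-direction and as $|y|\to\infty$: the bare exponential barrier $Me^{-\gamma(|y|-R_0)}$ has the correct decay, but a bounded subsolution need not lie below it without a growth condition at infinity. The resolution I would use exploits that the potential $\alpha g(y)-1$ is large and positive throughout $\Omega$: adding the auxiliary terms $\eps e^{\gamma'(|y|-R_0)}$ and $\eps\cosh(\beta x)$ keeps the comparison function a supersolution while forcing $u-\Phi_\eps\to-\infty$ in every direction, so the supremum is attained and the classical interior argument applies; one then removes the auxiliary terms by sending $\eps\to0$. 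Everything else is routine: the supersolution check is a one-line computation requiring only $R_0$ large and $\gamma',\beta$ small, and positivity and the gradient bound are standard consequences once the uniform-in-$x$ bound on $\int K u$ is in hand.
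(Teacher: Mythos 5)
Your proof is correct and follows essentially the same route as the paper: a barrier argument on $\Omega=\{|y|>R_0\}$ with an auxiliary term that blows up at infinity (you use $\eps\bigl(e^{\gamma'(|y|-R_0)}+\cosh(\beta x)\bigr)$ where the paper uses the product form $C(1+\eps x\arctan x)(e^{-\gamma|y|}+\eps e^{\gamma|y|})$), exploiting the large potential $\alpha g(y)-1$ to keep the barrier a supersolution, then letting $\eps\to0$; positivity via the strong maximum principle and the gradient bound via interior elliptic estimates are likewise the paper's arguments.
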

\begin{proof}
Let $\varphi_0(y)=e^{-\gamma |y|}+\eps e^{\gamma |y|}$ for some $\eps\in (0,1)$. Then for $|y|\ge r_0:=(N-1)\gamma^{-1}$, we have
\[
 \Delta_y \varphi_0(y)=(\gamma^2-(N-1)\gamma|y|^{-1}) e^{-\gamma |y|}+\eps(\gamma^2+(N-1)\gamma|y|^{-1}) e^{\gamma |y|}\le2\gamma^2\varphi_0(y).
\]
Let $w(x,y)=C(1+\eps x \arctan x)(e^{-\gamma |y|}+\eps e^{\gamma |y|})$ for some $C>0$. Then
\begin{equation}\label{eq:aux func exp}
\begin{split}
&-cw_x-\Delta w+\alpha g w-w\\
&=C\Big(\eps\big(-c(\arctan x+\frac{x}{1+x^2})-\frac{2}{(1+x^2)^2}\big)\\
&\quad\quad +(\alpha g(y)-\varphi_0(y)^{-1}\Delta_y\varphi_0(y)-1)(1+\eps x \arctan x)\Big)(e^{-\gamma |y|}+\eps e^{\gamma |y|})\\
&\ge C\left(-3|c|-2+\alpha g(y)-2\gamma^2-1\right)(e^{-\gamma |y|}+\eps e^{\gamma |y|})\\
&\ge 0
\end{split}
\end{equation}
for $(x,y)\in \Omega:=\R\times \{y: |y|\ge R_0\}$, where $R_0\ge r_0$ is chosen such that $g(y)\ge\alpha^{-1}(2\gamma^2+3+3|c|)$ for all $|y|\ge R_0$.  It follows from \eqref{e:traveling-wave} that
\[
-c u_x - \Delta u + \alpha g u \le u\quad\mbox{in }\Omega.
\]
Since $u$ is a bounded function, we can choose $C$ large so that  $Ce^{-\gamma R_0}\ge \sup_{x,y} u(x,y)$. Meanwhile, $\alpha g>3$ in $\Omega$. From the maximum principle we infer that
\[
u\le w\quad\mbox{in }\Omega.
\]
By sending $\eps\to 0$, we have
\[
0\le u(x,y)\le C e^{-\gamma |y|}\quad\mbox{in }\R\times\R^N.
\]
Consequently, by the gradient estimates we have
\[
|\nabla u(x,y)|\le C e^{-\gamma |y|}\quad\mbox{in }\R\times\R^N.
\]
Meanwhile, it follows from strong maximum principle that $u>0$ in $\R\times\R^N$.
\end{proof}

Let us consider the steady states of \eqref{e:traveling-wave}, i.e., the nonnegative bounded solutions of
\begin{equation}\label{eq:steady}
  -\Delta_y V(y)+\alpha g(y)V(y)= \left(1-\int_{\R^N} V(z)K(z)\dd z\right)V(y).
\end{equation}
 From Proposition \ref{prop:exp in y} and Lemma \ref{l:spectral} we see that $1-\int_{\R^N} V(y)K(y)\dd y$ is the first eigenvalue of $\mathcal L$ and $V\in\mathcal H(\R^N)$ is an eigenfunction.   Thus, when $\alpha\ge\bar\alpha$ for $\bar\alpha$ be the one in Lemma \ref{lem:choice of alpha}, every nonnegative bounded solution of \eqref{eq:steady} has to be identically zero. When $\alpha\in (0,\bar\alpha)$ and $V\not\equiv 0$, then
\begin{equation}\label{eq:steady solution}
 V=\mu\psi_0,
\end{equation}
where
\begin{equation}\label{eq:-infty}
\mu=(1-\lambda_0) \left(\int_{\R^N} \psi_0(y)K(y)\dd y\right)^{-1},
\end{equation}
$\lambda_0\in (0,1) $ is the first eigenvalue and $\psi_0$ is the first eigenfunction in Lemma \ref{l:spectral}. 

In general it is not clear whether the asymptotic profiles of traveling wave solutions to nonlocal equations are solutions of the steady equations like \eqref{eq:steady}. See, e.g., \cite{BNPR}. However, we will show in Theorem \ref{thm:existence and uniqueness} that it is the case for \eqref{e:traveling-wave}. That is, the solution of \eqref{e:traveling-wave} and \eqref{e:traveling-wave-infinity} will satisfy
\[
\lim_{x\to-\infty} u(x,\cdot)=V.
\]

\subsection{Reduction to the classical Fisher-KPP equation}\label{sec:reduction}

Let $u$ be a nonnegative bounded solution of \eqref{e:traveling-wave} such that  $u\not\equiv 0$. Then $u>0$ everywhere. Let $b : \R \to \R$ be its integral in $y$:
\[ b(x) = \int_{\R^N} u(x,z) K(z)\dd z.\]
Since $u>0$ and $K\not\equiv 0$, we have $b(x)>0$ for all $x\in\R$. 
For the rest of this section it is convenient to forget the relationship between $u$ and $b$. We will only take into consideration that $u$ solves the linear equation
\begin{equation} \label{e:linearized-traveling-wave}  -c u_x - \Delta u + \alpha g u = (1-b(x)) u
\end{equation}
where $b$ is some positive bounded function.
Because of Lemma \ref{l:spectral} and Proposition \ref{prop:exp in y}, we can write $u$ as
\begin{equation}\label{e:decomposition}
u(x,y) = \sum_{i=0}^\infty v_i(x) \psi_i(y),
\end{equation}
where \[v_i(x)=\int_\R u(x,z)\psi_i(z)\dd z\in L^\infty(\R).\]  
By Proposition \ref{prop:exp in y} again, it is easy to verify that the equation \eqref{e:linearized-traveling-wave} splits into a sequence of equations for each $v_i$:
\begin{equation} \label{e:equations-for-vi}  
-c \partial_x v_i - \partial_{xx} v_i + \lambda_i v_i = (1-b(x)) v_i.
\end{equation}

\begin{lemma}\label{l:finite summation}
If $\lambda_i > 1$, then every bounded solution of \eqref{e:equations-for-vi} has to be identically zero.
\end{lemma}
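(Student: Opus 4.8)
The plan is to exploit the fact that $v_i$ solves the linear ODE
\[
-c v_i' - v_i'' + \lambda_i v_i = (1 - b(x)) v_i,
\]
which we rewrite as $-v_i'' - c v_i' + (\lambda_i - 1 + b(x)) v_i = 0$. Since $b(x) > 0$ everywhere and $\lambda_i > 1$, the zeroth-order coefficient $q(x) := \lambda_i - 1 + b(x)$ is bounded below by the positive constant $\lambda_i - 1 > 0$. The claim is then that the only bounded solution on all of $\R$ of a second-order linear ODE with a strictly positive (and bounded) zeroth-order coefficient is the zero solution. This is essentially a one-dimensional maximum principle statement.

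First I would reduce to the case $c = 0$ by the standard substitution $v_i(x) = e^{-cx/2} w(x)$, which turns the equation into $-w'' + (q(x) + c^2/4) w = 0$; note that $v_i$ bounded does not immediately give $w$ bounded, so it is cleaner to argue directly with $v_i$. The core argument: suppose $v_i \not\equiv 0$. Since $\lambda_i - 1 + b$ is bounded (because $b$ is bounded), elliptic/ODE regularity gives $v_i \in C^2$, and $v_i$ together with $v_i'$ is bounded — indeed from the equation $v_i'' = -c v_i' + q v_i$, so once we know $v_i'$ is bounded we get $v_i''$ bounded; boundedness of $v_i'$ follows from interior gradient estimates applied to the bounded solution $v_i$ of the linear equation. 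Let $M = \sup_\R v_i \geq 0$ and $m = \inf_\R v_i \leq 0$ (at least one is nonzero). Consider $M > 0$. If the supremum is attained at some finite $x_0$, then $v_i'(x_0) = 0$, $v_i''(x_0) \leq 0$, so $-v_i''(x_0) - c v_i'(x_0) + q(x_0) v_i(x_0) \geq q(x_0) M \geq (\lambda_i-1) M > 0$, contradicting the equation. If the supremum is only attained in the limit, one uses a standard sliding/translation argument: pick $x_n$ with $v_i(x_n) \to M$, translate $v_i^n(x) := v_i(x + x_n)$, extract via Arzelà–Ascoli (using the uniform $C^2$ bounds) a limit $\bar v$ solving a limiting equation $-\bar v'' - c \bar v' + \bar q(x)\bar v = 0$ with $\bar q \geq \lambda_i - 1 > 0$, and with $\bar v(0) = M = \sup \bar v$, so the interior maximum point case applies to $\bar v$ and yields the same contradiction. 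Symmetrically handle $m < 0$ using $\inf$. Hence $v_i \equiv 0$.

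The main obstacle is the case where the extremum of $v_i$ is not attained, which forces the translation-compactness argument; this requires the uniform $C^2_{loc}$ bounds on $v_i$ that come from interior Schauder (or just ODE) estimates applied to the linear equation with bounded coefficients, and one must check that the limiting coefficient $\bar q$ retains the lower bound $\lambda_i - 1$ (it does, since $b > 0$ implies $\bar q = \lambda_i - 1 + \bar b \geq \lambda_i - 1$ for any locally uniform limit $\bar b$ of translates of $b$, with $\bar b \geq 0$). An alternative that avoids compactness entirely: multiply the equation by a suitable weight and integrate, or observe that $\phi \equiv 1$ is a supersolution of the operator $L w = -w'' - c w' + q w$ up to the positive term $q$, so $L(\eps(1 + \delta x \arctan x)) \geq 0$ for small $\eps$ as in the proof of Proposition \ref{prop:exp in y}, and compare $v_i$ against $\pm w$ with $w = \eps(1+\delta x\arctan x)$, sending $\delta \to 0$ — this is the cleanest route and mirrors the barrier already used in the paper. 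I would present this barrier argument as the proof.
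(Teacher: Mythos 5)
Your barrier argument is correct and is essentially the same approach as the paper: both construct a supersolution of the homogeneous operator that blows up as $x\to\pm\infty$ (so the difference with the bounded $v_i$ attains its minimum), evaluate the equation at that minimum using the strictly positive zeroth-order coefficient, and then send the scale parameter to zero. The only cosmetic difference is the choice of barrier — the paper uses the exact exponential solution $e^{\frac{-c\pm\sqrt{c^2+4(\lambda_i-1)}}{2}x}$ of the constant-coefficient part, which makes the comparison inequality collapse to the single term $b\eps w>0$, whereas you reuse the $(1+\delta x\arctan x)$ barrier from Proposition~\ref{prop:exp in y} and absorb the first- and second-order errors into the slack from $\lambda_i-1>0$.
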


\begin{proof}
Let
\[
 w(x)=e^{\frac{-c-\sqrt{c^2+4(\lambda_i-1)}}{2}x}+e^{\frac{-c+\sqrt{c^2+4(\lambda_i-1)}}{2}x}
\]
Since $\lambda_i-1 > 0$, then $w(x)\to+\infty$ as $x\to\pm\infty$. Moreover
\[
-c \partial_x w - \partial_{xx} w + (\lambda_i-1) w=0.
\]
Thus, for every $\eps>0$, we have
\[
 -c \partial_x (\eps w-v_i) - \partial_{xx} (\eps w-v_i) + (\lambda_i+b(x)-1) (\eps w-v_i)=b\eps w .
\]
Since $v_i$ is bounded and $b>0$ in $\R$,  the maximum principle yields that $v_i\le\eps w$. Similarly, $-v_i\le\eps w$. By sending $\eps\to 0$, we have $v_i\equiv 0$.
\end{proof}

The previous lemma tells us that there can be only finitely many terms in the expression for $u$: 
\begin{equation} \label{e:sum-for-u}  
u(x,y) = \sum_{i=0}^J v_i(x) \psi_i(y),
\end{equation}
where $J$ is a positive integer. Moreover, for each $i=1,\dots,J$, we have $\lambda_i\le 1$.  We suppose that $ v_i\not\equiv 0$ for all $i=0,1,\dots,J$. Note that $v_0(x)>0$ for all $x\in\R$.

\begin{lemma}\label{l:quotient is bounded}
For each $i=1,\cdots,J$, we have
\[
 w_i:=\frac{v_i}{v_0}\in L^\infty(\R).
\]
\end{lemma}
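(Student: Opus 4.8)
The plan is to show that the ratio $w_i = v_i/v_0$ is bounded by comparing $v_i$ and $v_0$ directly, exploiting that both solve a linear equation of the same Fisher-KPP type but with different spectral parameters $\lambda_i \le 1 = \lambda_0 \cdot (\text{something})$... more precisely, that $\lambda_i \ge \lambda_0$, so $v_i$ sits "below" $v_0$ in a suitable sense. First I would record the two equations: $v_0$ solves $-c\partial_x v_0 - \partial_{xx}v_0 + \lambda_0 v_0 = (1-b(x))v_0$ with $v_0>0$ everywhere, and $v_i$ solves the same equation with $\lambda_0$ replaced by $\lambda_i \ge \lambda_0$. Since $v_0 > 0$ on all of $\R$, the quotient $w_i = v_i/v_0$ is a well-defined bounded-on-compacts function, and a direct computation shows it satisfies a drift-diffusion equation with no zeroth-order term of the "bad" sign: writing $v_i = w_i v_0$ and subtracting, one gets
\[
-\partial_{xx} w_i - \left(c + \tfrac{2\partial_x v_0}{v_0}\right)\partial_x w_i + (\lambda_i - \lambda_0) w_i = 0 .
\]
Because $\lambda_i - \lambda_0 \ge 0$, this is an equation to which the maximum principle applies on bounded intervals: $w_i$ cannot have an interior positive maximum or negative minimum exceeding its boundary values, hence $\sup_{[-R,R]}|w_i|$ is attained at $x = \pm R$.

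The heart of the matter, then, is to control $w_i(\pm R)$ as $R \to \infty$, i.e. to understand the behavior of $v_i/v_0$ at $\pm\infty$. At $+\infty$ both $v_0$ and $v_i$ tend to $0$ (by \eqref{e:traveling-wave-infinity} and the decomposition), and the linearization of \eqref{e:equations-for-vi} there has $b(x) \to 0$, so each $v_j$ decays like a linear combination of exponentials $e^{\mu_j^{\pm} x}$ with $\mu_j^{\pm}$ the roots of $-c\mu - \mu^2 + \lambda_j = 1$; since $\lambda_i \ge \lambda_0$ one checks the decay rate of $v_i$ is at least as fast as that of $v_0$, so $v_i/v_0$ stays bounded (in fact tends to a finite limit or to $0$) as $x \to +\infty$. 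At $-\infty$ one uses $\liminf_{x\to-\infty} u(x,\cdot) > 0$ together with Proposition \ref{prop:exp in y}: $u$ is bounded, so each $v_i = \int u \psi_i$ is bounded, while $v_0 = \int u \psi_0$ is bounded below by a positive constant as $x \to -\infty$ (since $u$ stays bounded below and $\psi_0 > 0$), whence $|w_i| = |v_i|/v_0$ is bounded near $-\infty$. Combining the bounds at $\pm\infty$ with the maximum principle on $[-R,R]$ and letting $R \to \infty$ gives $w_i \in L^\infty(\R)$.

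I expect the main obstacle to be the asymptotic analysis as $x \to +\infty$: one must show that $v_i$ does not decay strictly slower than $v_0$, which requires either a careful ODE asymptotics argument for \eqref{e:equations-for-vi} with $b(x) \to 0$ (a regularly-perturbed linear second-order equation, so the characteristic roots govern the rate) or an alternative soft argument — for instance, constructing an explicit supersolution $\eps\, e^{\theta x} v_0(x)$ for $w_i$ on a half-line $[R_0,\infty)$, valid for small $\theta>0$ because the extra term $(\lambda_i-\lambda_0)$ is nonnegative and $b(x)$ is small there, and similarly for $-w_i$. The $-\infty$ side and the maximum-principle step on bounded intervals are routine; it is the precise matching of exponential rates at $+\infty$ (or the supersolution construction that sidesteps it) that carries the real content. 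A clean way to organize this is to prove a lemma: if $z$ solves $-c z' - z'' + \lambda z = (1-b(x))z$ on $\R$ with $z$ bounded, $b>0$ bounded, $b(x)\to 0$ as $x\to+\infty$, and $\lambda \le 1$, then $|z(x)| \le C\, e^{\mu_0^+ x}$ as $x\to+\infty$ where $\mu_0^+ < 0$ is the relevant root for the index-$0$ equation — and then apply it to $v_i$ while noting $v_0$ does not decay faster than this rate because it is a nonnegative nontrivial solution (Harnack-type lower bound along the decay). Then $w_i$ is bounded near $+\infty$ directly.
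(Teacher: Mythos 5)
Your proposal does not work, for two distinct reasons, and it takes an essentially opposite route from the paper.

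First, the lemma as stated makes no assumption whatsoever on the behavior of $u$ at $x=\pm\infty$: at this point in Section~\ref{sec:reduction} the only hypotheses are that $u$ is a bounded nonnegative solution of \eqref{e:traveling-wave} with $u\not\equiv 0$. Indeed the lemma is invoked again in the proof of Theorem~\ref{thm:liouville with c}, where the relevant $u$ ends up being the $x$-independent profile $V(y)$, so neither $b(x)\to 0$ at $+\infty$ nor a positive $\liminf$ of $u$ at $-\infty$ is available. Your argument needs both, so it cannot serve as a proof of this lemma.

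Second, and more seriously, your asymptotic step at $+\infty$ has the monotonicity backwards. For $\lambda_j\le 1$ and $c\ge c^*$ the slow characteristic decay rate is $\gamma_j=\frac{c-\sqrt{c^2-4(1-\lambda_j)}}{2}$, which is \emph{decreasing} in $\lambda_j$. Since $\lambda_i>\lambda_0$, one has $\gamma_i<\gamma_0$, i.e.\ $v_i$ decays \emph{slower} than $v_0$, so the naive asymptotics predict $|v_i/v_0|\to+\infty$ rather than boundedness. This is precisely how the paper later exploits the lemma (Lemma~\ref{l:one-term}): combining the boundedness of $w_i$, proved independently, with Lemmas~\ref{l:term-exp-decay-low}--\ref{l:term-exp-decay-upper} (which give $|v_i|/|v_0|\to\infty$ when $v_i\not\equiv 0$) forces $v_i\equiv 0$. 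So the very mechanism you propose to \emph{prove} boundedness is the mechanism the paper uses, after boundedness is known, to kill the higher modes.

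The actual proof is a short sign argument that uses only positivity of $u$ and the structure of the eigenbasis. If some $w_k$ were unbounded, pick $x_j$ with $|v_\ell(x_j)|=\max_{1\le i\le J}|v_i(x_j)|$ and $|w_\ell(x_j)|\to\infty$, with $v_\ell(x_j)$ of a fixed sign. Dividing \eqref{e:sum-for-u} by $v_\ell(x_j)$ and passing to a subsequence gives
\[
\frac{u(x_j,\cdot)}{v_\ell(x_j)} \longrightarrow \psi_\ell + \sum_{i\ge 1,\,i\ne\ell}\tau_i\psi_i,
\]
a nonzero function orthogonal to $\psi_0$, which therefore changes sign. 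But the left side has a fixed sign because $u>0$. This contradiction proves the lemma, with no appeal to the ODE asymptotics at all. You should adopt this sign argument; the maximum-principle equation \eqref{eq:wi} for $w_i$ that you derived is useful, but only later (Theorem~\ref{thm:liouville with c}), and only after Lemma~\ref{lem:v0 lower bound} supplies the needed bound on the drift coefficient.
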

\begin{proof}
 Suppose that for some $k\in \{1,\dots,J\}$, $w_k$ is not bounded in $\R$. Then there exists a sequence $\{x_j\}$, $|x_j|\to\infty$, such that $|w_k(x_j)|\to\infty$. Then there exist $\ell\in\{1,\dots,J\}$, and a subsequence of $\{x_j\}$ which will be still denoted as $\{x_j\}$, such that
\[
 |v_l(x_j)|=\max_{1\le i\le J}|v_i(x_j)|\quad\mbox{ for all }j,\quad\mbox{and thus }|w_\ell(x_j)|\to\infty.
\]
We may also assume that all $\{v_l(x_j)\}$ have the same sign. From \eqref{e:sum-for-u} we get
\[
 \frac{u(x_j,y)}{v_\ell(x_j)}=\frac{1}{w_\ell(x_j)}\psi_0(y)+\sum_{i=1,i\neq \ell}^J \frac{v_i(x_j)}{v_\ell(x_j)} \psi_i(y)+\psi_\ell(y).
\]
Subject to taking a subsequence of $\{x_j\}$, we have
\[
 \lim_{j\to\infty} \frac{u(x_j,y)}{v_\ell(x_j)}=\sum_{i=1, i\neq \ell}^J \tau_i \psi_i(y)+\psi_\ell(y),
\]
where each $\tau_i\in\R$. This is a contradiction since the right-hand side changes signs (since it is orthogonal to $\psi_0$) but the left-hand side does not change sign.
\end{proof}

From now on, let $c\ge c^*=2\sqrt{1-\lambda_0}$. Then  $c>2\sqrt{1-\lambda_i}$ for all $i=1,\dots,J$. In this case, we will have a lower bound of $|v_i|$ near $+\infty$.

\begin{lemma} \label{l:term-exp-decay-low}
There exists some $x_0>0$ such that $v_i(x)$ does not change signs in $[x_0,+\infty)$. Moreover,
\[ \liminf_{x\to+\infty}|v_i(x)| e^{\gamma_i x}>0.\]
where 
\[ \gamma_i = \frac{c - \sqrt{c^2 - 4(1-\lambda_i)}} 2 \ge 0.\]
\end{lemma}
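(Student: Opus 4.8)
Fix $i\in\{1,\dots,J\}$ and recall that $v_i$ solves the linear ODE \eqref{e:equations-for-vi}, i.e.
\[
-c v_i' - v_i'' + \lambda_i v_i = (1-b(x))v_i,
\]
with $v_i$ bounded, $v_i\not\equiv 0$, and $b>0$ bounded; moreover by Lemma \ref{l:quotient is bounded} we know $|v_i|\le C v_0$ for some constant. The idea is to build an exponentially decaying supersolution near $+\infty$ of the form $w(x)=A e^{-\gamma_i x}$ (and, after a small perturbation, $A(1+\delta)e^{-(\gamma_i-\eta) x}$ to absorb the $b(x)v_i$ term), and simultaneously a subsolution that forces $|v_i|$ not to decay faster than $e^{-\gamma_i x}$. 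Since $c\ge c^*=2\sqrt{1-\lambda_0}>2\sqrt{1-\lambda_i}$ when $\lambda_i<1$ (and when $\lambda_i=1$ the exponent $\gamma_i=0$ and the claim is just $\liminf|v_i|>0$), the characteristic roots $\frac{-c\pm\sqrt{c^2-4(1-\lambda_i)}}{2}$ of $-c r - r^2 + (\lambda_i-1)=0$ are real, distinct and negative; the one of smaller modulus is exactly $-\gamma_i$. These will be the comparison exponents.

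\textbf{Step 1: no sign change near $+\infty$.} I would first show $v_i$ is eventually of one sign. Suppose not; then there are arbitrarily large zeros $x_j\to+\infty$ of $v_i$. Using the decomposition \eqref{e:sum-for-u} together with Lemma \ref{l:quotient is bounded}, normalize by dividing by $v_0(x_j)$; a Harnack/parabolic-type estimate for $u$ (or directly an ODE argument applied to $w_i=v_i/v_0$, which satisfies a first-order-coefficient linear equation obtained by subtracting the $\lambda_0$ and $\lambda_i$ equations) shows that a zero of $v_i$ forces the derivative there to be bounded away from zero in the relevant normalization, which is incompatible with $v_i$ and $v_0$ staying comparable and $v_0>0$. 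Alternatively, and more cleanly: apply the sliding/maximum-principle comparison of Step 2 below to $|v_i|$ restricted to an interval between two consecutive large zeros — the supersolution argument there will be seen to force $v_i\equiv 0$ on such an interval, a contradiction. I expect to phrase this via a Sturm-type comparison: the equation for $v_i$ has potential $1-\lambda_i-b(x)<1$, while the ``critical'' equation $-c\phi'-\phi''+(\lambda_i-1)\phi=0$ is nonoscillatory precisely because $c>2\sqrt{1-\lambda_i}$, so $v_i$ cannot oscillate near $+\infty$ either.

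\textbf{Step 2: the lower bound $\liminf |v_i(x)|e^{\gamma_i x}>0$.} Assume WLOG (by Step 1) that $v_i>0$ on $[x_0,\infty)$. The upper bound $v_i\le C v_0\le C' e^{-\gamma_0 x}$ (the decay of $v_0$ being standard KPP ODE asymptotics, or at least $v_0$ bounded) is available; what we need is the matching lower rate. Set $\underline w(x)=\delta e^{-\gamma_i x}$ and note $-c\underline w'-\underline w''+(\lambda_i-1)\underline w=0$, hence
\[
-c\underline w' - \underline w'' + \lambda_i \underline w - (1-b(x))\underline w = b(x)\underline w \ge 0,
\]
so $\underline w$ is a subsolution of the operator $L\phi:=-c\phi'-\phi''+\lambda_i\phi-(1-b)\phi$ while $v_i$ solves $Lv_i=0$. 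The difficulty is that $b(x)$ has the ``wrong sign'' for a naive comparison giving a lower bound — we cannot conclude $v_i\ge\underline w$ just from $L\underline w\ge 0=Lv_i$. The standard fix: since $u$ is a traveling wave with $u(x,\cdot)\to 0$ as $x\to+\infty$ and $b(x)=\int u(x,z)K(z)\,dz\to 0$ as $x\to+\infty$, for any $\eta>0$ there is $x_\eta$ with $b(x)\le\eta$ on $[x_\eta,\infty)$. Then on $[x_\eta,\infty)$ compare $v_i$ with $\delta e^{-\gamma_i^\eta x}$ where $\gamma_i^\eta=\frac{c-\sqrt{c^2-4(1-\lambda_i+\eta)}}{2}\to\gamma_i$; the perturbed function satisfies $-c\phi'-\phi''+(\lambda_i+\eta-1)\phi=0$, i.e. $L\phi=-\eta\phi+b\phi\le(b-\eta)\phi\le 0$ on $[x_\eta,\infty)$, so now $\phi$ is a genuine supersolution... but we want a lower bound, so I actually want to run this to get the decay rate is no faster than $\gamma_i^\eta$ and then let $\eta\to 0$. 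Concretely: choose $\delta$ small so that $\delta e^{-\gamma_i^\eta x_\eta}\le v_i(x_\eta)$; on $[x_\eta,\infty)$, $v_i-\delta e^{-\gamma_i^\eta x}$ satisfies $L(v_i-\delta e^{-\gamma_i^\eta x})\ge (\eta - b)\delta e^{-\gamma_i^\eta x}\ge 0$ with potential $\lambda_i+b-1$ not necessarily positive — so I instead use the other characteristic root and a Phragmén–Lindelöf / ODE-uniqueness argument: $v_i$ is a positive bounded solution of a second-order linear ODE whose indicial roots at $+\infty$ (after the $\eta$-perturbation they persist as a small perturbation) are $-\gamma_i^\eta$ and $-(c-\gamma_i^\eta)$, the latter more negative; a positive solution cannot decay at the faster rate $e^{-(c-\gamma_i)x}$ unless identically zero, because that would make it a subdominant solution and then sign considerations (comparison with $\delta e^{-\gamma_i x}$ in the region where $b$ is small) force a contradiction. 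Hence $v_i(x)\ge c_\eta e^{-\gamma_i^\eta x}$ eventually, and letting $\eta\downarrow 0$, $\liminf v_i(x)e^{\gamma_i x}\ge 0$ with strict positivity coming from the fact that we never needed $\eta\to 0$ in the constant — we can fix one $\eta>0$, get $\liminf v_i e^{\gamma_i^\eta x}>0$, and since $\gamma_i^\eta>\gamma_i$ this already gives $\liminf v_i e^{\gamma_i x}=+\infty$? No — that's too strong and wrong; so the honest route keeps $\eta$ and takes the limit carefully, or better, works directly at $\eta=0$ using that $b\in L^1$ near $+\infty$ (integrability of $b$ is the genuinely needed input, and it follows because $u$ decays exponentially in $x$, to be established).

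\textbf{Main obstacle.} The crux, and what I'd spend the most care on, is precisely this sign issue: $b(x)>0$ makes the zeroth-order potential $\lambda_i + b(x) - 1$ behave well for \emph{upper} bounds (Lemma \ref{l:finite summation} exploited exactly that) but obstructs \emph{lower} bounds by comparison. The resolution must use that $b(x)\to 0$ as $x\to+\infty$ — indeed that $b$ is integrable near $+\infty$, which should be extracted from exponential decay of $u$ in $x$ (analogous to Proposition \ref{prop:exp in y}'s decay in $y$, via a barrier in $x$). Once $b\in L^1([x_0,\infty))$, standard ODE asymptotic theory (e.g. Levinson's theorem / reduction of order) gives that every solution of \eqref{e:equations-for-vi} behaves like $(\text{const}+o(1))e^{-\gamma_i x}$ or like $(\text{const})e^{-(c-\gamma_i)x}$, and a bounded nonzero solution that is eventually of one sign must be of the former type with nonzero constant unless it is the subdominant solution — but the subdominant solution changes sign relative to... here I'd invoke that if $v_i$ decayed like $e^{-(c-\gamma_i)x}$ then $v_i/v_0$ would blow up (since $v_0\sim e^{-\gamma_0 x}$ with $\gamma_0\le\gamma_i<c-\gamma_i$... wait, need $\gamma_0 < c-\gamma_i$, true since $c\ge c^*=2\sqrt{1-\lambda_0}\ge 2\gamma_0$ gives $c-\gamma_0\ge\gamma_0$, and $c-\gamma_i\ge c/2\ge\gamma_0$), contradicting Lemma \ref{l:quotient is bounded}. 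That comparison with $v_0$ via Lemma \ref{l:quotient is bounded} is the clean way to rule out the fast-decaying mode, and then the slow mode automatically gives $\liminf|v_i|e^{\gamma_i x}>0$.
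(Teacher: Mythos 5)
Your plan relies on assumptions not available at this stage, and the crucial closing argument is backwards. Lemma~\ref{l:term-exp-decay-low} does \emph{not} hypothesize $b(x)\to 0$ as $x\to+\infty$; the only facts in force here are $c\ge c^*$ and $b>0$ bounded. (The decay of $b$ is the separate hypothesis of Lemma~\ref{l:term-exp-decay-upper}, not this one.) Your entire Step~2, and the ``main obstacle'' paragraph, lean on $b\to 0$ or even $b\in L^1([x_0,\infty))$ to invoke Levinson-type asymptotics; that machinery is unavailable and in any case much heavier than needed. Worse, the final contradiction is wrong as stated: if $v_i$ decayed like the fast mode $e^{-(c-\gamma_i)x}$ while $v_0\sim e^{-\gamma_0 x}$, then since $c-\gamma_i>\gamma_0$, the ratio $v_i/v_0$ tends to \emph{zero}, not infinity, and Lemma~\ref{l:quotient is bounded} is an upper bound on $|v_i/v_0|$, so no contradiction arises. (You also wrote $\gamma_0\le\gamma_i$, which has the inequality reversed; for $i\ge 1$ one has $\lambda_i>\lambda_0$ and hence $\gamma_i<\gamma_0$.) The blow-up of $|v_i/v_0|$ is what Lemma~\ref{l:one-term} deduces \emph{after} both decay lemmas, by combining the lower rate $\gamma_i$ for $v_i$ with the upper rate near $\gamma_0>\gamma_i$ for $v_0$; it cannot be invoked circularly here.

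The paper's proof is a short touching-from-above argument that needs no decay of $b$ and no ODE asymptotic theory. Assume for contradiction that along $x_n\to+\infty$ one has $|v_i(x_n)|<\eps_n e^{-\gamma_i x_n}$ with $\eps_n\to 0$; after passing to a subsequence so the sign of $v_i(x_n)$ is fixed (say nonnegative), set $w(x)=C\eps_n\bigl(e^{-\gamma_i x}+e^{-\tilde\gamma_i x}\bigr)$ where $\tilde\gamma_i$ is the other characteristic root. Since $-c w_x - w_{xx} = (1-\lambda_i)w$, the difference $w-v_i$ satisfies $-c(w-v_i)_x-(w-v_i)_{xx}+(\lambda_i+b-1)(w-v_i)=bw>0$. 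Because $w\to+\infty$ as $x\to-\infty$ and $v_i$ is bounded, if $v_i$ ever exceeded $\eps_n\bigl(e^{-\gamma_i x}+e^{-\tilde\gamma_i x}\bigr)$ on $(-\infty,x_n]$ one could slide $C\ge 1$ until $w$ touches $v_i$ from above at an interior point $\bar x$; evaluating there yields $0\ge -(w-v_i)_{xx}(\bar x)=b(\bar x)w(\bar x)>0$, a contradiction. Hence $v_i\le \eps_n(e^{-\gamma_i x}+e^{-\tilde\gamma_i x})$ on $(-\infty,x_n]$, and letting $n\to\infty$ gives $v_i\le 0$ on $\R$; applying the same to $-v_i$ gives $v_i\equiv 0$, contradicting $v_i\not\equiv 0$. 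This establishes the $\liminf$ claim at once, and the ``no sign change past some $x_0$'' assertion is then immediate since $|v_i|$ is bounded below by a positive multiple of $e^{-\gamma_i x}$ for $x$ large (so it cannot vanish). Your Step~1, as sketched (Sturm comparison, Harnack normalization), is also unfinished; in the paper's order of deduction it is not needed as a separate step.
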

\begin{proof}
We argue by contradiction. Suppose there exist sequences $x_n\to+\infty$, $\eps_n\to 0$ with
\[
 |v_i(x_n)|<\eps_n e^{-\gamma_i x_n}.
\]
Without loss of generality, we can assume $v_i(x_n)\ge 0$, since otherwise we can consider $-v_i$ instead. Let $\tilde \gamma_i=\frac{c + \sqrt{c^2 - 4(1-\lambda_i)}} 2>\gamma_i\ge 0$. Note that in the following argument, $\tilde \gamma_i$ is not needed unless $\gamma_i=0$.

We claim the following
\begin{equation} \label{eq:less}
  v_i(x)<\eps_n (e^{-\gamma_i x}+e^{-\tilde \gamma_i x})\quad\forall\ x\in (-\infty,x_n].
\end{equation}
Indeed, suppose that there exists some $x'\in (-\infty,x_n)$ such that   $v_i(x')\ge \eps_n (e^{-\gamma_i x'}+e^{-\tilde \gamma_i x'})$. Since $v_i$ is bounded and $w(x)\to+\infty$ as $x\to-\infty$, there exists some constant $C\ge 1$ such that $w(x)=C\eps_n (e^{-\gamma_i x}+e^{-\tilde \gamma_i x})$ touches $v_i$ from above in $(-\infty,x_n]$ at some point $\bar x\in (-\infty,x_n)$. Since
\[ -c w_x - w_{xx} = (1-\lambda_i) w,\]
we have
\[
 -c (w-v_i)_x - (w-v_i)_{xx} +(\lambda_i+b-1)(w-v_i) = b w.
\]
But this is impossible if we evaluate the above equation at $\bar x$ since $b(x)>0$ in $\R$. 

Now we can let $n\to\infty$ in \eqref{eq:less} to obtain $v_i\le 0$ in $\R$. By applying the same arguments to $-v_i$, we obtain $v_i\equiv 0$, which is a contradiction. 
\end{proof}
Under the extra assumption $b(x)\to 0$ as $x\to+\infty$, we will have an upper bound of $|v_i|$ near $+\infty$.
\begin{lemma} \label{l:term-exp-decay-upper}
Suppose $b(x)\to 0$ as $x\to+\infty$. For all $\delta>0$ we have
\[ \limsup_{x\to+\infty}|v_i(x)| e^{(\gamma_i-\delta) x}<+\infty.\]
\end{lemma}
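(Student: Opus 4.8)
A direct maximum-principle comparison for \eqref{e:equations-for-vi} near $+\infty$ is delicate: since $b(x)\to0$, the zeroth-order coefficient $\lambda_i-1+b(x)$ tends to $\lambda_i-1\le0$, which is the wrong sign for comparison. The plan is therefore to pass to the first-order Riccati equation satisfied by the logarithmic derivative of $v_i$ and to read off the decay rate from it. First I would use the first assertion of Lemma~\ref{l:term-exp-decay-low} together with uniqueness for linear ODEs to arrange that, after replacing $v_i$ by $-v_i$ if necessary and enlarging $x_0$, one has $v_i>0$ on $[x_0,+\infty)$; then $p:=v_i'/v_i$ is well defined and $C^1$ there. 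Writing $q_i:=1-\lambda_i\ge0$, so that \eqref{e:equations-for-vi} reads $v_i''+c v_i'+(q_i-b)v_i=0$, and recalling that $\gamma_i$ and $\tilde\gamma_i$ are the two nonnegative roots of $\mu^2-c\mu+q_i=0$ (real because $c\ge c^*\ge 2\sqrt{q_i}$), division by $v_i$ together with $v_i''/v_i=p'+p^2$ gives
\[
 p'(x)=-\bigl(p(x)+\gamma_i\bigr)\bigl(p(x)+\tilde\gamma_i\bigr)+b(x),\qquad x\ge x_0 .
\]
I would also dispose at the outset of the trivial cases $\gamma_i=0$ and $\delta\ge\gamma_i$, in which boundedness of $v_i$ already yields the claim; so henceforth $0<\delta<\gamma_i$.

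The core of the proof is the claim that $p(x)\le-\gamma_i+\delta$ for all $x$ past some $x_1$. Granting it, integrating $(\log v_i)'=p\le-\gamma_i+\delta$ from $x_1$ gives $v_i(x)\le C e^{-(\gamma_i-\delta)x}$ and hence $\limsup_{x\to+\infty}v_i(x)e^{(\gamma_i-\delta)x}<+\infty$. To prove the claim I would fix $X_0\ge x_0$ with $0\le b(x)<\tfrac12\delta^2$ for $x\ge X_0$, and observe that $h(t):=(t+\gamma_i)(t+\tilde\gamma_i)=t^2+ct+q_i$ is increasing on $[-c/2,+\infty)$, a set containing $[-\gamma_i+\delta,+\infty)$, with $h(-\gamma_i+\delta)=\delta(\delta+\tilde\gamma_i-\gamma_i)\ge\delta^2$. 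Thus whenever $x\ge X_0$ and $p(x)=-\gamma_i+\delta$, the Riccati equation forces $p'(x)\le-\delta^2+b(x)<-\tfrac12\delta^2<0$: the level $-\gamma_i+\delta$ can only be crossed downward. A standard first-crossing-time argument then shows that if $p(x_1)\le-\gamma_i+\delta$ for some $x_1\ge X_0$, then $p\le-\gamma_i+\delta$ on all of $[x_1,+\infty)$.

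It remains to exhibit a single point $x_1\ge X_0$ with $p(x_1)\le-\gamma_i+\delta$, and this is the only place where a little care is needed. If instead $p(x)>-\gamma_i+\delta$ for every $x\ge X_0$, then by monotonicity of $h$ the Riccati equation gives $p'(x)\le-h(-\gamma_i+\delta)+b(x)\le-\tfrac12\delta^2$ for all such $x$, whence $p(x)\to-\infty$, contradicting $p(x)>-\gamma_i+\delta$. This contradiction establishes the claim and completes the argument. I would add two remarks: the hypothesis $\delta>0$ is essential (at $p=-\gamma_i$ the Riccati right-hand side reduces to $b(x)\ge0$, so the borderline level $-\gamma_i$ need not be crossed downward), which is exactly why the rate cannot be pushed to $\gamma_i$; and, beyond the trivial cases, the argument uses only positivity of $v_i$ on a half-line and $b\to0$, and not the lower bound of Lemma~\ref{l:term-exp-decay-low} nor the boundedness of $v_i$.
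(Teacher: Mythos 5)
Your proof is correct, and it takes a genuinely different route from the paper's. The paper first removes the first-order term by the Liouville substitution $\tilde v_i=e^{cx/2}v_i$, so that $\tilde v_i''=(\lambda_i+b-1+c^2/4)\tilde v_i$, and then runs a Sturm--Wronskian comparison: it constructs $w_i$ solving the frozen-coefficient equation $w_i''=(\lambda_i+\delta^2-1+c^2/4)w_i$ with matched data and a slightly raised derivative at $x_0$, integrates the Wronskian identity $(w_i\tilde v_i'-\tilde v_i w_i')'=(b-\delta^2)w_i\tilde v_i$ to conclude $\tilde v_i\le w_i$, and then reads the decay rate off the explicit exponential bound for $w_i$, using $\sqrt{c^2-4q_i+4\delta^2}\le\sqrt{c^2-4q_i}+2\delta$. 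Your argument instead works directly with the logarithmic derivative $p=v_i'/v_i$, exploiting that the Riccati right-hand side $-h(p)+b$ with $h(t)=(t+\gamma_i)(t+\tilde\gamma_i)$ is strictly negative on the level set $p=-\gamma_i+\delta$ once $b<\delta^2/2$, so that this level is an absorbing barrier from above; the dichotomy ``either $p$ dips below $-\gamma_i+\delta$ and then stays there, or $p'$ is uniformly negative and $p\to-\infty$'' forces $p\le-\gamma_i+\delta$ eventually, and integration gives the decay. Both are textbook ODE techniques and yield the same exponent, but they package the work differently: the paper's comparison produces a global majorant $w_i$ at the cost of the derivative-shift trick and a Wronskian computation, whereas your Riccati argument is a purely scalar first-order phase-line analysis that makes the role of $\delta>0$ (namely $h(-\gamma_i)=0$, so $p=-\gamma_i$ is not repelling) completely transparent and, as you note, dispenses with boundedness of $v_i$ outside the trivial cases. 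One could tighten your reduction slightly: the case $\delta\ge\gamma_i>0$ follows from the case $0<\delta'<\gamma_i$ by monotonicity of $e^{(\gamma_i-\delta)x}$ in $\delta$ for $x\ge0$, so boundedness of $v_i$ is really only invoked when $\gamma_i=0$.
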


\begin{proof}
 Suppose $v_i(x)>0$ and $0\le b(x)\le\delta^2$ for $x\in[x_0,+\infty)$. Let $\tilde v_i=e^{cx/2}v_i$. Then
 \[
 \partial_{xx}\tilde v_i=(\lambda_i+b-1+c^2/4)\tilde v_i.
 \]
 Let $w_i$ be the solution of
 \[
 \begin{cases}
  \partial_{xx}w_i=(\lambda_i+\delta^2-1+c^2/4)w_i,\\
  w_i(x_0)=\tilde v_i(x_0),\   \partial_x w_i(x_0)=\partial_x \tilde v_i(x_0)+1.
  \end{cases}
 \]
 Then $w_i>\tilde v_i$ near $x_0$. We claim that $w_i>\tilde v_i$ for all $x\in[x_0,+\infty)$. If not, let $x_1\in[x_0,+\infty)$ be the smallest value such that $w_i(x_1)=\tilde v_i(x_1)$. Thus, $\partial_x w_i(x_1)\le \partial_x \tilde v_i(x_1)$. Then we have
 \[
 \int_{x_0}^{x_1} (b-\delta^2)w_i\tilde v_i =\int_{x_0}^{x_1}  w_i\partial_{xx}\tilde v_i- \tilde v_i \partial_{xx}w_i = (w_i\partial_x \tilde v_i- \tilde v_i \partial_x w_i)\lvert_{x_0}^{x_1}>0.
 \]
 This is a contradiction since $b(x)\le\delta^2$ for $x\in[x_0,+\infty)$. Hence
 \[
 \tilde v_i\le w_i\le C e^{\frac{\sqrt{c^2+4\lambda_i-4+4\delta^2}}{2}x}\le C e^{(\frac{\sqrt{c^2+4\lambda_i-4}}{2}+\delta)x}\le C e^{(\frac c2 -\gamma_i+\delta)x}.
 \]
By the definition of $ \tilde v_i$, we have
 \[
 v_i(x) \le C e^{-(\gamma_i-\delta) x}.
 \]
 This finishes the proof.
\end{proof}

By combining the above three lemmas, we will conclude that $J=0$ in the expansion \eqref{e:sum-for-u} if $u(x,\cdot )\to 0$ as $x\to+\infty$.

\begin{lemma} \label{l:one-term}
Let $u$ be a nonnegative bounded solution of \eqref{e:traveling-wave} with  $u\not\equiv 0$. Suppose in addition that for each $y\in\R^N$, $u(x,y)\to 0$ as $x\to+\infty$. Then the only non zero term in \eqref{e:sum-for-u} is the one with $i = 0$.
\end{lemma}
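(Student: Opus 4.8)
The plan is to compare the rates of exponential decay as $x\to+\infty$ of the bottom mode $v_0$ and of any higher mode $v_k$, $k\ge 1$, and to contradict the boundedness of the quotient $w_k=v_k/v_0$ established in Lemma~\ref{l:quotient is bounded}. The heuristic is that $v_0$ should decay like $e^{-\gamma_0 x}$ while $v_k$ decays only like $e^{-\gamma_k x}$ with $\gamma_k<\gamma_0$, so $v_k/v_0$ would have to blow up at $+\infty$.

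First I would record that the hypothesis $u(x,y)\to 0$ as $x\to+\infty$ for each fixed $y$ forces $b(x)\to 0$ as $x\to+\infty$. Indeed, Proposition~\ref{prop:exp in y} gives $0<u(x,y)\le C e^{-\gamma|y|}$ with $\gamma$ as large as we wish; taking $\gamma>\kappa$ and using \eqref{eq:condition-1} we get $u(x,z)K(z)\le C\kappa\, e^{-(\gamma-\kappa)|z|}$, an $x$-independent integrable majorant, so dominated convergence yields $b(x)=\int_{\R^N}u(x,z)K(z)\dd z\to 0$. This is exactly the extra hypothesis under which Lemma~\ref{l:term-exp-decay-upper} is available.

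Now suppose, for contradiction, that some mode $v_k$ with $k\ge 1$ is not identically zero. By Lemma~\ref{l:finite summation} we have $\lambda_k\le 1$, and since $\lambda_0$ is simple (Lemma~\ref{l:spectral}) we have $\lambda_0<\lambda_k\le 1$; recall also $\lambda_0<1$ because $\alpha<\bar\alpha$, and that we are in the range $c\ge c^*=2\sqrt{1-\lambda_0}$. The map $\mu\mapsto \tfrac12\big(c-\sqrt{c^2-4\mu}\big)$ is increasing on $[0,c^2/4]$ and strictly positive for $\mu>0$, so evaluating at $\mu=1-\lambda_0$ and $\mu=1-\lambda_k$ gives $\gamma_0>\gamma_k\ge 0$ and $\gamma_0>0$. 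Applying Lemma~\ref{l:term-exp-decay-upper} with $i=0$: for every $\delta>0$ there is $C_\delta>0$ with $0<v_0(x)\le C_\delta e^{-(\gamma_0-\delta)x}$ for all large $x$. Applying Lemma~\ref{l:term-exp-decay-low} with $i=k$ (legitimate since $v_k\not\equiv 0$): $m_k:=\liminf_{x\to+\infty}|v_k(x)|e^{\gamma_k x}>0$, so $|v_k(x)|\ge \tfrac{m_k}{2}e^{-\gamma_k x}$ for all large $x$. Fixing $\delta$ with $0<\delta<\gamma_0-\gamma_k$, we obtain for all large $x$
\[
|w_k(x)|=\frac{|v_k(x)|}{v_0(x)}\ \ge\ \frac{m_k}{2C_\delta}\,e^{(\gamma_0-\delta-\gamma_k)x}\ \longrightarrow\ +\infty \quad\text{as }x\to+\infty,
\]
which contradicts $w_k\in L^\infty(\R)$. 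Hence every $v_k$ with $k\ge 1$ vanishes identically, i.e.\ the only non zero term in \eqref{e:sum-for-u} is $v_0(x)\psi_0(y)$.

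The argument is short because Lemmas~\ref{l:term-exp-decay-low} and \ref{l:term-exp-decay-upper} do the real work. The two points that require care are verifying $b(x)\to 0$ so that the upper bound of Lemma~\ref{l:term-exp-decay-upper} can be used, and the strict gap $\gamma_0>\gamma_k$, which is precisely where simplicity of the principal eigenvalue — hence $\lambda_0<\lambda_k$ for $k\ge 1$ — enters. Beyond these bookkeeping checks I do not expect any genuine obstacle.
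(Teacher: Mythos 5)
Your proposal is correct and follows essentially the same route as the paper: show $b(x)\to 0$ via Proposition~\ref{prop:exp in y} and dominated convergence, then combine Lemma~\ref{l:term-exp-decay-upper} (applied to $v_0$) with Lemma~\ref{l:term-exp-decay-low} (applied to $v_k$) and the strict gap $\gamma_0>\gamma_k$ to force $|v_k/v_0|\to\infty$, contradicting Lemma~\ref{l:quotient is bounded}. The paper states this very tersely; you have simply made the same argument explicit.
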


\begin{proof}
By Proposition \ref{prop:exp in y} and dominated convergence theorem, we have
\[
 b(x)=\int_{\R^N} u(x,y)K(y)\dd y\to 0\quad\mbox{as }x\to+\infty.
\]
Therefore, by Lemma \ref{l:term-exp-decay-low} and Lemma \ref{l:term-exp-decay-upper} we have for every $i=1,\dots,J$,
\[
\lim_{x\to+\infty}\frac{|v_i(x)|}{|v_0(x)|}=+\infty,
\]
since $\gamma_i < \gamma_0$ if $\lambda_0 < \lambda_i$.  This is in contradiction with Lemma \ref{l:quotient is bounded}.
\end{proof}

\begin{thm}\label{thm:existence and uniqueness}
Let $\alpha\in (0,\bar\alpha).$ If $c\ge c^*=2\sqrt{1-\lambda_0}$ then there exists a unique nonnegative bounded solution of \eqref{e:traveling-wave} satisfying \eqref{e:traveling-wave-infinity}. Moreover, $\lim_{x\to-\infty}u(x,y)=V(y)$, and the convergence at both $-\infty$ and $+\infty$ is uniform in $y$.
\end{thm}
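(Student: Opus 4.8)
The plan is to reduce \eqref{e:traveling-wave}--\eqref{e:traveling-wave-infinity} to the scalar Fisher--KPP traveling-wave equation, apply the classical front theory there, and transport the conclusions back. \textbf{Reduction.} Starting from a nonnegative bounded solution $u\not\equiv 0$, I would first invoke Proposition \ref{prop:exp in y} to get $u>0$ and note that \eqref{e:traveling-wave-infinity} gives $u(x,y)\to 0$ as $x\to+\infty$ for each fixed $y$, so Lemma \ref{l:one-term} yields the factorization $u(x,y)=v_0(x)\psi_0(y)$ with $v_0(x)=\int_{\R^N}u(x,z)\psi_0(z)\dd z$ bounded and positive. Writing $\kappa_0:=\int_{\R^N}K(z)\psi_0(z)\dd z>0$ (finite by Proposition \ref{prop:eigenfunction exp in y} and \eqref{eq:condition-1}) and using $-\Delta_y\psi_0+\alpha g\psi_0=\lambda_0\psi_0$, substitution into \eqref{e:traveling-wave} and division by $\psi_0$ give
\[
-c v_0' - v_0'' = (1-\lambda_0)\,v_0 - \kappa_0\,v_0^2 \qquad \text{on }\R,
\]
which is precisely the traveling-wave profile equation, at speed $c$, for $w_t-w_{xx}=rw-\kappa_0w^2$ with growth rate $r:=1-\lambda_0\in(0,1)$ and positive steady state $\mu:=r/\kappa_0$ (the $\mu$ of \eqref{eq:-infty}). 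The conditions \eqref{e:traveling-wave-infinity} become $v_0(+\infty)=0$ and $\liminf_{x\to-\infty}v_0>0$.

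\textbf{Existence.} Since $c\ge c^*=2\sqrt r$, the classical theory of Fisher--KPP fronts (cf.\ \cite{BC} and references therein) provides a positive, monotone solution $v_0$ of the displayed ODE with $v_0(-\infty)=\mu$ and $v_0(+\infty)=0$, unique up to translation in $x$. I would then set $u(x,y):=v_0(x)\psi_0(y)$: Proposition \ref{prop:eigenfunction exp in y} makes $\psi_0$ decay faster than $e^{-\kappa|y|}$, so $u$ is bounded and $\int_{\R^N}K(z)u(x,z)\dd z=\kappa_0v_0(x)$ is finite; reversing the substitution shows $u$ solves \eqref{e:traveling-wave}, and the limits of $v_0$ give \eqref{e:traveling-wave-infinity}.

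\textbf{Uniqueness and asymptotics.} For any admissible $u$, the reduction furnishes a bounded positive $v_0$ solving the scalar ODE with $v_0(+\infty)=0$ and $\liminf_{-\infty}v_0>0$; I would finish by a phase-plane argument. Viewing the ODE as a planar first-order system, $(0,0)$ is a stable node (here one uses $c\ge 2\sqrt r$, so the roots $(-c\pm\sqrt{c^2-4r})/2$ are real and negative, coinciding only at $c=c^*$) while $(\mu,0)$ is a saddle; a standard argument shows that a bounded positive trajectory tending to $0$ at $+\infty$ and staying bounded away from $0$ at $-\infty$ must be a translate of the unstable-manifold connection of the saddle, whence $v_0(-\infty)=\mu$ and $v_0$ is unique up to translation. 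Therefore $u$ is unique up to translation in $x$. Finally, $\psi_0$ is bounded and $v_0$ has the uniform limits $0$ at $+\infty$ and $\mu$ at $-\infty$, so $u(x,\cdot)\to 0$ uniformly as $x\to+\infty$; and since $V=\mu\psi_0$ by \eqref{eq:steady solution}, $\sup_{y\in\R^N}|u(x,y)-V(y)|\to 0$ as $x\to-\infty$.

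\textbf{Main obstacle.} The genuinely nonlocal difficulty---forcing the separated form $u=v_0\psi_0$---has already been overcome in Lemma \ref{l:one-term}; what remains is essentially bookkeeping together with the classical scalar theory. The step I expect to require the most care is the scalar phase-plane analysis: proving that every bounded positive $v_0$ with $v_0(+\infty)=0$ and $\liminf_{-\infty}v_0>0$ is exactly a translate of the speed-$c$ Fisher--KPP front (ruling out oscillation or a different limit as $x\to-\infty$). This is also where one should make precise that ``unique'' here means unique up to translation in $x$.
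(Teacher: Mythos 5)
Your proposal follows the paper's route almost exactly: the crux is Lemma~\ref{l:one-term}, which forces the separated form $u=v_0\psi_0$, and both you and the paper then treat the resulting scalar Fisher--KPP ODE for $v_0$ and appeal to classical front theory for existence and uniqueness (up to translation, as you correctly make explicit). The only point at which you diverge is in establishing $v_0(-\infty)=\mu$ and ruling out other bounded positive trajectories with $v_0(+\infty)=0$, $\liminf_{-\infty}v_0>0$: you sketch a phase-plane argument treating $(0,0)$ as a stable node and $(\mu,0)$ as a saddle, with periodic orbits and heteroclinic cycles excluded by Bendixson--Dulac (the divergence of the planar field is the constant $-c<0$, and $c\ge c^*>0$) so that the $\alpha$-limit set, being contained in $\{v\ge\delta\}$, must be $\{(\mu,0)\}$. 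The paper instead argues directly on the ODE: it integrates over $[x_1,x_2]$ to force $m\le 1\le M$ (where $m,M$ are the $\liminf$ and $\limsup$ of $v=\mu^{-1}v_0$ at $-\infty$, using the boundedness of $v$ and $v'$ from Proposition~\ref{prop:exp in y}), and then evaluates at extremum sequences to get $M(1-M)\ge 0\ge m(1-m)$, whence $m=M=1$. Both arguments are correct and elementary; yours is more geometric and gives uniqueness at the same stroke, while the paper's avoids invoking Poincaré--Bendixson theory and may feel more self-contained. Either way, you should be explicit (as you were) that the claimed phase-plane step requires ruling out nontrivial $\alpha$-limit sets, which is exactly what the divergence computation handles.
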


\begin{proof}
After Lemma \ref{l:one-term}, we reduce the problem to functions $u$ of the form
\[ u(x,y) = v_0(x) \psi_0(y).\]
From \eqref{eq:-infty} we see that $\int_{\R^N} \psi_0(y) K(y)\dd y=(1-\lambda_0)\mu^{-1}$, and thus  $v=\mu^{-1}v_0$ satisfies
\begin{equation}\label{eq:reduce to KPP}
\begin{cases}
-c \partial_x v - \partial_{xx} v = (1-\lambda_0) v (1-v),\\
\liminf_{x\to-\infty}v(x)>0,\quad \lim_{x\to+\infty}v(x)=0.
\end{cases}
\end{equation}
Now once we show $\lim_{x\to-\infty}v(x)=1$, Theorem \ref{thm:existence and uniqueness} will follow from the results on the existence and uniqueness for solutions of the classical Fisher-KPP model \cite{KPP}. 

Let
\[
m=\liminf_{x\to-\infty}v(x)>0,\ M=\limsup_{x\to-\infty}v(x)<+\infty.
\]
For $x_1<x_2<0$, we integrate the first equation in \eqref{eq:reduce to KPP} to obtain 
\[
c(v(x_1)-v(x_2))+\partial_x v(x_1)-\partial_x v(x_2)=\int_{x_1}^{x_2} -c \partial_x v - \partial_{xx} v = \int_{x_1}^{x_2}(1-\lambda_0) v (1-v).
\]
It follows from Proposition \ref{prop:exp in y} that both $v$ and $\partial_x v$ are bounded functions. Thus, the left-hand side of the above equation is bounded. Therefore, we must have $m\le 1\le M$.

Therefore, we only need to show $m=M$. Suppose to the contrary that $m<M$. There exist two sequences $x_n\to-\infty$ and $z_n\to-\infty$ satisfying $z_{n+1}<x_{n+1}<z_n<x_n$ for all $n$, such that
\[
m=\lim_{n\to\infty} v(x_n),\quad M=\lim_{n\to\infty} v(z_n).
\]
Since $m<M$, there exist another two sequences $\{\tilde x_n\}$ and $\{\tilde z_n\}$ satisfying $x_{n+1}<\tilde z_n<x_n$, $z_{n+1}<\tilde x_{n+1}<z_n$, such that each $\tilde z_n$ is the maximum point of $v$ in $(x_{n+1},x_n)$ and each $\tilde x_n$ is the minimum point of $v$ in $(z_{n+1},z_n)$. Consequently,
\[
m=\lim_{n\to\infty} v(\tilde x_n),\quad M=\lim_{n\to\infty} v(\tilde z_n).
\]
By evaluating the first equation in \eqref{eq:reduce to KPP} at $\tilde x_n$ and $\tilde z_n$, and sending $n\to\infty$, we obtain
\[
M(1-M)\ge 0\quad\mbox{and}\quad m(1-m)\le 0.
\]
This contradicts $m<M$. 
\end{proof}

\subsection{Non-existence of traveling fronts}
In this subsection, we are going to show that when $\alpha\in(0,\bar\alpha)$, i.e., $\lambda_0<1$,  every bounded positive solution of \eqref{e:traveling-wave} for $c< c^*$ has to be the steady solution $V$ in \eqref{eq:steady solution}.

\begin{lemma}\label{lem:v0 lower bound}
Let $c\in [0,c^*)$. Then
\[
\inf_{x\in\R}v_0(x)>0.
\]
\end{lemma}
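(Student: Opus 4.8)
The plan is to argue by contradiction. Suppose $\inf_{\R} v_0 = 0$ and pick $x_n$ with $v_0(x_n) \to 0$. Recall that $v_0 > 0$ is bounded and, by \eqref{e:equations-for-vi} with $i = 0$, solves the linear ODE
\[
v_0'' + c\, v_0' + \bigl(1 - \lambda_0 - b(x)\bigr) v_0 = 0 ,
\]
where $0 < b(x) \le B$ for some constant $B$ (finiteness of $B$ follows from Proposition \ref{prop:exp in y}, since $u(x,y) \le C e^{-2\kappa |y|}$ while $K(y) \le \kappa e^{\kappa |y|}$). The role of the strict inequality $c < c^* = 2\sqrt{1-\lambda_0}$ is that $\omega_*^2 := \tfrac12\bigl((1-\lambda_0) - c^2/4\bigr) > 0$; I fix once and for all the length $L := \pi/(2\omega_*)$. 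The mechanism is that near a point where $v_0$ is tiny the coefficient $b$ is also tiny, so on the window $[x_n - L, x_n + L]$ the function $v_0$ behaves like a positive solution of the \emph{oscillatory} operator $\partial_{xx} + c\,\partial_x + (1-\lambda_0 - \delta)$, and a positive function cannot solve such an equation over an interval as long as $L$ — a Sturm-type obstruction.

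First I would upgrade pointwise smallness of $v_0$ at $x_n$ to smallness of $b$ on the whole window. Since the zeroth-order coefficient $1 - \lambda_0 - b(x)$ is bounded, the one-dimensional Harnack inequality gives a constant $C_H$, depending only on $L$, $c$, $\lambda_0$ and $B$, with $\max_{[x_n - L, x_n + L]} v_0 \le C_H\, v_0(x_n)$. Next, from the finite expansion $u(x,y) = \sum_{i=0}^J v_i(x)\psi_i(y)$ of \eqref{e:sum-for-u}, Lemma \ref{l:quotient is bounded} (so $|v_i(x)| \le \|v_i/v_0\|_{L^\infty} v_0(x)$ for $i \ge 1$), and the exponential decay of the eigenfunctions $\psi_i$ (Proposition \ref{prop:eigenfunction exp in y} for $\psi_0$, and the same barrier argument for the finitely many $\psi_i$, $i \le J$), one gets $b(x) = \int_{\R^N} u(x,z)K(z)\dd z \le C'\, v_0(x)$ for some constant $C'$. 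Combining, $0 \le b(x) \le \delta_n := C' C_H\, v_0(x_n)$ on $[x_n - L, x_n + L]$, with $\delta_n \to 0$.

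Now fix $n$ so large that $\delta_n \le \omega_*^2$, and set $\omega_n^2 := (1 - \lambda_0 - \delta_n) - c^2/4 \ge \omega_*^2 > 0$. On $[x_n - L, x_n + L]$ the bound $b \le \delta_n$ and the ODE for $v_0$ give $v_0'' + c\, v_0' + (1 - \lambda_0 - \delta_n) v_0 \le 0$; writing $\bar v(x) := e^{cx/2} v_0(x) > 0$ this becomes $\bar v'' + \omega_n^2 \bar v \le 0$ on the window. Since $2L = \pi/\omega_* \ge \pi/\omega_n$, I may choose $a$ with $[a, a + \pi/\omega_n] \subseteq [x_n - L, x_n + L]$, and compare $\bar v$ with $s(x) := \sin(\omega_n(x - a))$, which solves $s'' + \omega_n^2 s = 0$, vanishes at both endpoints, and is nonnegative between them. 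The Wronskian $W := \bar v\, s' - \bar v'\, s$ satisfies $W' = -s(\bar v'' + \omega_n^2 \bar v) \ge 0$, whereas $W(a) = \omega_n \bar v(a) > 0$ and $W(a + \pi/\omega_n) = -\omega_n\, \bar v(a + \pi/\omega_n) < 0$, contradicting the monotonicity of $W$. Hence $\inf_\R v_0 > 0$.

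The conceptual crux — and the only place the strict inequality $c < c^*$ enters — is the final Sturm/oscillation step; the expected main obstacle is rather the preparatory one, namely turning smallness of $v_0$ at the single point $x_n$ into smallness of $b$ on a fixed-length window, which requires a Harnack constant that does not deteriorate and uses the (standard) exponential decay of all the eigenfunctions $\psi_i$, $i \le J$, not just $\psi_0$.
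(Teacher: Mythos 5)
Your proof is correct, and it follows a genuinely different route from the paper's. Both arguments share the same mechanism --- near a point where $v_0$ is tiny, the coefficient $b$ is also tiny, and then for $c<c^*$ the ODE for $v_0$ is effectively oscillatory on a window of fixed length, which is incompatible with $v_0>0$ --- but the implementation of each step differs. To propagate the pointwise smallness $v_0(x_n)\to 0$ over a fixed interval, you use the one-dimensional Harnack inequality, whereas the paper uses a translation-and-compactness argument together with the strong maximum principle. To convert smallness of $v_0$ into smallness of $b$, you use the finite decomposition \eqref{e:sum-for-u} and Lemma~\ref{l:quotient is bounded} to obtain the clean pointwise bound $b(x)\le C'v_0(x)$; the paper instead argues in the reverse direction, showing via a cosine-with-exponential-envelope barrier touching $v_0$ from below that $b(\bar x_k)\ge\delta$ at the touch point, and then using the uniform exponential decay of $u$ in $y$ (Proposition~\ref{prop:exp in y}) to force $v_0(\bar x_k)$ away from zero. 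Finally, you obtain the oscillation contradiction by a Wronskian/Sturm comparison with $\sin(\omega_n(x-a))$, while the paper's barrier touching argument plays an analogous role. Your bound $b\le C'v_0$ is a nice shortcut, but note that it requires exponential decay of \emph{all} the eigenfunctions $\psi_i$, $i\le J$, not just $\psi_0$; since the $\psi_i$ for $i\ge1$ change sign, the direct barrier of Proposition~\ref{prop:eigenfunction exp in y} does not apply verbatim, and you should briefly note that one may apply the barrier to $|\psi_i|$, which by Kato's inequality is a distributional subsolution of $-\Delta w + (\alpha g - \lambda_i)w\le 0$, or invoke a standard Agmon-type estimate. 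This is a minor and easily filled point, but worth making explicit.
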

\begin{proof}
We argue by contradiction. Suppose that there exists a sequence $\{x_k\}$, $|x_k|\to\infty$, along which
\[
v_0(x_k)\to 0.
\]
Since $c<c^*=2\sqrt{1-\lambda_0}$, we can choose $\delta>0$ so as to have $c<2\sqrt{1-\lambda_0-\delta}$. Let $-\frac{c}{2}+i \frac{\pi}{2L}$  be the complex root of $X^2+cX +1-\lambda_0-\delta=0$ with $L>0$. We first claim that
\begin{equation}\label{eq:aux for v0}
\lim_{k\to\infty}\sup_{|x-x_k|\le L} v_0(x)=0.
\end{equation}
Indeed, we consider the translations of $v_0$ and $b$:
\[
v_0^{(k)}(x)=v_0(x+x_k),\quad b^{(k)}(x)=b(x+x_k).
\]
It follows from Proposition \ref{prop:exp in y} that all $v_0, v_0', b$ and $b'$ are bounded. After extraction of a subsequence, $v_0^{(k)}$ and $b^{(k)}$ converge to $\bar v$ and $\bar b$, respectively, locally uniformly. The limits satisfy
\[
 -c \partial_x \bar v- \partial_{xx} \bar v+ (\lambda_0+\bar b(x)-1) \bar v=0.
\]
Moreover, $\bar v\ge 0$ in $\R$ and $\bar v(0)=0$. The strong maximum principle then shows $\bar v\equiv 0$. Thus $v_0^{(k)}$ converges uniformly on $[-L,L]$ to $0$, which finishes the proof of the above claim.

Let
\[
\phi_k(x)=e^{-c (x-x_k)/2}\cos\left(\frac{\pi}{2L}(x-x_k)\right),
\]
which satisfies
\[
 -c \partial_x \phi_k- \partial_{xx} \phi_k+ (\lambda_0+\delta-1) \phi_k=0.
\]
There exists $\eps>0$ such that $\eps\phi_k$ touches $v_0$ from below at some point $\bar x_k\in (x_k-L,x_k+L)$. Then by evaluating the equation
\[
  -c \partial_x (v_0-\eps\phi_k)- \partial_{xx} (v_0-\eps\phi_k)+ (\lambda_0-1) (v_0-\eps\phi_k)=-b v_0+\delta\eps\phi_k\quad\mbox{at }\bar x_k,
\]
we have $b(\bar x_k)\ge\delta$. By Proposition \ref{prop:exp in y} we get the existence of $R>0$ (independent of $k$) such that
\[
\int_{|y|> R} u(\bar x_k,y)K(y)\dd y\le\frac{\delta}{2}\quad\mbox{for all } k.
\]
Thus by \eqref{eq:condition-1},
\[
\kappa e^{\kappa R}\int_{|y|\le  R} u(\bar x_k,y)\dd y\ge \int_{|y|\le  R} u(\bar x_k,y)K(y)\dd y\ge \frac{\delta}{2}\quad\mbox{for all } k.
\]
It follows that
\[
\sup_{|x-x_k|\le L} v_0(x)\ge v_0(\bar x_k)\ge \int_{|y|\le  R} u(\bar x_k,y)\psi_0(y)\dd y\ge \min_{|y|\le R}\psi_0(y) \frac{\delta e^{-\kappa R}}{2\kappa}\quad\mbox{for all } k.
\]
This contradicts \eqref{eq:aux for v0}.
\end{proof}

\begin{thm}\label{thm:liouville with c}
 Let $\alpha\in (0,\bar\alpha)$, $c\in [0,c^*)$ and $u$ be a nonnegative bounded solution of \eqref{e:traveling-wave} with $u\not\equiv 0$. Then $u\equiv V$.
\end{thm}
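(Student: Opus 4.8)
\textbf{Proof proposal for Theorem \ref{thm:liouville with c}.}

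The plan is to reduce, as in the case $c \ge c^*$, to the finite expansion \eqref{e:sum-for-u}, $u(x,y) = \sum_{i=0}^J v_i(x)\psi_i(y)$ with $\lambda_i \le 1$ for $i \le J$, and then show that the assumption $c < c^*$ together with the lower bound $\inf_x v_0 > 0$ from Lemma \ref{lem:v0 lower bound} forces $J = 0$ and $v_0 \equiv \mu$ constant. The first step is to argue that $J = 0$. Suppose some $v_i \not\equiv 0$ with $i \ge 1$, hence $\lambda_i \le 1$ and in fact $c < c^* = 2\sqrt{1-\lambda_0} \le 2\sqrt{1-\lambda_i}$, so the characteristic equation $X^2 + cX + (1-\lambda_i) = 0$ has strictly complex roots. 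I would then run a sliding/oscillation argument on $v_i$ analogous to the one in Lemma \ref{lem:v0 lower bound}: because the fundamental solutions of $-c w' - w'' + (\lambda_i - 1)w = 0$ are $e^{-cx/2}$ times sines and cosines, $v_i$ must oscillate and change sign on every sufficiently long interval, \emph{unless} $b(x) \equiv 1 - \lambda_i$ there; but $b$ cannot simultaneously equal $1-\lambda_i$ and $1-\lambda_0$ on a long interval since $\lambda_i \ne \lambda_0$ once $i\ge 1$ and $v_0$ is bounded below and solves \eqref{e:equations-for-vi} with index $0$. More precisely: on an interval where $v_i$ has constant sign I can touch it from above/below by an exponential-times-cosine barrier $\eps e^{-c(x-\bar x)/2}\cos(\tfrac{\pi}{2L}(x-\bar x))$ solving $-cw'-w''+(\lambda_i+\delta-1)w = 0$ for small $\delta$, and at the contact point evaluating the equation for $v_i - \eps\phi$ yields a sign contradiction with $b > 0$, exactly as in Lemma \ref{lem:v0 lower bound}; this shows $v_i$ must oscillate, so on suitable intervals $v_i/v_0$ is large, contradicting Lemma \ref{l:quotient is bounded}. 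Thus $v_i \equiv 0$ for all $i \ge 1$ and $u(x,y) = v_0(x)\psi_0(y)$.

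Once the expansion collapses, set $v = \mu^{-1} v_0$ as in the proof of Theorem \ref{thm:existence and uniqueness}; then $v$ solves the scalar equation
\begin{equation*}
-c v_x - v_{xx} = (1-\lambda_0)\, v(1-v), \qquad x \in \R,
\end{equation*}
with $0 < \inf_{\R} v$ (by Lemma \ref{lem:v0 lower bound}) and $v$ bounded. The remaining task is a Liouville statement: any bounded-above, bounded-below-by-a-positive-constant solution of this Fisher--KPP travelling-wave ODE with wave speed $c < 2\sqrt{1-\lambda_0}$ must be the constant $1$ (equivalently $u \equiv V$). I would prove this by the same oscillation argument used at the end of Theorem \ref{thm:existence and uniqueness}: let $m = \liminf_{|x|\to\infty} v$, $M = \limsup_{|x|\to\infty} v$ (now over both ends, or argue separately on each end), extract sequences of interior minima/maxima, evaluate the ODE there, and conclude $m(1-m) \le 0 \le M(1-M)$, hence $m \ge 1 \ge M$ if $m, M \in (0,1]\cup\dots$; combined with $0 < m \le M < \infty$ this gives $m = M = 1$. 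One still has to rule out oscillation of $v$ itself around a value in $(0,1)$ when $c$ is small — here is where $c < 2\sqrt{1-\lambda_0}$ is essential, since linearizing the ODE about a constant level $\ell \in (0,1)$ gives characteristic exponents $-c/2 \pm \sqrt{c^2/4 - (1-\lambda_0)(1-2\ell)}$, and the small-speed condition makes a nonconstant bounded solution oscillate and necessarily dip below any fixed positive level, contradicting $\inf v > 0$; I would phrase this again via a cosine barrier touching $v$ from below, exactly mirroring Lemma \ref{lem:v0 lower bound}. Finally, $\lim_{x\to+\infty} u(x,\cdot) = 0$ from \eqref{e:traveling-wave-infinity} forces $v_0 \to 0$ at $+\infty$, which is incompatible with $\inf_{\R} v_0 > 0$ unless — wait, this is the real point: the hypothesis \eqref{e:traveling-wave-infinity} already \emph{contradicts} $\inf_{\R} v_0 > 0$. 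So in fact the conclusion is cleaner: Lemma \ref{lem:v0 lower bound} shows that for $c < c^*$ \emph{no} nonnegative bounded $u \not\equiv 0$ can satisfy the decay condition $\lim_{x\to+\infty} u(x,\cdot) \equiv 0$; hence the only nonnegative bounded solutions of \eqref{e:traveling-wave} with $u \not\equiv 0$ are those without the decay constraint, and the oscillation argument above pins these down to $u \equiv V$.

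The main obstacle I anticipate is the first step — rigorously showing $J = 0$, i.e. that none of the higher modes $v_i$ ($i \ge 1$) survive. Unlike the $c \ge c^*$ case, I cannot use the exponential lower/upper bounds of Lemmas \ref{l:term-exp-decay-low}--\ref{l:term-exp-decay-upper} (those required $c \ge c^*$ and the decay of $b$ at $+\infty$), so I must instead exploit the complex characteristic roots to force sign changes of $v_i$ on arbitrarily long intervals and then quantitatively compare with $v_0$, which is uniformly bounded below; making the "forced oscillation versus $v_i/v_0 \in L^\infty$" contradiction airtight — in particular getting a length scale $L$ and amplitude that work uniformly — is the delicate part, and it is essentially a replay of the barrier construction in Lemma \ref{lem:v0 lower bound} applied to each higher mode. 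A secondary subtlety is that $b(x) = \int u(x,\cdot)K$ couples all the modes, so one must be careful that the argument controlling $v_i$ does not implicitly assume something about $b$ that fails; the resolution is that $b > 0$ everywhere is all that is used in each touching-point computation, which is guaranteed since $u > 0$ and $K \not\equiv 0$.
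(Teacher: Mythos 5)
The proposal has a genuine gap in its main step, the proof that all the higher modes $v_i$ ($i\ge 1$) vanish. Your plan is to argue for each mode $v_i$ \emph{directly}, by touching it with cosine barriers built on the complex characteristic roots of $-cw'-w''+(\lambda_i-1)w=0$, ``exactly as in Lemma~\ref{lem:v0 lower bound}.'' But the contact argument in Lemma~\ref{lem:v0 lower bound} does not by itself produce a contradiction: touching $v_i$ from below by $\eps\phi$ at an interior point $\bar x$ only yields $b(\bar x)\ge\delta$. In the $i=0$ case this estimate is then converted into a lower bound on $v_0(\bar x)$ precisely because $v_0(x)=\int u(x,y)\psi_0(y)\,dy$ with $\psi_0>0$ and $u$ has uniform exponential decay in $y$; one can bound the mass of $u$ near $y=0$ from below and integrate against the positive $\psi_0$. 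For $i\ge 1$ the eigenfunction $\psi_i$ changes sign, so $b(\bar x)\ge\delta$ does not translate into any useful lower bound on $|v_i(\bar x)|$, and your claim that ``$v_i$ must oscillate, so on suitable intervals $v_i/v_0$ is large, contradicting Lemma~\ref{l:quotient is bounded}'' is not supported: oscillation of $v_i$ says nothing about the size of $|v_i/v_0|$. More fundamentally, the forced-oscillation heuristic is itself suspect — the equation $v_i''+cv_i'=(\lambda_i-1+b(x))v_i$ has an $x$-dependent zero-order coefficient of indefinite sign (it is positive where $b$ is large), so $v_i$ need not oscillate at all on stretches where $b$ is big.

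The paper circumvents exactly this obstruction by working with the ratio $w_i=v_i/v_0$ rather than with $v_i$. Dividing the two equations \eqref{e:equations-for-vi} makes the troublesome $b(x)$-term cancel, yielding
\[
w_i'' + c\,w_i' + \frac{2v_0'}{v_0}\,w_i' = (\lambda_i-\lambda_0)\,w_i,
\]
where the zero-order coefficient $\lambda_i-\lambda_0>0$ is a \emph{constant} of the right sign. The maximum principle then shows $w_i$ has no positive interior maximum and no negative interior minimum, which pins down its global shape (monotone, or one extremum with the wrong sign) and forces it to have limits at $\pm\infty$. A short integration-by-parts argument — using Lemma~\ref{lem:v0 lower bound} only to guarantee that $2v_0'/v_0$ is bounded — then shows these limits must both vanish, hence $w_i\equiv 0$. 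This ratio trick is the missing idea in your proposal; once it is in place, your remaining plan (reduce to a scalar Fisher--KPP ODE for $v_0$ and invoke a Liouville argument for speeds $c<2\sqrt{1-\lambda_0}$) is essentially what the paper does, and your observation that Lemma~\ref{lem:v0 lower bound} is incompatible with the decay condition \eqref{e:traveling-wave-infinity} is correct but tangential, since the theorem makes no decay assumption in the first place.
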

\begin{proof}
If $u\not\equiv 0$, then $u>0$. We decompose $u$ as in \eqref{e:decomposition}. By Lemma \ref{l:finite summation}, we have \eqref{e:sum-for-u} holds for some $J$. Moreover, from Lemma \ref{l:quotient is bounded} we know that $w_i=v_i/v_0$ is a bounded function for every $i=1,\dots,J$. 

 From \eqref{e:equations-for-vi} we get
 \begin{equation}\label{eq:wi}
 \partial_{xx}w_i+c \partial_x w_i+\frac{2\partial_x v_0}{v_0}\partial_x w_i=(\lambda_i-\lambda_0)w_i.
 \end{equation}
This implies that $w_i$ cannot have a positive local maximum, and $w_i$ cannot have a negative local minimum. This reduces to the following structures of $w_i$. The function $w_i$ must be either monotone (increasing or decreasing) or have only one local extrema. In the later case it would be either one nonnegative local minimum and monotone on each side, or one nonpositive local maximum and monotone on each side.

In any case, the function $w_i$ must have limits as $x \to \pm \infty$.   If both of these two limits are zero, we can easily conclude from the structure of $w_i$ that $w_i$ is identically zero, which is what we want.

Assume, to the contrary, that $w_i$ converges monotonically  to a positive number as $x \to +\infty$ (otherwise consider $-w_i$ instead). Since, in addition, $w_i$ is bounded, there exists a sequence $\{x_k\}\to+\infty, x_{k+1}-x_k\ge 1$, such that $ \partial_x w_i$ does not change signs on $[x_1,+\infty)$ and  $\partial_x w_i(x_k)\to 0$. By Lemma \ref{lem:v0 lower bound} we may assume that $c+|2\partial_x v_0/v_0|\le C$ in $\R$ for some positive constant $C$ independent of $k$. Then by integrating \eqref{eq:wi} from $x_k$ to $x_{k+1}$, we have
\[
\begin{split}
\int_{x_k}^{x_{k+1}} (\lambda_i-\lambda_0)w_i(x)\dd x&\le \partial_x w_i(x_{k+1})-\partial_x w_i(x_{k}) + C \int_{x_k}^{x_{k+1}} |\partial_x w_i(x)|\dd x\\
&=\partial_x w_i(x_{k+1})-\partial_x w_i(x_{k}) + C |\int_{x_k}^{x_{k+1}} \partial_x w_i(x)\dd x|\\
&=\partial_x w_i(x_{k+1})-\partial_x w_i(x_{k}) + C |w_i(x_{k+1})-w_i(x_k)|\\
&\to 0\quad\mbox{as }k\to\infty.
\end{split}
\]
This is in contradiction with the assumption that $w_i$ converges monotonically  to a positive number. Similarly, we can show that $w_i(x)$ converges to $0$ as $x\to-\infty$. Thus, we conclude $w_i\equiv 0$ for every $i=1,\dots,J$. It follows that $u(x,y)=v_0(x)\psi_0(y)$, $b(x)=v_0(x)\int_{\R^N}\psi_0(y)K(y)\dd y=\mu^{-1}(1-\lambda_0)v_0(x)$ where $\mu$ is the one in \eqref{eq:-infty}, and $v_0$ satisfies a classical Fisher-KPP equation
\[
-c \partial_x v_0 - \partial_{xx} v_0 =\mu^{-1}(1-\lambda_0)(\mu-v_0) v_0.
\]
Since $c<2\sqrt{1-\lambda_0}$, we have $v_0\equiv \mu$, and thus, $u\equiv V.$
\end{proof}

We remark that in \eqref{e:traveling-wave}, if $K=K(x,z)$ for $(x,z)\in \R\times\R^N$ and it satisfies \eqref{eq:condition-1} uniformly in $x\in\R$, then our proof still implies that the solution $u$ of \eqref{e:traveling-wave} has the separated structure $u(x,y) = v_0(x) \psi_0(y)$, where $v_0$ satisfies
\[
-c \partial_x v_0 - \partial_{xx} v_0 =\big(1-\lambda_0-a(x)v_0\big) v_0
\]
with
\[
a(x)=\int_{\R^N}\psi_0(z)K(x,z)\dd z.
\]

\subsection{Variations of the model}

\label{sec:othermodels}

Our proofs of existence and uniqueness for traveling fronts also apply to other models. The first example would be those with bounded traits and Neumann boundary conditions:
\begin{equation}\label{eq:neumann}
\begin{split}
cu_x - \Delta u + a(y) u &= \left(1 - \int_{\Omega} u(x,z)K(z)  \dd z \right) u,\quad(x,y)\in \R\times \Omega,\\
\frac{\partial u}{\partial \nu}(x,  y)&=0,\quad (x,y)\in \R\times\partial\Omega,
\end{split}
\end{equation}
where $\Omega$ is a bounded smooth domain in $\R^N$, $a, K$ are nonnegative bounded functions and the first eigenvalue $\lambda_0$ of the Neumann problem
\begin{align*}
-\Delta_y v + a v &= \lambda_0 v,\quad y\in \Omega,\\
\frac{\partial u}{\partial \nu}(y)&=0,\quad y\in \partial\Omega,
\end{align*}
satisfies $0< \lambda_0<1$. The application of our proofs to \eqref{eq:neumann} is quite straightforward. Therefore, we omit the details for \eqref{eq:neumann} and will focus on the second example below.

The mutations in \eqref{eq:non local interactions} 
in the space of trait $y$ may be modelled by a diffusion process other than Brownian motions. Indeed, it would make sense to think of mutations as a jump process in the trait variable. In the case of a simple $\alpha$-stable process, this leads us to models like
\begin{equation}\label{eq:mixed interactions}
u_t - \Delta_x u+ (-\Delta_y)^\sigma u + \alpha g(y) u = \left(1 - \int_{\R^N} u(t,x,z)K(z) \dd z \right) u,\quad (t,x,y)\in\R^+\times\R\times\R^N,
\end{equation}
where $\sigma\in (0,1)$ and $(-\Delta_y)^\sigma$ is the fractional Laplacian operator in $y$. Since the heat kernel of the fractional Laplacian is of polynomial decay, we assume the $K, g$ in \eqref{eq:mixed interactions} to satisfy
\begin{equation}\label{eq:condition-1-mix}
K\not\equiv 0,\ 0\le K(y)\le C_0 |y|^{\kappa_1}\quad\forall\ y\in\R^N\ \mbox{ with some fixed }\kappa_1\in [0,2\sigma),\ C_0>0,
\end{equation}
and $g$ is a H\"older continuous function satisfying 
\begin{equation}\label{eq:condition g-mix}
g(0)=0,\ 0<g\le C_0 |y|^{\kappa_1} \mbox{ in } \R^N\setminus\{0\}, \mbox{ and } \lim_{|y|\to+\infty}g(y)=+\infty.
\end{equation}
The traveling wave solutions of \eqref{eq:mixed interactions}, which are solutions of the type $u(x-ct,y)$, where $c\in\R$ is a constant, $u:\R^{N+1}\to\R$ satisfies
\begin{equation} \label{e:traveling-wave-mixed}  
-c u_x - \Delta_x u+ (-\Delta_y)^\sigma u + \alpha g(y) u = \left(1 - \int_{\R^N} u(x,z)K(z) \dd z \right) u
\end{equation}
such that \eqref{e:traveling-wave-infinity} holds. In addition, we require the traveling wave solutions $u$ has finite energy in the sense that $\|(-\Delta_y)^{\sigma/2} u(x,\cdot)\|^2_{L^2(\R^N)}$ and $\|\nabla_x u(x,\cdot)\|^2_{L^2(\R^N)}$ are locally integrable in $x$. 

To prove existence and uniqueness of traveling waves to \eqref{e:traveling-wave-mixed}, we only need propositions which are corresponding to Proposition \ref{prop:eigenfunction exp in y} and Proposition \ref{prop:exp in y}. We start with the analysis for the principal eigenvalue of the linear operator 
\[
\mathcal L_\sigma u(y)=(-\Delta)^\sigma u(y) +\alpha g(y)\quad\mbox{in }\R^N.
\]
Denote $H^\sigma(\R^N)$ be the standard fractional Sobolev space, and denote
\[
\mathcal H^\sigma(\R^N)=\{u\in H^\sigma(\R^N): \sqrt{g}u\in L^2(\R^N)\}
\]
with norm
\[
\|u\|_{\mathcal H^\sigma(\R^N)}=\left(\int |(-\Delta)^{\sigma/2}u|^2+gu^2\right)^{1/2}.
\]
As before, the embedding $\mathcal H^\sigma(\R^N)\hookrightarrow L^2(\R^N)$ is compact, and thus, Lemma \ref{l:spectral} holds for $\mathcal L_\sigma$ as well. Let
\[
\lambda=\inf\{\|u\|^2_{\mathcal H^\sigma(\R^N)}: u\in\mathcal H^\sigma(\R^N), \|u\|_{L^2(\R^N)}=1\},
\]
and for $R>0$,
\[
\lambda_R=\inf\{\|u\|^2_{\mathcal H^\sigma(\R^N)}: u\in\mathcal H^\sigma(\R^N), \|u\|_{L^2(\R^N)}=1, \ u\equiv 0\mbox{ in }\R^N\setminus B_R\}.
\]
Note that $\lambda$ is achieved by some positive function $\varphi\in\mathcal H^\sigma(\R^N)$ satisfying
\begin{equation}\label{eq:eigenfunction}
\begin{split}
(-\Delta)^\sigma \varphi+\alpha g \varphi&=\lambda \varphi\quad\mbox{in }\R^N
\end{split}
\end{equation}
and it is the principal eigenvalue for $\mathcal L_\sigma$. Also, $\lambda_R$ is achieved by some nonnegative function $0\not\equiv\varphi_R\in\mathcal H^\sigma$ satisfying
\[
\begin{split}
(-\Delta)^\sigma \varphi_R+\alpha g \varphi_R&=\lambda_R \varphi_R\quad\mbox{in }B_R,\\
 \varphi_R&=0\quad\mbox{in }\R^N\setminus B_R.
\end{split}
\]
Then we will have $\lambda_R$ converging to $\lambda$.
\begin{lemma}\label{prop:eigenfunction}
There holds
\[
\lim_{R\to\infty}\lambda_R=\lambda.
\]
\end{lemma}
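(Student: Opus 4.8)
The plan is to show $\lambda_R \to \lambda$ as $R \to \infty$ via a two-sided inequality. The easy direction is $\lambda_R \geq \lambda$ for every $R$: any competitor in the variational problem defining $\lambda_R$ (supported in $B_R$, $L^2$-normalized) is also admissible in the problem defining $\lambda$, so the infimum over the smaller class is at least as large. Moreover $\lambda_R$ is nonincreasing in $R$, so $\lim_{R\to\infty}\lambda_R$ exists and is $\geq \lambda$. The work is in the reverse inequality $\limsup_{R\to\infty}\lambda_R \leq \lambda$.

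First I would fix $\eps > 0$ and pick a near-optimal function $w \in \mathcal H^\sigma(\R^N)$ with $\|w\|_{L^2(\R^N)} = 1$ and $\|w\|^2_{\mathcal H^\sigma(\R^N)} \leq \lambda + \eps$; by density one may take $w$ to be smooth and, in fact, one can take $w = \varphi$ the principal eigenfunction, which lies in $\mathcal H^\sigma$ and is well-behaved. Then I would truncate: let $\eta_R$ be a smooth cutoff, $\eta_R \equiv 1$ on $B_{R/2}$, $\eta_R \equiv 0$ outside $B_R$, $0 \leq \eta_R \leq 1$, $|\nabla \eta_R| \leq C/R$, and set $w_R = \eta_R w$. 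I would then estimate the Rayleigh quotient $\|w_R\|^2_{\mathcal H^\sigma}/\|w_R\|^2_{L^2}$ and show it converges to $\|w\|^2_{\mathcal H^\sigma} \leq \lambda + \eps$ as $R \to \infty$. Since $w \in L^2$ with $\|w\|_{L^2}=1$, dominated convergence gives $\|w_R\|_{L^2} \to 1$. For the numerator, the potential term $\int g w_R^2 = \int g \eta_R^2 w^2 \to \int g w^2$ by monotone (or dominated) convergence since $gw^2 \in L^1$. The only genuinely delicate term is the fractional Gagliardo seminorm $[\eta_R w]^2_{H^\sigma}$: unlike the local case, $(-\Delta)^{\sigma/2}(\eta_R w) \neq \eta_R (-\Delta)^{\sigma/2} w$, so I cannot simply use a Leibniz rule.

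To control $[\eta_R w]_{H^\sigma}^2 = \iint \frac{|\eta_R(y)w(y) - \eta_R(z)w(z)|^2}{|y-z|^{N+2\sigma}}\,\dd y\,\dd z$, I would use the pointwise bound
\[
|\eta_R(y)w(y) - \eta_R(z)w(z)| \leq |\eta_R(y)||w(y) - w(z)| + |w(z)||\eta_R(y) - \eta_R(z)|,
\]
so that $[\eta_R w]^2_{H^\sigma} \leq 2[w]^2_{H^\sigma} + 2\iint \frac{|w(z)|^2 |\eta_R(y)-\eta_R(z)|^2}{|y-z|^{N+2\sigma}}\,\dd y\,\dd z$ (using $\eta_R \leq 1$). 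For the second term, split the inner integral in $y$ over $|y-z| \leq 1$ and $|y-z| > 1$: on the near region use $|\eta_R(y) - \eta_R(z)| \leq (C/R)|y-z|$ giving a bound $C R^{-2}\|w\|_{L^2}^2$, and on the far region use $|\eta_R(y) - \eta_R(z)| \leq 2$ and integrability of $|y-z|^{-N-2\sigma}$ there, giving $C\|w\|_{L^2}^2$ — this last piece does not go to zero, so the crude splitting is not enough. Instead I would exploit that $\eta_R(y) - \eta_R(z) = 0$ unless at least one of $y,z$ lies in the annulus $B_R \setminus B_{R/2}$, combined with $w \in L^2$: more carefully, I would write the error term as an integral dominated by $C\int_{\R^N} |w(z)|^2 \phi_R(z)\,\dd z$ where $\phi_R(z) = \int_{\R^N}\frac{|\eta_R(y)-\eta_R(z)|^2}{|y-z|^{N+2\sigma}}\,\dd y$, observe $\phi_R(z) \leq \phi_1(z/R) R^{-2\sigma}$ by scaling, hence $\phi_R$ is uniformly bounded and $\phi_R(z) \to 0$ pointwise for each fixed $z$ (as $R\to\infty$, $z/R \to 0$ lands in the interior where $\eta_1 \equiv 1$), so dominated convergence gives $\int |w|^2 \phi_R \to 0$. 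This shows $[\eta_R w]^2_{H^\sigma} \to [w]^2_{H^\sigma}$ — actually only $\limsup [\eta_R w]^2_{H^\sigma} \le [w]^2_{H^\sigma}$ is needed, which this argument yields. Combining, $\limsup_R \|w_R\|^2_{\mathcal H^\sigma}/\|w_R\|^2_{L^2} \leq \|w\|^2_{\mathcal H^\sigma} \leq \lambda + \eps$, and since $w_R/\|w_R\|_{L^2}$ is admissible for $\lambda_R$ (it is supported in $B_R$, $L^2$-normalized, in $\mathcal H^\sigma$), we get $\limsup_R \lambda_R \leq \lambda + \eps$. Letting $\eps \to 0$ completes the proof. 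The main obstacle, as indicated, is handling the nonlocal seminorm of the truncation $\eta_R w$: the Leibniz rule fails, and one must show the commutator-type error term vanishes, which is where the scaling property of $\phi_R$ together with $w\in L^2$ is essential.
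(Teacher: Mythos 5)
Your approach is genuinely different from the paper's. The paper observes that $\lambda_R$ is non-increasing so a limit $\lambda_\infty$ exists, then works directly with the eigenfunctions $\varphi_R$: normalizing $\|\varphi_R\|_{L^2}=1$, it proves a uniform $L^\infty$ bound via a maximum-principle argument (the max of $\varphi_R$ cannot be attained where $\alpha g > \lambda_\infty+1$), then uses H\"older regularity and the compact embedding $\mathcal H^\sigma \hookrightarrow L^2$ to extract a locally uniform limit $\varphi$ that is a positive $L^2$-normalized solution of $(-\Delta)^\sigma\varphi + \alpha g\varphi = \lambda_\infty\varphi$; since a positive eigenfunction can only belong to the principal eigenvalue, $\lambda_\infty = \lambda$. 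You instead attack the variational characterization directly: the easy inequality $\lambda_R \ge \lambda$ plus a density/truncation argument to get $\limsup_R\lambda_R \le \lambda$. Both routes are standard; the paper's is more PDE-flavored and automatically produces the limiting eigenfunction, yours is a purely variational argument that requires only the existence of $\varphi$ (or just a near-minimizer in $\mathcal H^\sigma$) as input. The genuine technical content you correctly identify is the failure of the Leibniz rule for the Gagliardo seminorm; your commutator estimate with the scaling $\phi_R(z) = R^{-2\sigma}\phi_1(z/R)$ is exactly the right device, and it even gives \emph{uniform} (not just pointwise) decay of $\phi_R$.

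There is, however, a small but real gap in the final inequality. From $(a+b)^2 \le 2a^2 + 2b^2$ you only get
\[
[\eta_R w]_{H^\sigma}^2 \le 2[w]_{H^\sigma}^2 + 2\int |w(z)|^2\phi_R(z)\,\dd z,
\]
so after sending $R\to\infty$ you obtain $\limsup_R [\eta_R w]_{H^\sigma}^2 \le 2[w]_{H^\sigma}^2$, \emph{not} $\le [w]_{H^\sigma}^2$ as you assert. With the factor $2$ the argument only shows $\limsup_R \lambda_R \le 2\lambda + \int g w^2$, which does not close. The fix is easy: either use Young's inequality with parameter, $(a+b)^2 \le (1+\delta)a^2 + (1+\delta^{-1})b^2$, send $R\to\infty$ for fixed $\delta$, then let $\delta\to 0$; or expand the square exactly and bound the cross term by Cauchy--Schwarz,
\[
[\eta_R w]_{H^\sigma}^2 \le [w]_{H^\sigma}^2 + 2[w]_{H^\sigma}\left(\int|w|^2\phi_R\right)^{1/2} + \int|w|^2\phi_R,
\]
whose last two terms vanish as $R\to\infty$. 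Either way you recover $\limsup_R [\eta_R w]_{H^\sigma}^2 \le [w]_{H^\sigma}^2$ and the proof is complete. As written, though, the step from the factor-$2$ inequality to the claimed $\limsup$ is a non sequitur and should be repaired.
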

\begin{proof}
First of all, we know that $\lambda_R$ is  non-increasing in $R$. Let
\[
 \lambda_0=\lim_{R\to\infty}\lambda_R. 
\]
Let $R_0$ be such that
\[
 \alpha g>\lambda_0+1\quad\mbox{in }\R^N\setminus B_{R_0}.
\]
Fixed $\varphi_R$ such that $\|\varphi_R\|_{L^2(\R^N)}=1$. We know that $\varphi_R>0$ in $B_R$. Then for $R>4R_0$, we have
\[
 \max_{B_{2R_0}} \varphi_R\le M,
\]
 where $M$ is independent of $R$. 

We claim that for all $R(>4R_0)$ sufficiently large such that $\lambda_R<\lambda_0+1$, there holds
\[
 \varphi_R\le M\quad\mbox{in }\R^N.
\]
Indeed, suppose there is  a large $R$ with $\max_{\R^N} (\varphi_{R}-M)>0$. Then the maximum is achieved at some point $\bar y\in B_{R}\setminus B_{2R_0}$. Thus, we have
\[
 (-\Delta)^\sigma (\varphi_{R_0}-M)(\bar y)>0.
\]
This implies
\[
 0> (\lambda_R-\alpha g(\bar x))\varphi_{R}(\bar y)=(-\Delta)^\sigma \varphi_{R}(\bar y)>0,
\]
which is a contradiction. 

Therefore, $\varphi_R$ is uniformly bounded. By the H\"older estimates (see, e.g., Proposition 2.9 in \cite{silvestre}), subject to a subsequence, $\varphi_R$ converges locally uniformly to a bounded nonnegative continuous function $\varphi$. Since $\varphi_R$ is also bounded in $\mathcal H^\sigma(\R^N)$, we have $\varphi\in\mathcal H^\sigma(\R^N)$ satisfies $\|\varphi\|_{L^2(\R^N)}=1$, and is a solution of
\[
 (-\Delta)^\sigma\varphi+\alpha g\varphi=\lambda_0\varphi.
\]
Hence, $\varphi$ is positive  in $\R^N$, and therefore, $\lambda=\lambda_0$. 
\end{proof}

The first eigenfunction in \eqref{eq:eigenfunction} decays at polynomial rates. This follows by very classical methods. It is essentially the same decay as the Bessel potential or the fractional heat kernel. See for example the appendix in \cite{Felmer}.

\begin{prop}\label{prop:eigenfunction estimate-polynomial}
Suppose $\varphi\in\mathcal H^\sigma(\R^N)$ is a nonnegative solution of \eqref{eq:eigenfunction}, then
\[
 \varphi(y)\le \frac{C}{|y|^{N+2\sigma}}\quad\mbox{in }\R^N.
\]
\end{prop}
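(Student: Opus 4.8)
The plan is to establish the polynomial decay bound for a nonnegative $\mathcal H^\sigma(\R^N)$-solution $\varphi$ of \eqref{eq:eigenfunction} by a comparison argument with an explicit supersolution built from the fractional heat kernel (equivalently the Bessel-type kernel for $(-\Delta)^\sigma + 1$). First I would note that since $\varphi \in \mathcal H^\sigma(\R^N) \subset L^2(\R^N)$ and $\alpha g$ is locally bounded, elliptic regularity for the fractional Laplacian (the H\"older estimates cited from \cite{silvestre}, bootstrapped) shows $\varphi$ is continuous and bounded on any ball; moreover, exactly as in the proof of Lemma \ref{prop:eigenfunction}, the maximum principle applied at a would-be interior maximum outside a large ball $B_{R_0}$ (where $\alpha g > \lambda + 1$) forces $\varphi$ to be globally bounded, say $\varphi \le M$ on $\R^N$. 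The strong maximum principle gives $\varphi > 0$ everywhere if $\varphi \not\equiv 0$ (and if $\varphi \equiv 0$ the estimate is trivial).

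Next I would choose $R_0$ large enough that $\alpha g(y) - \lambda \ge 1$ for $|y| \ge R_0$, so that on $\R^N \setminus B_{R_0}$ we have
\[
(-\Delta)^\sigma \varphi + \varphi \le (-\Delta)^\sigma \varphi + \alpha g\, \varphi - \lambda \varphi = 0 .
\]
Then I would produce a supersolution $w$ of $(-\Delta)^\sigma w + w \ge 0$ on $\R^N \setminus B_{R_0}$ with $w(y) \sim |y|^{-(N+2\sigma)}$ as $|y| \to \infty$ and $w \ge M \ge \varphi$ on $\overline{B_{R_0}}$. The natural candidate is (a constant multiple of) the kernel $\mathcal K$ of $(-\Delta)^\sigma + 1$, i.e.\ $\mathcal K = \int_0^\infty e^{-t} p_t \, dt$ where $p_t$ is the $\sigma$-stable heat kernel; it satisfies $(-\Delta)^\sigma \mathcal K + \mathcal K = \delta_0$, hence $(-\Delta)^\sigma \mathcal K + \mathcal K = 0$ away from the origin, and it is classical that $\mathcal K(y) \le C|y|^{-(N+2\sigma)}$ for $|y| \ge 1$ (and $\mathcal K$ is bounded below by a positive constant on $\overline{B_{R_0}}$). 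Taking $w = A\,\mathcal K$ with $A$ large enough that $A \mathcal K \ge M$ on $\overline{B_{R_0}}$, and using that $w > 0 = $ (the natural value of $\varphi$'s decay) works at infinity, the difference $w - \varphi$ satisfies $(-\Delta)^\sigma(w-\varphi) + (w-\varphi) \ge 0$ on $\R^N \setminus B_{R_0}$, is $\ge 0$ on $B_{R_0}$, and $\liminf_{|y|\to\infty}(w-\varphi)(y) \ge 0$; the comparison principle for the exterior problem for $(-\Delta)^\sigma + 1$ then yields $\varphi \le w \le C|y|^{-(N+2\sigma)}$.

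The one delicate point — and the main obstacle — is the nonlocality: the comparison argument for $(-\Delta)^\sigma$ on the exterior domain $\R^N \setminus B_{R_0}$ requires controlling the contribution to $(-\Delta)^\sigma(w-\varphi)(\bar y)$ coming from values inside $B_{R_0}$, so one genuinely needs $w - \varphi \ge 0$ on the whole complement region, not just a boundary inequality. This is handled cleanly by phrasing the argument as: if $\sup_{\R^N}(\varphi - w) > 0$, then since $\varphi - w \to 0$ (up to the decay of $\varphi$, which a priori we must first establish is at most, say, $o(1)$ — automatic from boundedness is not enough, so instead one runs the argument with $w$ slightly enlarged or uses that any positive maximum of $\varphi - w$ over $\R^N$ is attained, given $\varphi$ bounded and $w$ bounded below away from $0$ only near the origin; more robustly, apply the argument to $\varphi - (1+\eps)w$ and let $\eps \to 0$). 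At the attained maximum point $\bar y$, necessarily $\bar y \notin \overline{B_{R_0}}$, and $(-\Delta)^\sigma(\varphi - (1+\eps)w)(\bar y) \ge 0$ while the equation forces it to be $< 0$ there (strictly, using $\eps > 0$), a contradiction. Letting $\eps \to 0$ gives $\varphi \le w$, hence the claimed bound. Alternatively, I would simply cite the appendix of \cite{Felmer} for this standard decay estimate, as the paper already suggests, since the argument is entirely routine once the boundedness of $\varphi$ and the explicit kernel bounds are in hand.
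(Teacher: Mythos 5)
Your overall strategy matches the paper's: bound $\varphi$, observe $(-\Delta)^\sigma\varphi + \varphi \le 0$ outside a large ball where $\alpha g \ge \lambda + 1$, and compare with the Green function $G$ of $(-\Delta)^\sigma + 1$, which has the desired $|y|^{-(N+2\sigma)}$ decay. You also correctly identify the genuine difficulty — because the operator is nonlocal, you cannot argue on the exterior domain alone; you need a \emph{global} maximum point of $\varphi - w$ to be attained so that $(-\Delta)^\sigma(\varphi - w)(\bar y) > 0$ can be invoked. However, your proposed fix does not resolve this. At the stage of this argument you know only that $\varphi$ is bounded, \emph{not} that it decays (decay is what you are trying to prove). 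Replacing $w$ by $(1+\eps)w$ does not help: $(1+\eps)MG$ still tends to $0$ at infinity, so $\varphi - (1+\eps)MG$ can creep up toward $\sup\varphi$ along a sequence $|y|\to\infty$ without attaining its supremum at any finite point, and the maximum-principle step never fires. Likewise, the parenthetical claim that "any positive maximum of $\varphi - w$ over $\R^N$ is attained, given $\varphi$ bounded and $w$ bounded below away from $0$ only near the origin" is simply not true.

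The paper's proof closes this gap by adding an \emph{unbounded} perturbation: it sets $\eta(y) = |y|^\sigma$, checks that $(-\Delta)^\sigma\eta + \eta \ge 0$ for $|y|$ large, and compares $\varphi$ with $MG + \eps\eta$. Since $MG + \eps\eta \to +\infty$ as $|y| \to \infty$ while $\varphi$ is bounded, the function $\varphi - MG - \eps\eta$ tends to $-\infty$ and therefore attains a global maximum; if that maximum were positive it would occur outside $\overline B_{R_1}$, and evaluating $(-\Delta)^\sigma(\cdot) + (\cdot)$ at that point yields a strict contradiction. Letting $\eps\to 0$ then gives $\varphi \le MG$ and hence the polynomial bound. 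This perturbation-by-a-growing-barrier trick is the missing ingredient in your argument; without it the comparison does not go through. (Falling back to citing the appendix of \cite{Felmer} is reasonable, but since you tried to give the argument, the hole should be flagged.)
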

\begin{proof}
 Let $R_0$ be such that
\[
 \alpha g>\lambda+1\quad\mbox{in }\R^N\setminus B_{R_0}.
\]
Let $G$ be a Green function satisfying 
\[
 ((-\Delta_y)^\sigma+1)G=\delta_0.
\]
We know that $G$ is positive, radial, strictly decreasing in $|y|$, and satisfies
\[
G(y)\le \frac{C}{|y|^{N+2\sigma}}\quad\mbox{for }|y|\ge 1,
\]
where $C$ is a positive constant depending only on $N$ and $\sigma$. Let  $\eta(y)=|y|^\sigma$.
Then there exists $R_1\in [R_0,+\infty)$ such that
\[
 (-\Delta)^\sigma\eta(y)+\eta(y)\ge 0\quad\mbox{for all }|y|\ge R_1.
\]
We know from the proof of Lemma \ref{prop:eigenfunction} that $\varphi$ is a bounded function. Therefore, we can choose $M$ large enough to have 
\[
 MG(R_1)\ge\varphi(y) \quad\mbox{for all }|y|\le R_1.
\]
For every $\eps\in (0,1)$, we claim
\[
\varphi\le MG+\eps\eta \quad\mbox{in }\R^N.
\]
Suppose the contrary: $\max_{\R^N} (\varphi-MG-\eps\eta)>0 $ is achieved at some point $\bar y\in \R^N\setminus \overline B_{R_1}$. Then
\[
\begin{split}
 0&< (-\Delta)^\sigma(\varphi-MG-\eps\eta)(\bar y)+(\varphi-MG-\eps\eta)(\bar y)\\
&=(\lambda+1-\alpha g(\bar y))\varphi(\bar y)-\eps  \Big((-\Delta)^\sigma\eta(\bar y)+\eta(\bar y)\Big)\\
&\le 0,
\end{split}
\]
which is a contradiction. Therefore, by sending $\eps \to 0$, we have
\[
 \varphi(y)\le MG(y)\le \frac{C}{|y|^{N+2\sigma}}\quad\mbox{in }\R^N.
\]
This finishes the proof.
\end{proof}

\begin{prop}\label{prop:decay mixed}
If $u$ is a nonnegative bounded solution of the traveling wave equation \eqref{e:traveling-wave-mixed},
then we have
\[
 u(x,y)\le \frac{C}{|y|^{N+2\sigma}}\quad\mbox{in }\R^{N+1}.
\]
\end{prop}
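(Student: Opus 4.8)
The plan is to mimic the proof of Proposition~\ref{prop:exp in y}, replacing the exponential barrier by the polynomial one built from the fractional Green function $G$ as in Proposition~\ref{prop:eigenfunction estimate-polynomial}. First I would note that since $u$ is a nonnegative bounded solution of \eqref{e:traveling-wave-mixed} and, by \eqref{eq:condition-1-mix}, $b(x)=\int_{\R^N}u(x,z)K(z)\dd z$ is nonnegative, we have from the equation the differential inequality
\[
-c u_x - \Delta_x u + (-\Delta_y)^\sigma u + \alpha g(y) u \le u \quad\text{in }\R^{N+1}.
\]
Next, using \eqref{eq:condition g-mix}, choose $R_1\ge R_0$ large enough that $\alpha g(y)\ge \lambda+2$, say, for $|y|\ge R_1$, which will absorb the reaction term $u$ and a little more. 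The key point is that the $x$-variable is genuinely harmless here: the barrier can be taken independent of $x$ (unlike the exponential case in Proposition~\ref{prop:exp in y}, where the factor $(1+\eps x\arctan x)$ was needed to handle the drift $-cu_x$, but here we can simply use that $-c\partial_x$ and $-\Delta_x$ annihilate an $x$-independent function).

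The main step is then the comparison argument. Let $G$ be the Green function of $(-\Delta_y)^\sigma + 1$ on $\R^N$, which as recalled in the proof of Proposition~\ref{prop:eigenfunction estimate-polynomial} is positive, radially decreasing, and satisfies $G(y)\le C|y|^{-(N+2\sigma)}$ for $|y|\ge 1$, and let $\eta(y)=|y|^\sigma$, so that $(-\Delta_y)^\sigma\eta(y)+\eta(y)\ge 0$ for $|y|\ge R_1$ (enlarging $R_1$ if needed). Since $u$ is bounded, pick $M$ large so that $M G(R_1)\ge \sup u$ (here I use $G$ evaluated at radius $R_1$, abusing notation as in the cited proof), so that $u(x,y)\le MG(y)+\eps\eta(y)$ for all $x$ whenever $|y|\le R_1$. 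For $\eps\in(0,1)$ I claim $u(x,y)\le MG(y)+\eps\eta(y)$ on all of $\R^{N+1}$. If not, since $\eta\to+\infty$ and $u$ is bounded, the supremum of $u(x,y)-MG(y)-\eps\eta(y)$ over $\R^{N+1}$ is positive and — because it is constant in the variables where we have a strict sign and attained in $y$ — one shows it is attained at some $(\bar x,\bar y)$ with $|\bar y|>R_1$; at that point $\partial_x(\cdots)=0$, $-\Delta_x(\cdots)\ge 0$, and $(-\Delta_y)^\sigma$ applied to the difference is $\ge 0$ (evaluating the nonlocal operator at an interior maximum, using that $MG$ and $\eta$ absorb the relevant inequalities), whence
\[
0 \le -c\partial_x(\cdots) - \Delta_x(\cdots) + (-\Delta_y)^\sigma(\cdots) + (\cdots) \le (\lambda+1-\alpha g(\bar y))u(\bar y) - \eps\bigl((-\Delta_y)^\sigma\eta(\bar y)+\eta(\bar y)\bigr) \le 0,
\]
a contradiction (using $\alpha g(\bar y)\ge\lambda+2$). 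Sending $\eps\to0$ gives $u(x,y)\le MG(y)\le C|y|^{-(N+2\sigma)}$.

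The delicate point, and the one I would write most carefully, is the localization of the maximum point in the noncompact $(x,y)$-domain: one must be sure the positive supremum is actually attained (not merely approached along a sequence $|\bar x|\to\infty$ or $|\bar y|\to\infty$) so that the pointwise evaluation of the nonlocal operator is legitimate. Here the finite-energy hypothesis on $u$ and the bound $u(x,y)\to0$ as $x\to+\infty$ play no role; rather one argues that $u-MG-\eps\eta\to-\infty$ as $|y|\to\infty$ uniformly in $x$ (since $\eta$ blows up and $u$ is bounded), so the supremum over $y$ is attained on a fixed ball for each $x$; and then a translation-compactness argument in $x$ (as in Lemma~\ref{lem:v0 lower bound}, using that $u$ and its derivatives are locally bounded by parabolic/elliptic estimates) produces a limiting maximizer, which still satisfies the same inequality. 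Alternatively, and perhaps more cleanly, one can replace $\eta$ by $\eta(x,y)=|y|^\sigma$ but add a small $x$-confining term as in the exponential proof; I would choose whichever keeps the verification of $(-\Delta_y)^\sigma\eta+\eta\ge0$ shortest. Modulo this localization, the argument is a routine adaptation of Proposition~\ref{prop:eigenfunction estimate-polynomial}, so I would present it as such: \emph{``Arguing as in the proof of Proposition~\ref{prop:eigenfunction estimate-polynomial}, with the extra variable $x$ handled as in Proposition~\ref{prop:exp in y}, we obtain \ldots''}
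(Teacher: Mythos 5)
Your proposal is essentially sound, but your primary route and the paper's differ in how they localize the touching point in the unbounded $x$-direction. You assert that in the fractional case the barrier can be taken independent of $x$ because ``$-c\partial_x$ and $-\Delta_x$ annihilate an $x$-independent function''; note that the same is true in Proposition~\ref{prop:exp in y}, so that observation alone does not explain why the factor $(1+\eps x\arctan x)$ was used there. Its role (in both cases) is purely to make the barrier blow up as $|x|\to\infty$ so that a positive supremum of $u-w$ is attained at a finite point; it is not there to ``handle the drift'' but actually introduces extra drift and second-derivative terms which one then must dominate with $\alpha g$. To your credit, you recognize the localization issue in your last paragraph and propose two fixes: (a) a translation--compactness argument in $x$, and (b) reinstating the $x$-confining factor as in the exponential case. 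The paper takes route (b): it sets $w(x,y)=M(1+\eps x\arctan x)(G(y)+\eps\eta(y))$, verifies $-cw_x-\Delta_x w+(-\Delta_y)^\sigma w+\alpha g\,w - w>0$ for $|y|\ge R_0$ with $\alpha g>3|c|+c_0+3$ (where $c_0$ bounds $|(-\Delta)^\sigma(G+\eps\eta)|/(G+\eps\eta)$), evaluates at the interior maximum, and sends $\eps\to0$. Route (a) is also viable but requires an interior H\"older/Schauder estimate for the mixed operator $-c\partial_x-\Delta_x+(-\Delta_y)^\sigma$ and passage to the limit in the differential inequality, so it is genuinely more work than the one-line computation that (b) affords. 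Two minor slips in your verification: the traveling-wave equation contains no eigenvalue $\lambda$, so the displayed inequality should read $(1-\alpha g(\bar y))u(\bar x,\bar y)-\eps((-\Delta_y)^\sigma\eta(\bar y)+\eta(\bar y))$ (and you should write $u(\bar x,\bar y)$, not $u(\bar y)$); and the threshold $\alpha g\ge\lambda+2$ you quote is inherited from the eigenvalue estimate, whereas here any bound forcing $\alpha g-1$ to dominate the quotient $|(-\Delta)^\sigma(G+\eps\eta)|/(G+\eps\eta)$ (plus the $x$-drift terms if you use the confining factor) suffices. With those corrected, your argument matches the paper's when you take option~(b).
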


\begin{proof}
Let $\eta$ and $G$ be as the one in the proof of Proposition \ref{prop:eigenfunction estimate-polynomial}. For every $\eps\in (0,1)$,  we have
\[
 |\frac{(-\Delta)^\sigma (G+\eps\eta)(y)}{(G+\eps\eta)(y)}|\le  \frac{G+\eps|(-\Delta)^\sigma \eta(y)|}{G(y)+\eps\eta(y)}\le 1 + \frac{|(-\Delta)^\sigma \eta(y)|}{\eta(y)}\le c_0\quad\mbox{in }|y|\ge 1,
\]
for some positive constant $c_0$. 
Let $R_0>1$ be such that
\[
 \alpha g(y)>3|c|+c_0+3\quad\mbox{in }|y|\ge R_0.
\]
Since $u$ is a bounded function, we can choose $M$ large enough so that
\[
 MG(R_0)\ge u(x,y)\quad\mbox{in }|y|\le R_0.
\]
Let $w(x,y)=M(1+\eps x \arctan x)(G(y)+\eps \eta(y))$.
We claim
\[
 u(x,y)\le w(x,y)\quad\mbox{in }\R^{N+1}.
\]
If not, then $\max_{\R^{N+1}}(u-w)>0$ is achieved at some point $(\bar x, \bar y)$. It follows that $|\bar y|> R_0.$ Therefore,
\[
 0<-c(u-w)_x-\Delta_x (u-w)+(-\Delta_y)^\sigma (u-w)+\alpha g (u-w)+u-w
\]
On the other hand, we have
\[
\begin{split}
&-cw_x-\Delta_x w+(-\Delta_y)^\sigma w+\alpha g w-w\\
&=M\Big(\eps\big(-c(\arctan x+\frac{x}{1+x^2})-\frac{2}{(1+x^2)^2}\big)\\
&\quad\quad +(\alpha g(y)-\frac{(-\Delta)^\sigma (G+\eps\eta)(y)}{(G+\eps\eta)(y)}-1)(1+\eps x \arctan x)\Big)(G(y)+\eps \eta(y))\\
&\ge C\left(-3|c|-2+\alpha g(y)-c_0-1\right)(G(y)+\eps \eta(y))\\
&> 0.
\end{split}
\]
Therefore, at $(\bar x, \bar y)$, we have
\[
-c(u-w)_x-\Delta_x (u-w)+(-\Delta_y)^\sigma (u-w)+\alpha g (u-w)+u-w<0,
\]
which is a contradiction. Thus, our claim holds. By sending $\eps \to 0,$ we derive\[
 u(x,y)\le \frac{C}{|y|^{N+2\sigma}}\quad\mbox{in }\R^{N+1}.
\]
This completes the proof.
\end{proof}
\comment{ 
\begin{proof}
We know from \cite{Getoor} that for $z\in \R^N, c(N,\sigma)=\frac{2^{-2\sigma}\Gamma(N/2)}{\Gamma(\frac{N+2\sigma}{2})\Gamma(1+\sigma)}$, 
\[
\phi_0(z)=c(N,\sigma)(1-|z|^2)^\sigma_+,
\]
it satisfies
\[
\begin{cases}
(-\Delta_z)^\sigma u&=1 \quad\mbox{in }B_1:=\{y\in\R^N: |y|< 1\},\\
u&=0 \quad\mbox{in }B_1^c.
\end{cases}
\]
For $x=(x_1,x')$, we let
\[
\varphi(x,y)=\frac 12 (u(x_1,x',y)-u(-x_1,x',y))
\]
and
\[
\psi(x,y)=M(2(c(N,\sigma)-\phi_0(y))+|x'|^2+mx_1(2-x_1))+\frac{N}{2} x_1(1-x_1)
\]
where $M=\|u\|_{L^\infty(Q^{(x)}_1\times\R^N)} /c(N,\sigma)$ and $N=\|f\|_{L^\infty(Q_1)}$. Thus, we have
\[
(-\Delta_x) \psi+(-\Delta_y)^\sigma \psi=N\quad\mbox{in } Q_1.
\]
Moreover, it is straightforward to check that when $x_1=0$ or $x_1=1$ or $|x_i|=1$ for some $i=2,\dots,m$ or $|y|\ge 1$, we have
\[
\pm \varphi(x_1,x',y)\le\psi(x_1,x',y).
\]
By the maximum principle, we have
\[
|\varphi|\le \psi\quad\mbox{in }(\{x_1>0\}\cap Q^{(x)}_1)\times \{y\le 1\}.
\]
Setting $x'=0$ and $y=0$, dividing $x_1$ and then sending $x_1\to 0$, we have
\[
|\nabla_{x_1} u(0,0)|\le 2M+N/2.
\]
The results follows in the corresponding way for $x_2,\dots, x_m$. This finishes the proof.
\end{proof}
}

From Proposition \ref{prop:decay mixed} we get the decomposition \eqref{e:decomposition}. Then for those traveling wave solutions with finite energy, we can split \eqref{e:traveling-wave-mixed} into a sequence of equation as in \eqref{e:equations-for-vi}.  Owing to the assumptions on $g$ and $K$, the terms $gu$ and $Ku$ are decaying faster than $|y|^{-N}$. Now we can conclude from the proof of Theorem \ref{t:intro-stationary} that for $0<\alpha<\tilde \alpha$, where $\tilde \alpha$ is uniquely determined by $\lambda(\tilde \alpha)=1$ in \eqref{eq:eigenfunction}, we have

\begin{thm} \label{t:intro-stationary-mixed}
There exists a positive number $c^*$ so that
\begin{itemize}
\item If $0\le c < c^*$, there exists only one positive bounded solution $u$ of \eqref{e:traveling-wave-mixed} with finite energy. Moreover, the solution is constant in $x$.
\item If $c \geq c^*$, there exists a unique non negative bounded solution $u$ of \eqref{e:traveling-wave-mixed} with finite energy such that \eqref{e:traveling-wave-infinity} holds.
\end{itemize}
\end{thm}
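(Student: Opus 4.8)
The plan is to follow exactly the scheme already carried out in Sections \ref{sec:reduction} and the non-existence subsection for the local equation \eqref{e:traveling-wave}, replacing every use of Proposition \ref{prop:exp in y} by Proposition \ref{prop:decay mixed} (polynomial rather than exponential decay in $y$). First I would fix $c^* = 2\sqrt{1-\lambda}$, where $\lambda = \lambda(\alpha)$ is the principal eigenvalue of $\mathcal L_\sigma$ from \eqref{eq:eigenfunction}; by Lemma \ref{prop:eigenfunction} and the monotone, continuous dependence of $\lambda$ on $\alpha$ this is a finite positive number precisely when $0<\alpha<\tilde\alpha$. Since the embedding $\mathcal H^\sigma(\R^N)\hookrightarrow L^2(\R^N)$ is compact, Lemma \ref{l:spectral} gives an orthonormal eigenbasis $\{\psi_i\}$ with eigenvalues $0<\lambda=\lambda_0<\lambda_1\le\cdots\to\infty$, $\psi_0>0$, and every other $\psi_i$ changes sign. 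Given a nonnegative bounded finite-energy solution $u\not\equiv 0$ of \eqref{e:traveling-wave-mixed}, Proposition \ref{prop:decay mixed} and the growth assumptions \eqref{eq:condition-1-mix}, \eqref{eq:condition g-mix} (so that $g u$ and $K u$ decay faster than $|y|^{-N}$, hence are integrable against the $\psi_i$) justify the expansion $u(x,y)=\sum_i v_i(x)\psi_i(y)$ with $v_i(x)=\int u(x,z)\psi_i(z)\,\mathrm dz$, and the finite-energy hypothesis lets us split \eqref{e:traveling-wave-mixed} into the scalar equations
\[
-c\,\partial_x v_i - \partial_{xx} v_i + \lambda_i v_i = (1-b(x))\,v_i,\qquad b(x)=\int_{\R^N} u(x,z)K(z)\,\mathrm dz>0,
\]
which are identical to \eqref{e:equations-for-vi}. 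From here the proofs of Lemma \ref{l:finite summation} (truncating to $i\le J$ with $\lambda_i\le 1$), Lemma \ref{l:quotient is bounded} ($v_i/v_0\in L^\infty$, using that the limit profile is orthogonal to $\psi_0$ and so changes sign), Lemmas \ref{l:term-exp-decay-low}–\ref{l:term-exp-decay-upper} and \ref{l:one-term}, and Lemma \ref{lem:v0 lower bound} and Theorem \ref{thm:liouville with c}, all go through verbatim, since they operate only on the ODEs \eqref{e:equations-for-vi} and on the scalar quantities $v_i$, $b$; the only input from the $y$-variable is that $b(x)\to 0$ as $x\to+\infty$ when $u(x,\cdot)\to 0$, which follows from Proposition \ref{prop:decay mixed} and dominated convergence.

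With that reduction in hand, for $c\ge c^*$ Lemma \ref{l:one-term} forces $u(x,y)=v_0(x)\psi_0(y)$; normalizing $v=\mu^{-1}v_0$ with $\mu=(1-\lambda_0)(\int\psi_0 K)^{-1}$ yields the classical Fisher–KPP problem \eqref{eq:reduce to KPP}, and the boundary analysis at $\pm\infty$ in the proof of Theorem \ref{thm:existence and uniqueness} (integrating the ODE, comparing $\liminf$ and $\limsup$ at interlaced extrema) gives $v\to 1$ at $-\infty$; existence and uniqueness then follow from \cite{KPP}. For $0\le c<c^*$, Lemma \ref{lem:v0 lower bound} gives $\inf_x v_0>0$ — here the one place needing a small adaptation is the estimate
\[
\kappa e^{\kappa R}\int_{|y|\le R} u(\bar x_k,y)\,\mathrm dy \ge \int_{|y|\le R} u(\bar x_k,y)K(y)\,\mathrm dy \ge \frac{\delta}{2},
\]
which should be replaced by the bound coming from \eqref{eq:condition-1-mix}, namely $C_0 R^{\kappa_1}\int_{|y|\le R} u \ge \delta/2$, everything else being unchanged — and then Theorem \ref{thm:liouville with c} shows $w_i=v_i/v_0\equiv 0$ for $i\ge 1$ via the maximum-principle structure of \eqref{eq:wi}, so again $u=v_0\psi_0$ with $v_0$ solving a Fisher–KPP ODE of speed $c<2\sqrt{1-\lambda_0}$, whence $v_0\equiv\mu$ and $u\equiv V=\mu\psi_0$ is constant in $x$.

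The steps are thus essentially bookkeeping once the two analytic ingredients — Proposition \ref{prop:eigenfunction estimate-polynomial} (eigenfunction decay) and Proposition \ref{prop:decay mixed} (solution decay) — are in place, and those have already been proved above. The main obstacle, and the only genuinely new point, is ensuring that the polynomial decay $|y|^{-(N+2\sigma)}$ together with $\kappa_1<2\sigma$ is strong enough to make the spectral decomposition and the term-by-term splitting of \eqref{e:traveling-wave-mixed} rigorous: one needs $u(x,\cdot)$, $g(y)u(x,\cdot)$ and $K(y)u(x,\cdot)$ all in $L^2$ in $y$ (uniformly on compact $x$-intervals) so that pairing with $\psi_i$ commutes with $\partial_x$, $\partial_{xx}$ and $(-\Delta_y)^\sigma$; since $g,K = O(|y|^{\kappa_1})$ with $\kappa_1<2\sigma < N+2\sigma$, the products decay like $|y|^{-(N+2\sigma-\kappa_1)}$, which is square-integrable, and the finite-energy hypothesis supplies the $H^\sigma_y$-control needed to pass $(-\Delta_y)^\sigma$ onto $\psi_i$. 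I expect verifying this integrability and the attendant Fubini/interchange arguments to be where the care is required; everything downstream is a transcription of the $\sigma=1$ proof.
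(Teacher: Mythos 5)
Your proof is correct and follows essentially the same route as the paper: fix $c^*=2\sqrt{1-\lambda}$, use Propositions \ref{prop:eigenfunction estimate-polynomial} and \ref{prop:decay mixed} as the analytic inputs replacing Propositions \ref{prop:eigenfunction exp in y} and \ref{prop:exp in y}, invoke the finite-energy hypothesis and the growth conditions \eqref{eq:condition-1-mix}--\eqref{eq:condition g-mix} to justify the spectral splitting into the ODEs \eqref{e:equations-for-vi}, and then transcribe the scheme of Section~\ref{sec:reduction} and Theorem~\ref{thm:liouville with c}. Your observation that the bound $\kappa e^{\kappa R}\int_{|y|\le R}u\ge\delta/2$ in Lemma~\ref{lem:v0 lower bound} must be replaced by $C_0R^{\kappa_1}\int_{|y|\le R}u\ge\delta/2$, and that the tail estimate now uses $\kappa_1<2\sigma$, correctly identifies the only genuine adaptation the paper leaves implicit.
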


\section{Asymptotic speed of propagation}\label{sec:asp}

We consider the Cauchy problem \eqref{eq:non local interactions} with $u(0,x,y)= u_0(x,y)$, where $u_0$ is smooth, with compact support in $\R^{m+N}$,  
$u_0\ge0$, and $u_0\not\equiv 0$. Let $C_0$ be a positive constant such that 
\begin{equation}\label{eq: bound u0}
0\le u_0\le C_0 \psi_0\quad\mbox{in }\R^{m+N},
\end{equation}
where $\psi_0$ is the first eigenfunction in Lemma \ref{l:spectral}. The function $e^{(1-\lambda_0)t} \psi_0$ is a solution of the linear equation
\begin{equation}\label{eq:linear}
\partial_t \psi-\Delta \psi+\alpha g \psi=\psi,
\end{equation}
where $\lambda_0$ is the first eigenvalue in Lemma \ref{l:spectral}. By the comparison principle, standard parabolic equation estimates and fixed point arguments, there exists a unique solution $u$ of \eqref{eq:non local interactions} such that $u(0,x,y)= u_0(x,y)$ for all time $0<t<\infty$, $u$ is smooth in $(0,+\infty)\times\R^m\times\R^N$ and satisfies
\begin{equation}\label{eq:bounded by t}
 0\le u(t,x,y)\le C_0 e^{(1-\lambda_0)t} \psi_0(y)\quad\mbox{for all }(t,x,y)\in (0,+\infty)\times\R^m\times\R^N,
\end{equation}
where $C_0$ is the constant in \eqref{eq: bound u0}.

We are interested in the long time behavior of the solution $u$ as $t\to\infty$. In this section, we are going to prove Theorem \ref{t:into-asp} on the asymptotic speed of propagation, which is the main result of this section. To prove Theorem \ref{t:into-asp}, we proceed in two steps. We first prove the weaker version in Theorem \ref{thm:asp} below. It consists in showing that for large time, for every $y$, the solution $u(t,x,y)$ is bounded from below by a positive constant on the sets  $\{ |x|\leq ct\}$ when $c<c^*$. Then, in Theorem \ref{convergence} we make use of the decomposition as in Section \ref{sec:reduction} to obtain the more precise behavior that $u$ converges to $V(y)$ on these sets. This yields Theorem \ref{t:into-asp}.

Similar spreading rates for solutions of the local equation \eqref{eq:BC} were obtained in \cite{BC}.  As usual, the bound \eqref{eq:up bound speed} in Theorem \ref{thm:asp} for $c>c^*$ follows immediately from comparing the solution of \eqref{eq:non local interactions} and the solution of the linear equation \eqref{eq:linear}.

However, because of the lack of general comparison principles, the proof of the bound \eqref{eq:lower bound speed} in Theorem \ref{thm:asp} for $c<c^*$ is quite different from that in \cite{BC}. In this step, we shall adapt some compactness arguments used by Hamel and Ryzhik in \cite{HR}. The general idea is the following. If $u(t,x,y_0)$ is small for $|x|< c^*t$ and some point $y_0$, then $\int_{\R^N} u(t,x,y)K(y)\dd y$ will be small. Hence, the behavior of $u$ should be similar to that of the solution of the linear equation \eqref{eq:linear}, which, however, is not small for $|x|< c^*t$.

To employ the compactness arguments, we first need to establish a uniform upper bound estimate for $u$, which, unlike \eqref{eq:bounded by t}, will be independent of the time $t$.

\subsection{A priori estimates}

To obtain the uniform upper bound of $u$, in addition to \eqref{eq:condition-1}, we assume
\begin{equation}\label{eq:assumption-2}
K(y)\ge K_1 \mbox{ for } |y|\le R_0+2,
\end{equation}
where $K_1$ is a positive constant, and $R_0$ is chosen such that
\begin{equation}\label{eq:R0}
\alpha g(y)\ge 1\quad\mbox{for all } |y|\ge R_0.
\end{equation}

As an intermediate step, we show the following auxiliary uniform estimate.
\begin{lemma}\label{lem:bound-1}
There exists a positive constant $M_1$ depending only on $C_0$, $K_1$, $\alpha$ and $g$ such that 
\[
\int_{B_1} u(t,x,y+s)\dd s\le M_1\quad\mbox{for all }(t,x,y)\in (0,+\infty)\times\R^m\times\R^N,
\]
where $B_1$ is the unit ball centered at the origin in $\R^N$.
\end{lemma}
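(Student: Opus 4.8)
The goal is a uniform-in-time bound on $\int_{B_1}u(t,x,y+s)\dd s$. The natural strategy is to find a suitable function that, on the region where $u$ is large (in the weighted-average sense above), acts as a supersolution absorbing the reaction term, and then use a comparison/maximum principle adapted to the nonlocal structure. The plan is to exploit the fact that on the set $\{|y|\le R_0\}$ the selection term $\alpha g(y)u$ is weak, but the competition term $-\big(\int K u\big)u$ becomes strongly damping precisely when the average $\int_{B_1}u$ is large, because of the lower bound $K\ge K_1$ on $|y|\le R_0+2$ in \eqref{eq:assumption-2}.

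First I would reduce to controlling $u$ at points with $|y|\le R_0$, since for $|y|\ge R_0$ one has $\alpha g(y)\ge 1$ by \eqref{eq:R0}, so the linear operator $-\Delta+\alpha g-1$ is nonnegative there and $u$ on the region $|y|\ge R_0$ can be controlled (via the maximum principle over $|y|\ge R_0$) by its boundary values on $|y|=R_0$ together with the already-known time-dependent bound \eqref{eq:bounded by t} — the point being that a supersolution bound propagates inward. Next, for a point $(t_0,x_0,y_0)$ with $|y_0|\le R_0$, I would consider the quantity $m(t)=\sup_{x,\,|y|\le R_0}u(t,x,y)$ (or a mollified/localized variant) and argue by a dichotomy: either $m(t)$ stays below a fixed threshold $M_0$, or, on any interval where $m(t)\ge M_0$, the average $\int_{\R^N}Ku\,\dd z$ at the near-maximizing points is bounded below by a multiple of $m(t)$ — using $K\ge K_1$ on $|y|\le R_0+2$ and a Harnack/gradient estimate (the gradient bound $|\nabla u|\le Ce^{-\gamma|y|}$-type interior estimates from parabolic regularity) to say that if $u$ is large at one point then it is comparably large on a unit ball around it, hence $\int_{B_1(y_0)}u\gtrsim m(t)$, giving $\int Ku\gtrsim K_1 m(t)$. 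Then at an interior maximum of $u$ over $\{|y|\le R_0\}$ the equation gives $u_t \le (1-\int Ku)u \le (1 - cK_1 m(t))\,u \le (1-cK_1 m(t))m(t)$, which is negative once $m(t)$ exceeds $1/(cK_1)$; a standard ODE comparison for $m(t)$ then forces $\limsup_t m(t)\le \max\{M_0, 2/(cK_1)\}$, and combined with \eqref{eq: bound u0} at $t=0$ this yields a uniform bound $m(t)\le M_1'$ for all $t$, from which $\int_{B_1}u(t,x,y+s)\dd s\le M_1$ follows (for $|y|$ large using the first reduction).

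The main obstacle is making the "interior maximum" argument rigorous despite two difficulties: the supremum over the unbounded $x$-variable need not be attained, and the region $\{|y|\le R_0\}$ has a boundary where the comparison leaks. For the first issue I would use the standard device of translating in $x$ and extracting locally uniform limits of $u(t,x+x_n,y)$ (the parabolic estimates give the needed compactness), so that a maximizing sequence produces a limiting solution on which the maximum is attained — this is exactly the Hamel–Ryzhik-type compactness the authors flag. For the boundary-of-$\{|y|\le R_0\}$ issue I would instead work with the genuinely global quantity: build a comparison supersolution of the form $w(t,y) = A(t)\psi_0(y) + B$ or $w=A(t)\varphi(y)$ where $\varphi$ grows like $e^{-\gamma|y|}$ is replaced by a function comparable to $1$ on $|y|\le R_0$ and still a supersolution of $-\Delta+\alpha g-1$ for $|y|\ge R_0$ (using \eqref{eq:R0}), choose $A(t)$ solving the logistic-type ODE $A' = (1-cK_1 A)A$ driven by the lower bound on the competition integral, and verify $u\le w$ by a maximum-principle argument valid on all of $\R^m\times\R^N$; the only place the nonlocal term enters is through the already-established lower bound $\int Ku \gtrsim K_1 \inf_{|y|\le R_0,x} (\text{comparable quantity})$, which is monotone enough to close the comparison. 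I expect verifying this one-sided bound on the nonlocal term — that $u$ being "large somewhere" forces the integral $\int K u$ to be large at the relevant point — to be the delicate step, and I would handle it with the parabolic Harnack inequality plus the explicit lower bound on $K$ near the origin.
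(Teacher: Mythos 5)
Your proposal contains a genuine gap at its central step. You want to show that if $u(t_0,x_0,y_0)$ is large (say near $m(t_0)=\sup_{x,\,|y|\le R_0}u$), then the integral $\int_{B_1(y_0)}u(t_0,x_0,\cdot)\dd z$ is comparably large, invoking a Harnack inequality or an interior gradient estimate. But the constant in either of these estimates depends on the $L^\infty$ norm of the zero-order coefficient in the equation $u_t-\Delta u+(\alpha g(y)+\int_{\R^N}Ku\,\dd z-1)u=0$, and the term $\int Ku$ is precisely the quantity you have not yet bounded uniformly in $t$. The one-sided bound $\int Ku\ge 0$ (which gives you a subsolution of the pure linear heat-type equation, hence a local maximum principle of the form $\sup\le C\cdot\text{space-time average}$) does \emph{not} give the same-time-slice comparability $\inf_{B_1}u(t_0,\cdot)\gtrsim \sup_{B_1}u(t_0,\cdot)$; for that you need a two-sided coefficient bound. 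The only available two-sided bound is the $t$-dependent one from \eqref{eq:bounded by t}, which makes the Harnack constant $t$-dependent and destroys uniformity. Your alternative device (a supersolution $w=A(t)\varphi(y)$ with $A$ solving a logistic ODE driven by a lower bound on $\int Ku$) suffers from the same circularity: the asserted lower bound ``$\int Ku\gtrsim K_1\cdot$(sup of $u$ near the maximum)'' is again the Harnack-type statement you cannot yet justify.

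The paper's proof sidesteps this entirely, and the key idea you are missing is to run the maximum-principle argument on the averaged quantity itself. Set $v(t,x,y)=\int_{B_1(y)}u(t,x,s)\dd s$; it satisfies the closed-form inequality
\[
v_t-\Delta v+\alpha\int_{B_1(y)}g\,u\,\dd s = v\Bigl(1-\int_{\R^N}Ku\,\dd z\Bigr).
\]
Now the link between the pointwise value of $v$ and the nonlocal integral is \emph{automatic}: if $v$ attains a large maximum $M_1$ at $(t_0,x_\infty,y_\infty)$ (obtained by the translation-and-compactness trick, which you do have), then at that point $v_t\ge 0$, $\Delta v\le 0$, so the right-hand side must be $\ge 0$, forcing $\int Ku\le 1$; at the same time the left-hand side inequality $\alpha\int_{B_1(y_\infty)}g\,u<\int_{B_1(y_\infty)}u$ localizes $y_\infty$ to $|y_\infty|\le R_0+1$, whence $K\ge K_1$ on $B_1(y_\infty)$ and $K_1M_1=K_1\int_{B_1(y_\infty)}u\le\int Ku\le 1$, contradicting $M_1>1/K_1$. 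No Harnack or gradient estimate is needed because $v$'s pointwise maximum \emph{is} a ball-average of $u$, which is exactly what the nonlocal term sees. You have the right ingredients (the lower bound $K\ge K_1$ near the origin, the role of $R_0$, the translation/compactness to realize the sup), but proving a pointwise bound on $u$ first and only then passing to averages is the wrong order; the paper first bounds $v$ and only afterward (in the next lemma) uses the local maximum principle to deduce the pointwise bound on $u$.
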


\begin{proof}
Let
\[
 v(t,x,y)=\int_{B_1} u(t,x,y+s)\dd s=\int_{B_1(y)} u(t,x,s)\dd s,
\]
where $B_1(y)$ is the ball in $\R^N$ with radius $1$ and center $y$. Then
\begin{equation}\label{eq:vn integral}
 v_t-\Delta v+\alpha\int_{B_1(y)} g(s) u(t,x,s)\dd s=v\left(1-\int_{\R^N}u(t,x,z)K(z)\dd z\right).
\end{equation}
By \eqref{eq:bounded by t}, we have
\[
0\le v(t,x,y)\le |B_1|C_0 e^{(1-\lambda_0)t}\|\psi_0\|_{L^\infty(\R^N)}.
\]
Let
\[
M_1=\max(1/K_1, |B_1|C_0 e^{1-\lambda_0}\|\psi_0\|_{L^\infty(\R^N)})+1.
\]
We are going to show 
\[
v(t,x,y)< M_1\quad\mbox{for all }(t,x,y)\in (0,+\infty)\times\R^m\times\R^N.
\]
Suppose there is $t_0$ such that
\[
 \|v(t_0,\cdot)\|_{L^\infty(\R^{m+N})}=M_1,\quad\mbox{and}\quad \|v(t,\cdot)\|_{L^\infty(\R^{m+N})}<M_1\quad\mbox{for }t<t_0.
\]
Then $t_0\ge 1$, and there exists a sequence $\{(x_n,y_n)\}$ such that $v(t_0,x_n,y_n)\to M_1$ as $n\to\infty$. From \eqref{eq:bounded by t} we infer that $\{y_n\}$ is a bounded sequence.  We define the translations (in $x$)
\[
 u_n(t,x,y)=u(t,x+x_n,y)\quad\mbox{and}\quad  v_n(t,x,y)=v(t,x+x_n,y),
\]
which also satisfy \eqref{eq:non local interactions} and \eqref{eq:vn integral}, respectively. By \eqref{eq:bounded by t}, parabolic equation estimates and dominated convergence theorem, up to a subsequence, $y_n\to y_\infty$, $\{u_n\}$ converges locally uniformly to $u_\infty$ which satisfies \eqref{eq:non local interactions}, and $\{v_n\}$ converges locally uniformly to $v_\infty$ which satisfies the equation \eqref{eq:vn integral} associated with $u_\infty$. Moreover,
\[
 0\le v_\infty(t,x,y)\le M_1 \quad\mbox{ in }(0,t_0)\times\R^m\times\R^{N}, \quad v_\infty(t_0,0,y_\infty)=M_1,
\]
and thus
\[
 \partial_t v_\infty(t_0,0,y_\infty)\ge 0,\quad \Delta v_\infty(t_0,0,y_\infty)\le 0.
\]
This implies
\[
\alpha\int_{B_1(y_\infty)} g(s) u_\infty(t_0,0,s)\dd s \le v_\infty(t_0,0,y_\infty)\left(1-\int_{\R^N}u_\infty(t_0,0,z)K(z)\dd z\right).
\]
Hence,
\[
\alpha\int_{B_1(y_\infty)} g(s) u_\infty(t_0,0,s)\dd s < v_\infty(t_0,0,y_\infty)=\int_{B_1(y_\infty)} u_\infty(t_0,0,s)\dd s
\]
and
\[
\int_{\R^N}u_\infty(t_0,0,z)K(z)\dd z\le 1.
\]
Thus, by the choice of $R_0$ in \eqref{eq:R0}, we have
\[
 |y_\infty|\le R_0+1.
\]
From the assumption \eqref{eq:assumption-2},
we derive
\[
K_1M_1=K_1 v_\infty(t_0,0,y_\infty)=K_1 \int_{B_1(y_\infty)} u_\infty(t_0,0,z)\dd z \le \int_{B_1(y_\infty)} u_\infty(t_0,0,z)K(z)\dd z\le 1.
\]
This contradicts the choice of $M_1$. Thus, we proved that no such $t_0$ exists, from which the lemma follows.
\end{proof}

We can now derive a uniform bound on $u$ independently of the time $t$.

\begin{lemma}\label{lem:uniform bound}
There exists a positive constant $M_2$ depending only on $C_0$, $K_1$, $\alpha$ and $g$ such that  
\[
 0\le u(t,x,y)\le M_2\quad\mbox{in }(0,+\infty)\times\R^m\times\R^{N}.
\]
\end{lemma}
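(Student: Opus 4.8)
The plan is to bootstrap from the local-in-$y$ integral bound of Lemma~\ref{lem:bound-1} to a genuine pointwise bound, uniform in time, by exploiting the fact that outside a fixed ball the zeroth-order coefficient $\alpha g(y)-1$ is positive and the competition term only helps. Write the equation for $u$ as
\[
u_t-\Delta u + \big(\alpha g(y)-1\big)u = -\Big(\int_{\R^N}K(z)u(t,x,z)\dd z\Big)u \le 0 ,
\]
so $u$ is a bounded (by \eqref{eq:bounded by t} for each fixed horizon) subsolution of the linear parabolic operator $\partial_t -\Delta + (\alpha g-1)$. The idea is then purely local: fix $(t_0,x_0,y_0)$ with $t_0\ge 2$ and estimate $u(t_0,x_0,y_0)$ by a parabolic mean-value / local maximum principle argument on the unit cylinder $Q=(t_0-1,t_0]\times B_1(x_0)\times B_1(y_0)$, controlling the right-hand side by the $L^1$ bound $M_1$ from Lemma~\ref{lem:bound-1} (which already gives $\int_{B_1(y_0)}u(t,x,s)\dd s\le M_1$ for \emph{every} $t,x$, hence an $L^1_{y}$ bound locally uniform in $(t,x)$).

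Concretely, first I would handle the region $|y_0|\ge R_0+1$. There $\alpha g(y)-1\ge 0$ on $B_1(y_0)$, so $u$ is a subsolution of the plain heat equation on $Q$ with zero-order term of favorable sign; a standard local boundedness estimate for subsolutions of uniformly parabolic equations (De~Giorgi--Nash--Moser, or simply the explicit heat-kernel representation against a cutoff) gives
\[
u(t_0,x_0,y_0)\le C\Big(\int_{t_0-1}^{t_0}\!\!\int_{B_1(x_0)\times B_1(y_0)} u \Big)\le C\,\|u(\cdot)\|_{L^\infty_{t,x}L^1_y}\,|B_1^{x}| \le C M_1,
\]
where $C$ depends only on $N,m$. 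For this I only need the $L^1$-in-$y$ bound, the $x$-direction being compact in the cylinder; no bound in $x$ at infinity is needed because the estimate is interior. For the remaining region $|y_0|< R_0+1$, which is a fixed bounded set in $y$, I combine the same local estimate (now the zero-order term $\alpha g -1$ is merely bounded on $B_1(y_0)$, contributing a harmless multiplicative constant $e^{\sup_{|y|\le R_0+2}|\alpha g-1|}$ via Duhamel/Gronwall) with the fact that the relevant $y$-slab is compact, so again $u(t_0,x_0,y_0)\le C(R_0,\alpha,g)\,M_1$. Taking $M_2$ to be the larger of the two constants, and noting $u\le C_0 e^{1-\lambda_0}\|\psi_0\|_\infty$ already on $(0,2]$ by \eqref{eq:bounded by t}, yields the claimed uniform bound on all of $(0,+\infty)\times\R^m\times\R^N$; dependence of $M_2$ is only on $C_0,K_1,\alpha,g$ (through $R_0$ and $M_1$), as asserted.

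The main obstacle is making the ``local maximum principle'' step clean given that the natural a~priori information is only an $L^1$ bound in the $y$ variable and an $L^\infty$-but-growing-in-$t$ bound globally. One must be careful that the local estimate is applied on a cylinder of fixed size at time $t_0\ge 2$ so that the a~priori $L^\infty$ bound on the (bounded in $t$) parabolic boundary is finite, and then argue that the resulting constant does not degenerate; the competition term being a nonpositive contribution to the equation is what rescues this, since it lets us drop it and treat $u$ as a subsolution with no uncontrolled forcing. An alternative, perhaps cleaner, route avoiding elliptic regularity machinery is the Duhamel representation: writing $u(t_0,\cdot) = e^{-(t_0-1)(-\Delta+\alpha g-1)}u(1,\cdot) - \int_1^{t_0} e^{-(t_0-\tau)(-\Delta+\alpha g-1)}\big(b_\tau u\big)\dd\tau$ with $b_\tau\ge0$, one drops the (nonpositive) integral term and estimates the heat-semigroup-with-potential acting on $u(1,\cdot)$; since the potential is bounded below by $0$ outside $B_{R_0}$ and the kernel is Gaussian, $e^{-s(-\Delta+\alpha g-1)}$ has a kernel pointwise dominated by $e^{s\|(\alpha g-1)^-\|_\infty^{\mathrm{loc}}}$ times the heat kernel, which integrated against the $L^1_y$-bounded, $L^\infty$-bounded-on-$\{t=1\}$ datum again produces a bound of the form $CM_1 + C e^{1-\lambda_0}\|\psi_0\|_\infty$ uniformly in $(t_0,x_0,y_0)$. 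I would present whichever of these two is shortest; both reduce the problem to Lemma~\ref{lem:bound-1} plus elementary heat-kernel estimates.
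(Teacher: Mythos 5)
Your primary route is correct and is genuinely more direct than the paper's. You drop the nonpositive competition term to see $u$ as a nonnegative subsolution of $\partial_t-\Delta+(\alpha g-1)$, and then apply a parabolic local maximum principle on a unit cylinder around an \emph{arbitrary} point $(t_0,x_0,y_0)$ with $t_0\ge 2$, controlling the $L^1$ norm on the cylinder via Lemma~\ref{lem:bound-1}. The key observation that makes this work without any localization of the extremum is that the local-max-principle constant (Lemma~\ref{lem:local max principle}) depends only on $\|c^-\|_{L^\infty}$, and here $c^- = (\alpha g-1+\int Ku)^-\le(\alpha g-1)^-\le 1$ uniformly in $(t,x,y)$, so in fact your two-case split by $|y_0|\ge R_0+1$ versus $|y_0|<R_0+1$ is unnecessary (both give the same uniform constant). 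The paper instead introduces a first hitting time $t_0$ of level $M_2$, translates in $x$ to produce a limiting profile $u_\infty$ attaining its supremum at some $(t_0,0,y_\infty)$, and uses the pointwise relations $\partial_t u_\infty\ge 0$, $\Delta u_\infty\le 0$ there to deduce $|y_\infty|\le R_0$ before applying the same local maximum principle on a fixed cylinder around $y_\infty$. Your approach shows that this compactness/blow-up step is dispensable for this lemma; the paper's route has the minor stylistic advantage of mirroring the proof of Lemma~\ref{lem:bound-1} and of recording along the way the additional information $\int_{\R^N}u_\infty K\le 1$ and $|y_\infty|\le R_0$, but it is not needed for the bound itself.

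Your ``alternative, perhaps cleaner'' Duhamel route, however, does not work as written. You apply the semigroup $e^{-(t_0-1)(-\Delta+\alpha g-1)}$ to the datum $u(1,\cdot)$ and estimate its kernel by $e^{(t_0-1)\|(\alpha g-1)^-\|_\infty}$ times the heat kernel. That factor is $e^{t_0-1}$, and even the sharp replacement would be $e^{(1-\lambda_0)(t_0-1)}$ since the bottom of the spectrum of $-\Delta+\alpha g-1$ is $\lambda_0-1<0$; either way the bound diverges as $t_0\to\infty$, so dropping the nonlinearity at time $1$ cannot yield a $t$-uniform estimate. What rescues the argument is the nonlinear feedback encoded in Lemma~\ref{lem:bound-1}; to make the Duhamel route work one must propagate only one unit of time, i.e.\ write
\[
u(t_0,\cdot)\ \le\ e^{-(-\Delta+\alpha g-1)}u(t_0-1,\cdot),
\]
and integrate the time-one kernel (which is pointwise dominated by a fixed multiple of a Gaussian) against the $L^1_y$-bound for $u(t_0-1,\cdot)$ furnished by Lemma~\ref{lem:bound-1}. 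With that one correction the Duhamel route is essentially equivalent to your local-maximum-principle route. Since you stated you would pick one of the two, I would present the local maximum principle version, which is correct as written.
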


\begin{proof}
Let $M_2$ be a sufficiently large constant to be fixed in the proof. 
Suppose there exists $t_0>0$ such that
\[
 \|u(t_0,\cdot)\|_{L^\infty(\R^{m+N})}=M_2\quad\mbox{ and } \|u(t,\cdot)\|_{L^\infty(\R^{m+N})}<M_2\quad\mbox{ for }t<t_0.
\]
By \eqref{eq:bounded by t} we can choose $M_2$ large enough so that $t_0\ge \sqrt{2R_0} $, where $R_0$ is the constant in \eqref{eq:R0}.
There exists a sequence $\{(x_n,y_n)\}$ for which $u(t_0,x_n,y_n)\to M_2$ as $n\to\infty$. We reason as in the proof of the preceding lemma. From \eqref{eq:bounded by t} we infer that $\{y_n\}$ is a bounded sequence.  As before, we define the translation (in $x$)
\[
 u_n(t,x,y)=u(t,x+x_n,y),
\]
which also satisfies \eqref{eq:non local interactions}. By \eqref{eq:bounded by t}, parabolic equation estimates and dominated convergence theorem, up to a subsequence, $y_n\to y_\infty$, and $\{u_n\}$ converges locally uniformly to $u_\infty$ which satisfies \eqref{eq:non local interactions}. Moreover,
\[
 0\le u_\infty(t,x,y)\le M_2\quad\mbox{ in }(0,t_0)\times\R^m\times\R^{N}, \quad u_\infty(t_0,0,y_\infty)=M_2,
\]
and thus
\[
 \partial_t u_\infty(t_0,0,y_\infty)\ge 0,\quad \Delta u_\infty(t_0,0,y_\infty)\le 0.
\]
This implies
\[
 \alpha g(y_\infty) M_2 \le (1-\int_{\R^N} u_\infty(t_0,0,z)K(z)\dd z)M_2.
\]
Thus,
\[
 |y_\infty|\le R_0\quad\mbox{and}\quad \int_{\R^N} u_\infty(t_0,0,z)K(z)\dd z\le 1.
\]
Let $\Omega=(t_0-\sqrt{2R_0}, t_0]\times \{x\in\R^m:|x|\le 2R_0\}\times \{y\in\R^N:|y|\le 2R_0\}$. The limit $u_\infty$ satisfies
\[
 \partial_t u_\infty-\Delta u_\infty+\alpha g(y) u_\infty\le u_\infty\quad\mbox{in }\Omega.
\]
By the local maximum principle in Lemma \ref{lem:local max principle}, we have
\[
 M_2=u_\infty(t_0,0,y_\infty)\le C\int_\Omega u_\infty(t,x,y)\dd t\dd x \dd y,
\]
where $C>0$ depends only on $\alpha$ and $g$. By Lemma \ref{lem:bound-1}, we have
\[
 \int_\Omega u_\infty(t,x,y)\dd t\dd x \dd y\le \tilde C M_1.
\]
where $\tilde C>0$ depends only on  $R_0$. Thus $M_2\le C\tilde C M_1$. This is a contradiction if we choose $M_2$ large enough. Hence, we proved that no such $t_0$ exists, from which the lemma follows.
\end{proof}

As a consequence we can show the uniformly exponential decay in $y$ of  $u$, independently of the time $t$.

\begin{lemma}\label{lem:uniform weighted bound}
For every $\gamma>0$ there exists a positive constant $M$ depending only on $C_0,K_1,\alpha, g$ and $\gamma$ such that
\[
 0\le u(t,x,y)\le M e^{-\gamma |y|}\quad\mbox{in }(0,+\infty)\times\R^m\times\R^N.
\]
\end{lemma}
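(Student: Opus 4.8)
The plan is to build a stationary supersolution of the form $w(x,y) = M\varphi_0(y)$, where $\varphi_0(y) = e^{-\gamma|y|} + \eps e^{\gamma|y|}$ is the same auxiliary weight used in the proof of Proposition \ref{prop:exp in y}, and compare $u$ to $w$ on the region where $\alpha g(y)$ is large. Recall from that earlier proof that for $|y| \ge r_0 = (N-1)\gamma^{-1}$ one has $\Delta_y \varphi_0(y) \le 2\gamma^2 \varphi_0(y)$. Choose $R_0 = R_0(\gamma)$ so large that $\alpha g(y) \ge 2\gamma^2 + 2$ for all $|y| \ge R_0$; then on $\Omega := (0,+\infty) \times \R^m \times \{|y| \ge R_0\}$ the function $w$ satisfies
\[
\partial_t w - \Delta w + \alpha g(y) w - w = M\big(\alpha g(y) - \varphi_0(y)^{-1}\Delta_y \varphi_0(y) - 1\big)\varphi_0(y) \ge M(2\gamma^2 + 2 - 2\gamma^2 - 1)\varphi_0(y) \ge 0,
\]
so $w$ is a supersolution of the linear equation $\partial_t \psi - \Delta \psi + \alpha g \psi \le \psi$ on $\Omega$. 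On the other hand, from \eqref{eq:non local interactions} and the nonnegativity of the nonlocal term, $u$ satisfies $\partial_t u - \Delta u + \alpha g u \le u$ everywhere, in particular on $\Omega$.

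Next I would arrange the boundary and initial comparisons. On the parabolic boundary of $\Omega$ there are two pieces. On $\{|y| = R_0\}$ we use Lemma \ref{lem:uniform bound}: since $\varphi_0(y) \ge e^{-\gamma R_0}$ there, choosing $M \ge M_2 e^{\gamma R_0}$ gives $w \ge M_2 \ge u$ on $\{|y| = R_0\}$ for all $t > 0$. At the initial time, \eqref{eq: bound u0} and Proposition \ref{prop:eigenfunction exp in y} give $u_0 \le C_0\psi_0 \le C_0 C_\gamma e^{-\gamma|y|} \le w$ once $M$ is also taken $\ge C_0 C_\gamma$; alternatively one just uses $u_0 \le M_2 \le w$ again. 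Finally, as $|y| \to \infty$ the term $\eps e^{\gamma|y|}$ in $\varphi_0$ forces $w \to +\infty$ while $u$ stays bounded by $M_2$, so $u - w \to -\infty$ uniformly, which lets the maximum principle on the unbounded domain $\Omega$ go through. Applying the comparison principle on $\Omega$ yields $u \le w = M(e^{-\gamma|y|} + \eps e^{\gamma|y|})$ on $\Omega$, and sending $\eps \to 0$ gives $u(t,x,y) \le M e^{-\gamma|y|}$ for $|y| \ge R_0$. For $|y| \le R_0$ we simply note $M_2 \le M_2 e^{\gamma R_0} e^{-\gamma|y|} \le M e^{-\gamma|y|}$, so the bound holds on all of $(0,+\infty)\times\R^m\times\R^N$ after possibly enlarging $M$; since $R_0$, $M_2$, and $C_\gamma$ depend only on $C_0, K_1, \alpha, g, \gamma$, so does $M$.

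The one delicate point is the justification of the maximum principle on the unbounded cylinder $\Omega$, whose spatial slice $\{|y|\ge R_0\}$ is itself unbounded in $y$ and on which we have not assumed any decay of $u$ a priori beyond boundedness. This is handled exactly as in Proposition \ref{prop:exp in y}: the strictly superlinear growth of $w$ in $|y|$ (coming from the $\eps e^{\gamma|y|}$ term), together with the uniform bound $u \le M_2$ from Lemma \ref{lem:uniform bound}, guarantees that $\sup_\Omega (u - w)$, if positive, is attained at an interior point or on the parabolic boundary, never "escaped to infinity"; the initial-time bound handles $t \to 0^+$ and the boundedness in $t$ of $w$ on bounded $y$-sets is not needed because we only compare where $|y| \ge R_0$. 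I expect no real obstacle beyond writing this barrier argument carefully; everything else is a direct consequence of Lemma \ref{lem:uniform bound}, Proposition \ref{prop:eigenfunction exp in y}, and the computation already performed in \eqref{eq:aux func exp}.
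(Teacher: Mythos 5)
Your overall strategy matches the paper's: compare $u$ against a barrier built from $\varphi_0(y) = e^{-\gamma|y|} + \eps e^{\gamma|y|}$ on the region $\{|y| \ge R_0\}$, using Lemma \ref{lem:uniform bound} on $\{|y| = R_0\}$, Proposition \ref{prop:eigenfunction exp in y} at $t=0$, and the subsolution property $\partial_t u - \Delta u + \alpha g\, u \le u$ in the interior, then send $\eps \to 0$. However, there is a genuine gap in your comparison step: your barrier $w = M\varphi_0(y)$ is independent of $x$, so while the $\eps e^{\gamma|y|}$ term forces $u - w \to -\infty$ as $|y| \to \infty$, nothing controls escape of the supremum as $|x| \to \infty$. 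Your sentence ``$\sup_\Omega(u-w)$, if positive, is attained at an interior point or on the parabolic boundary, never escaped to infinity'' is therefore false as stated: the maximizing sequence can wander off in $x$, where both $u$ and $w$ are merely bounded. The remark that ``boundedness in $t$ of $w$ on bounded $y$-sets is not needed because we only compare where $|y|\ge R_0$'' does not address this, since the slice $\{|y|\ge R_0\}$ is still a full $\R^m\times\{|y|\ge R_0\}$ and is unbounded in $x$.

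The paper's proof closes exactly this hole by taking $w(x,y) = M\bigl(1+\eps|x|^2\bigr)\bigl(e^{-\gamma|y|}+\eps e^{\gamma|y|}\bigr)$, i.e. multiplying by a factor that blows up as $|x|\to\infty$; the extra cost in the differential inequality is absorbed by strengthening the requirement on $R_0$ to $g(y)\ge\alpha^{-1}(\gamma^2+2m+2)$ for $|y|\ge R_0$ (the $2m$ is precisely $\Delta_x(|x|^2)$). This is also what the proof of Proposition \ref{prop:exp in y}, which you cite as your model, already does with the factor $(1+\eps x\arctan x)$; you quoted $\varphi_0$ from that proof but dropped its $x$-dependent prefactor. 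Alternatively one can keep your $x$-independent $w$ and run a Phragm\'en--Lindel\"of type argument (subtract $\eta(1+|x|^2+2mt)$, locate a genuine interior maximum for each fixed $T$, then let $\eta\to0$ and $T\to\infty$), but as written your proof neither includes the $x$-growing factor nor supplies a substitute, so the comparison is not justified. A secondary, minor issue: your ``alternatively one just uses $u_0\le M_2\le w$'' does not work uniformly in $\eps$, since $\inf_{|y|\ge R_0}\varphi_0$ scales like $\sqrt{\eps}$; the path through $u_0\le C_0\psi_0$ and Proposition \ref{prop:eigenfunction exp in y}, which you also mention, is the correct one and is the one the paper uses.
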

\begin{proof}
Let $w(x,y)=M(1+\eps |x|^2)(e^{-\gamma |y|}+\eps e^{\gamma |y|})$ for some $M>0$ and $\eps\in (0,1)$. 
A direct computation shows that 
\[
-\Delta w+\alpha g w-w\ge 0
\]
for $(x,y)\in \Omega:=\R^m\times \{y: |y|\ge R_0\}$, where $R_0\ge (N-1)\gamma$ 
is chosen so that $g(y)\ge\alpha^{-1}(\gamma^2+2m+2)$ for all $|y|\ge R_0$. By Proposition \ref{prop:eigenfunction exp in y}, 
we can choose $M$ large so that $Me^{-\gamma y}\ge C_0\psi_0(y)\ge u_0(x,y)$ for $(x,y)\in\Omega$, and $Me^{- \gamma R_0}\ge M_2$, where $M_2$ is the one in Lemma \ref{lem:uniform bound}. Since
 \begin{equation}\label{eq:u subsolution}
 u_t-\Delta u+\alpha g(y) u\le u,
\end{equation}
it follows from the comparison principle that
\[
u\le w\quad\mbox{in }\quad\mbox{in }(0,+\infty)\times\R^{m+N}.
\]
The conclusion follows by sending $\eps\to 0$.
\end{proof}

\subsection{Uniqueness of stationary solutions}
In this section, we consider nonnegative bounded stationary solutions $u=u(x,y)$ of \eqref{eq:non local interactions}, that is, solutions
 of the equation:
\begin{equation}\label{stat}
 - \Delta u + \alpha g(y) u = \left(1 - \int_{\R^N} K(z) u(x,z) \dd z \right) u,\quad (x,y)\in\R^m\times\R^N.
\end{equation}
By using the techniques in Section \ref{sec:tr}, we are able to show the uniqueness of the stationary solution. 

\begin{thm} \label{Liouville-stat} For $0<\alpha < \bar\alpha$, if $u\not\equiv 0$ is a non-negative bounded solution of \eqref{stat}, then $u(x,y)\equiv V(y)$, where $V$ is defined in \eqref{eq:steady solution}.
When $\alpha >�\bar\alpha$, the unique non-negative solution of \eqref{stat} is identically zero.
\end{thm}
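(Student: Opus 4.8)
The plan is to mimic the structure of the traveling wave analysis in Section \ref{sec:reduction}, treating a stationary solution of \eqref{stat} as a special case of a traveling wave with $c=0$, but where now the "one-dimensional" variable $x\in\R^m$ may be genuinely multidimensional. First I would observe that by the uniform decay estimate (the stationary analogue of Proposition \ref{prop:exp in y}, proved by the same barrier argument applied to a function $w(x,y) = C(e^{-\gamma|y|}+\eps e^{\gamma|y|})$ with no $x$-dependence needed), a nonnegative bounded solution $u\not\equiv 0$ satisfies $0 < u(x,y) \le C e^{-\gamma|y|}$ for every $\gamma$, hence $u(x,\cdot)\in\mathcal H(\R^N)$ for each $x$ and we may expand
\[
u(x,y) = \sum_{i=0}^\infty v_i(x)\psi_i(y), \qquad v_i(x) = \int_{\R^N} u(x,z)\psi_i(z)\dd z,
\]
where now $v_i : \R^m \to \R$ and each $v_i$ solves the linear equation
\[
-\Delta_x v_i + \lambda_i v_i = (1 - b(x)) v_i, \qquad b(x) = \int_{\R^N} u(x,z) K(z)\dd z > 0.
\]

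Next I would establish the finite-sum reduction: if $\lambda_i > 1$ then $v_i \equiv 0$. The one-dimensional barrier $w$ of Lemma \ref{l:finite summation} must be replaced by a multidimensional one; since $\lambda_i - 1 > 0$ and $b > 0$, the function $w(x) = \cosh(\sqrt{\lambda_i-1}\,x\cdot\omega)$ summed over coordinate directions, or simply $w(x) = \prod_{j=1}^m \cosh(\sqrt{(\lambda_i-1)/m}\,x_j)$, is a positive supersolution of $-\Delta_x w + (\lambda_i - 1) w = 0$ that blows up in all directions, so the maximum principle against $\eps w$ gives $v_i \equiv 0$. Thus $u(x,y) = \sum_{i=0}^J v_i(x)\psi_i(y)$ with $\lambda_i \le 1$ for $i\le J$, and $v_0 > 0$ everywhere by the strong maximum principle (after noting $v_0\ge 0$ from $u>0, \psi_0>0$, or rather $v_0 = \langle u, \psi_0\rangle > 0$). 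Then I would prove the boundedness of $w_i := v_i/v_0$ exactly as in Lemma \ref{l:quotient is bounded}: if $|w_k|$ were unbounded, pick points $x_j$ where $|v_\ell(x_j)| = \max_i |v_i(x_j)|$ with $|w_\ell(x_j)|\to\infty$, and $u(x_j,\cdot)/v_\ell(x_j)$ converges (along a subsequence) to a function orthogonal to $\psi_0$, hence sign-changing, contradicting $u>0$.

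Finally, for the Liouville conclusion I would run the argument of Theorem \ref{thm:liouville with c} with $c=0$. Computing, $w_i = v_i/v_0$ satisfies
\[
-\Delta_x w_i - \frac{2\nabla_x v_0}{v_0}\cdot\nabla_x w_i = (\lambda_i - \lambda_0) w_i,
\]
so since $\lambda_i \ge \lambda_0$ for $i\ge 1$ (with $\lambda_i > \lambda_0$ as $\lambda_0$ is simple, but $\lambda_1=\lambda_0$ is impossible by simplicity — in fact $\lambda_i\ge\lambda_1>\lambda_0$), the function $w_i$ cannot attain a positive interior maximum nor a negative interior minimum. A bounded solution of such an equation on all of $\R^m$ must be constant: indeed, if $\sup w_i = S > 0$ then $S$ is not attained, but a standard sliding/compactness argument (translate to points approaching the sup, extract a limit $\bar w$ with $\bar w \le S$, $\bar w(0) = S$, solving the limiting equation with $\lambda_i-\lambda_0>0$, forcing $\bar w \equiv S$ and then $S = 0$, contradiction) gives $w_i \le 0$; symmetrically $w_i\ge 0$, hence $w_i\equiv 0$. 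Therefore $u(x,y) = v_0(x)\psi_0(y)$, and plugging back, $b(x) = \mu^{-1}(1-\lambda_0) v_0(x)$ and $v_0$ solves the elliptic Fisher-KPP equation $-\Delta_x v_0 = \mu^{-1}(1-\lambda_0)(\mu - v_0)v_0$ on $\R^m$ with $v_0 > 0$ bounded; by the Liouville property for this equation (e.g. via the same maximum-principle sliding argument applied to $v_0$ and $\mu - v_0$, or citing the classical result) $v_0 \equiv \mu$, so $u \equiv \mu\psi_0 = V$. The case $\alpha > \bar\alpha$ is immediate from the discussion following \eqref{eq:steady solution}: any nonnegative bounded solution makes $1 - \int V K$ an eigenvalue of $\mathcal L$, forcing $1 - \int V K \ge \lambda_0(\alpha) > 1$, which is impossible unless $V\equiv 0$; I would include a one-line version of this.

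The main obstacle compared to the $m=1$ case is the passage from an ODE comparison argument to a genuinely multidimensional one: the explicit exponential barriers and the "monotone or single-extremum" structural dichotomy for $w_i$ used in Theorem \ref{thm:liouville with c} do not survive verbatim in $\R^m$. The fix is to replace them by (i) product-of-cosh supersolutions for the finite-sum reduction and (ii) a compactness/translation argument in place of the structural classification to conclude that a bounded entire solution of $-\Delta w - \frac{2\nabla v_0}{v_0}\cdot\nabla w = (\lambda_i-\lambda_0)w$ with $\lambda_i > \lambda_0$ is identically zero — this is where most of the care is needed, together with checking that $\nabla_x v_0 / v_0$ is bounded (which follows from Lemma \ref{lem:v0 lower bound}-type lower bounds for $v_0$ combined with gradient estimates, noting $c=0$ lands in the regime $c<c^*$ so that lemma applies).
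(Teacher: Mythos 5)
Your plan follows the paper's proof of Theorem \ref{Liouville-stat} closely: eigenfunction expansion $u=\sum_i v_i(x)\psi_i(y)$ (justified by the uniform exponential decay in $y$), the lower bound $\inf_{\R^m} v_0>0$ obtained by a barrier-plus-compactness argument, and then a translation/compactness argument showing that each bounded $w_i=v_i/v_0$, solving $-\Delta w_i - 2(\nabla v_0/v_0)\cdot\nabla w_i = (\lambda_0-\lambda_i)w_i$ with $\lambda_i>\lambda_0$, must vanish. These are exactly the paper's steps, including the final reduction to a scalar Fisher-KPP equation for $v_0$ and the sup/inf sliding argument giving $v_0\equiv\mu$.

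Two remarks. First, the finite-sum reduction via the product-of-cosh barrier, and the subsequent Lemma \ref{l:quotient is bounded}-style boundedness argument for $w_i$, are correct but superfluous: once $\inf_{\R^m} v_0>0$ is established (which you need anyway to bound the drift $\nabla v_0/v_0$), each $w_i=v_i/v_0$ is automatically bounded since $|v_i(x)|\le\|u(x,\cdot)\|_{L^2(\R^N)}$ is uniformly bounded by the decay estimate, and the translation/compactness argument then applies to every $i\ge 1$ directly, with no need to truncate the series or re-prove boundedness of the quotients. The paper proceeds that way; your route is a correct detour.

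Second, your treatment of $\alpha>\bar\alpha$ has a genuine gap as stated. The "discussion following \eqref{eq:steady solution}" concerns the $x$-independent equation \eqref{eq:steady}, whereas \eqref{stat} allows arbitrary $x$-dependence, and for $\alpha>\bar\alpha$ you have not shown the solution is $x$-independent. Without that, you cannot identify the integral term as a single scalar eigenvalue of $\mathcal L$. The fix is already in your toolkit: for $\alpha>\bar\alpha$ every $\lambda_i\ge\lambda_0>1$, so your own cosh barrier argument for $v_i$ (or, as the paper does, a translation/compactness argument applied directly to $v_i$, using that $1-\lambda_i-b(x)<0$ everywhere) yields $v_i\equiv 0$ for all $i\ge 0$ and hence $u\equiv 0$. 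You should invoke that multidimensional argument here rather than the $x$-independent discussion.
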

\begin{proof}
First of all, it follows from the same proof as for Lemma \ref{lem:uniform weighted bound} that for every $\gamma>0$ there exists a positive constant $M$ such that
\begin{equation}\label{eq:stationary decay}
 0\le u(x,y)\le M e^{-\gamma |y|}\quad\mbox{in }\R^m\times\R^N.
\end{equation}
Therefore, as in Section \ref{sec:tr},  we can write $u$ as
\begin{equation}\label{e:decomposition-m}
u(x,y) = \sum_{i=0}^\infty v_i(x) \psi_i(y),
\end{equation}
where the $\psi_i$ are from the orthonormal basis in Lemma \ref{l:spectral} and $v_i(x)=\int_\R u(x,z)\psi_i(z)\! \dd z\in L^\infty(\R^m)$.  
Moreover, the equation \eqref{stat} splits into a sequence of equations for each $v_i$:
\begin{equation} \label{e:equations-for-vi-m}  
- \Delta v_i  = (1-\lambda_i-b(x)) v_i,
\end{equation}
where $ b(x) = \int_{\R^N} u(x,z) K(z)\dd z$. 

Suppose first that $\alpha<\bar\alpha$. By arguments similar to the proof of Lemma \ref{lem:v0 lower bound} we get 
\begin{equation}\label{eq:lower bound v0}
\inf_{x\in\R^m} v_0(x)>0.
\end{equation}
Indeed, suppose there exists a sequence $\{x_k\}$, $|x_k|\to\infty$, for which
$
v_0(x_k)\to 0.
$
Then, by exactly the same proof as for \eqref{eq:aux for v0}, for every $L>0$, we have
\[
\lim_{k\to\infty}\sup_{|x-x_k|\le L} v_0(x)=0.
\]
Since $\alpha<\bar\alpha$, we have $\lambda_0<1$. Choose $\delta>0$ such that $\lambda_0<1-\delta$. 
Let $\beta>0$ and $\varphi_0(x)$ be the first eigenvalue and first eigenfunction of the Dirichlet problem in the unit ball of $\R^m$ with the normalization of unit $L^\infty$ norm. That is,
\begin{equation}\label{eq:dirichilet unit}
\begin{cases}
-\Delta_{x}\varphi_0=\beta\varphi_0\quad\mbox{on }B_1:=\{x\in\R^m: |x|<1\},\\
\varphi_0>0\quad\mbox{in }B_1,\quad \varphi_0=0\quad\mbox{on }\partial B_1, \quad \|\varphi_0\|_{L^\infty(B_1)}=1.
\end{cases}
\end{equation}
Let $L=\sqrt{\frac{\beta}{1-\lambda_0-\delta}}$ and $\phi_k(x)=\varphi_0((x-x_k)/L)$, then it satisfies
\[
- \Delta\phi_k+ (\lambda_0+\delta-1) \phi_k=0.
\]
There exists $\eps>0$ such that $\eps\phi_k$ touches $v_0$ from below at some point $\bar x_k\in \{x:|x-x_k|< L\}$. Then by evaluating the equation
\[
- \Delta (v_0-\eps\phi_k)+ (\lambda_0-1) (v_0-\eps\phi_k)=-b v_0+\delta\eps\phi_k\quad\mbox{at }\bar x_k,
\]
we have $b(\bar x_k)\ge\delta$. The rest is identical to the proof of Lemma \ref{lem:v0 lower bound}. This proves \eqref{eq:lower bound v0}.

Let  $i\geq 1$ be fixed. We set
\[
w_i:= \frac{v_i}{v_0}.
\]
From \eqref{e:equations-for-vi-m} we see that $w_i$ satisfies
\begin{equation}\label{eq:w-i-m}
-\Delta w_i - 2 \frac{\nabla v_0}{v_0} \cdot \nabla w_i\, = \, (\lambda_0 - \lambda_i) w_i.
\end{equation}
We know from \eqref{eq:lower bound v0} that $w_i$ is bounded in $\R^m$ and the above equation has bounded coefficients. Suppose $w_i$ is not identically zero. We can assume that it is positive somewhere. Then
\[
0 < \sup_{\R^m} w_i < \infty.
\]
If this supremum is reached at a point $\bar x\in\R^m$, then, since $\lambda_0 < \lambda_i$, we get $\Delta w_i (\bar x) >0$ from \eqref{eq:w-i-m}, which is absurd. So let us assume that for some sequence 
$x_j\in\R^m$, with $|x_j|\to\infty$, we have
\[
\lim_{j\to\infty} w_i(x_j)=  \sup_{\R^m} w >0.
\]
Let us now set $w_{i,j}(x):= w_i(x_j+x)$ and $v_{0,j}(x) : = v_0(x_j+x)$. From \eqref{eq:w-i-m} we get
\[
-\Delta w_{i,j} - 2 \frac{\nabla v_{0,j}}{v_{0,j}} \cdot \nabla w_{i,j}\, = \, (\lambda_0 - \lambda_i) w_{i,j}.
\]
Since $v_0$ is bounded from below and satisfies \eqref{e:equations-for-vi-m}, by elliptic regularity estimates, we can strike out a subsequence, which is still denoted by $j$, such that
\[
v_{0,j} \To v_{0,\infty}, \quad w_{i,j}\To w_{i,\infty}, \quad \inf_{\R^m} v_{0,\infty} >0.
\]
Moreover, we have $w_{i,\infty} \leq \sup_{\R^m} w_i$, $w_{i,\infty} (0) = \sup_{\R^m} w_i$ 
whence $w_{i,\infty} (0) = \sup_{\R^m} w_{i,\infty}$ and $w_{i,\infty}$ satisfies the equation 
\[
-\Delta w_{i,\infty} - 2 \frac{\nabla v_{0,\infty}}{v_{0,\infty}} \cdot \nabla w_{i,\infty}\, = \, (\lambda_0 - \lambda_i) w_{i,\infty}.
\]
We reach a contradiction by analyzing this equation at $0$.

This proves $v_i\equiv 0$ for all $i\ge 1$. Therefore, every nonnegative bounded solution of \eqref{stat} satisfies
\[
u(x,y)=v_0(x)\psi_0(y).
\]
Hence, $b(x)=v_0(x)\int_{\R^N}\psi_0(y)K(y)\dd y=\mu^{-1}(1-\lambda_0)v_0(x)$ where $\mu$ is the one in \eqref{eq:-infty}, and $v_0$ satisfies a classical Fisher-KPP equation
\[
- \Delta v_0 =\mu^{-1}(1-\lambda_0)(\mu-v_0) v_0.
\]
We have $v_0\equiv \mu$, and thus, $u\equiv V.$ We remark that this translation and compactness proof can also be used to prove Theorem \ref{thm:liouville with c}.

Suppose now that $\alpha>\bar\alpha$. We want to show that $v_i\equiv 0$ for all $i\ge 0$. We can do the above translation and compactness arguments for \eqref{e:equations-for-vi-m} directly, since $1-\lambda_i-b(x)\le 1-\lambda_0<0$. Suppose $v_i\not\equiv 0$ for some $i$. We can assume that it is positive somewhere, and then 
$
0 < \sup_{\R^m} v_i < \infty.
$
By the equation \eqref{e:equations-for-vi-m}, this positive supremum cannot be achieved at any point.
So there exists some sequence $x_j\in\R^m$, with $|x_j|\to\infty$, for which
\[
\lim_{j\to\infty} v_i(x_j)=  \sup_{\R^m} v_i >0.
\]
Then we do a translation $v_{i,j}(x)=v_i(x_j+x)$, $b_j(x)=b(x_j+x)$ and $u_j(x,y)=u(x_j+x, y)$. By elliptic regularity estimates, after extraction of a subsequence, we can assume that $v_{i,j}$ and $b_j$ are locally uniformly convergent to $v_{i,\infty}$ and $b_\infty$, respectively. These limits satisfy
\[
- \Delta v_{i,\infty}  = (1-\lambda_i-b_\infty(x)) v_{i,\infty}.
\]
Moreover, $v_{i,\infty}\le \sup_{\R^m} v_i$ in $\R^m$, $v_{i,\infty}(0)=\sup_{\R^m} v_i$. We reach a contradiction by evaluating the above equation at $0$. Therefore, in the case of $\alpha>\bar\alpha$, every bounded nonnegative solution of \eqref{stat} has to be identically zero.
\end{proof}

\subsection{Asymptotic speed of propagation}

In this section we prove the following long time behavior properties for solutions of the Cauchy problem \eqref{eq:non local interactions} with compactly supported nonnegative initial data. This is a weaker version of the results stated in Theorem~\ref{t:into-asp} that we require as a first step in proving the stronger version.

\begin{thm}\label{thm:asp}
Assume conditions \eqref{eq:condition-1}, \eqref{eq:assumption-2} and \eqref{eq:condition g}. Consider the 
solution $u$ of \eqref{eq:non local interactions} 
with $u(\cdot, 0)=u_0\,$ smooth, having compact support in $\R^{m+N}$, $u_0\ge 0$, and $u_0 \not\equiv 0$. 
\begin{enumerate}
\item[(i): ] if $\alpha>\bar\alpha$, then $u(t,x,y)\to 0$ exponentially in $t$, uniformly in $(x,y)$.

\item[(ii):] if $0<\alpha<\bar\alpha$, then, for every $y\in\R^N$, 
\begin{equation}\label{eq:lower bound speed}
  \liminf_{t\to+\infty}\Big(\min_{|x|\le ct}u(t,x,y)\Big)>0\quad\mbox{ for all }0\le c<c^*, 
\end{equation}
and
\begin{equation}\label{eq:up bound speed}
\lim_{t\to+\infty}\Big(\sup_{|x|\ge ct, y\in\R^N}u(t,x,y)\Big)=0\quad\mbox{ for all }c>c^*.
\end{equation}

\end{enumerate}
\end{thm}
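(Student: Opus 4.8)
The upper bound \eqref{eq:up bound speed} and the extinction result (i) are the easy half: the solution $u$ is a subsolution of the linear equation \eqref{eq:linear}, so by \eqref{eq:bounded by t} we have $0\le u(t,x,y)\le C_0 e^{(1-\lambda_0)t}\psi_0(y)$, but this is too crude for the spreading statement. To get \eqref{eq:up bound speed}, I would instead compare $u$ with the tensorized supersolution $w(t,x,y)=A e^{(1-\lambda_0)t}\psi_0(y)\phi(t,x)$, where $\phi$ is built from the heat kernel in $x$ only; concretely, for a compactly supported datum one has $u(t,x,y)\le A\psi_0(y)(4\pi t)^{-m/2}\int e^{-|x-z|^2/(4t)}u_0(z,\cdot)\,dz\cdot e^{(1-\lambda_0)t}$ up to constants, and the Gaussian factor forces $u\to 0$ on $\{|x|\ge ct\}$ whenever $(1-\lambda_0)-c^2/4<0$, i.e. $c>c^*=2\sqrt{1-\lambda_0}$. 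When $\alpha>\bar\alpha$ we have $1-\lambda_0<0$, so the linear bound \eqref{eq:bounded by t} already gives exponential decay in $t$ uniformly in $(x,y)$, which is (i).

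The genuine difficulty is the lower bound \eqref{eq:lower bound speed} for $c<c^*$, where no comparison principle is available for the full nonlocal equation. The plan, following the Hamel–Ryzhik compactness strategy, is to argue by contradiction: suppose that along a sequence $t_n\to+\infty$ and points $x_n$ with $|x_n|\le c t_n$ (for some fixed $c<c^*$) and a fixed $y_0$ one has $u(t_n,x_n,y_0)\to 0$. First I would fix $\delta>0$ small enough that $c<2\sqrt{1-\lambda_0-\delta}$. Using the uniform-in-time bounds from Lemmas \ref{lem:uniform bound} and \ref{lem:uniform weighted bound} together with parabolic estimates, I would translate $u_n(t,x,y):=u(t+t_n,x+x_n,y)$ and pass to a locally uniform limit $u_\infty$ solving a limiting equation $\partial_t u_\infty-\Delta u_\infty+\alpha g u_\infty=(1-b_\infty(t,x))u_\infty$, with $b_\infty\ge 0$ bounded, and $u_\infty\ge 0$, $u_\infty(0,0,y_0)=0$; by the strong maximum principle (applied backward in $t$) this forces $u_\infty\equiv 0$ on $(-\infty,0]\times\R^{m+N}$, hence in particular $b_\infty\equiv 0$ there and, projecting onto $\psi_0$, the first mode $v_0^\infty\equiv 0$ on a whole space-time neighborhood. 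The key quantitative step is to show that $v_0(t,x):=\int u(t,x,z)\psi_0(z)\,dz$ is, on $\{|x|\le ct\}$, bounded below by a positive constant for large $t$ — this is where the gap $c<c^*$ is used. For that I would exhibit a compactly supported (in $x$) subsolution of KPP type: let $\varphi_0$ be the first Dirichlet eigenfunction of $-\Delta_x$ on $B_L$ with eigenvalue $\beta=L^{-2}\beta_1$, chosen so that $\beta<1-\lambda_0-\delta$, and propagate a small bump $\eps e^{-\beta t}\psi_0(y)\varphi_0(x-x_0)$ — wait, more precisely one propagates along moving balls $B_L(\xi(t))$ with $|\dot\xi|\le c$; as long as $b$ stays small on the support, $v_0$ is a supersolution of this linear flow and stays bounded below, and if $b$ is *not* small somewhere, Lemma \ref{lem:bound-1} and \eqref{eq:assumption-2} (boundedness of $K$ below near $0$) turn a lower bound on $b$ into a lower bound on the mass of $u$, hence on $v_0$, near that point. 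Iterating this along a chain of overlapping moving balls from the support of $u_0$ out to distance $ct$ yields $\inf_{|x|\le ct} v_0(t,x,y)\ge$ const, contradicting $v_0^\infty\equiv 0$ above.

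The main obstacle I anticipate is making the dichotomy "$b$ small $\Rightarrow$ linear lower bound propagates / $b$ large $\Rightarrow$ mass is large" fully rigorous and uniform: one must choose the threshold $\delta$, the ball radius $L$, and the amplitude $\eps$ in the right order so that the moving-ball argument closes, and one must control the dependence on $y$ (here the exponential-in-$y$ decay from Lemma \ref{lem:uniform weighted bound} and positivity of $\psi_0$ on compacts do the job, reducing everything to the $y$-projection $v_0$). The compactness/translation limit must be taken with some care since the nonlocal term $b$ does not obviously pass to the limit pointwise, but dominated convergence together with the uniform weighted bound handles it, exactly as in the proofs of Lemmas \ref{lem:bound-1} and \ref{lem:uniform bound}. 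Once \eqref{eq:lower bound speed} is established, the sharper convergence to $V(y)$ on $\{|x|\le ct\}$ (needed for Theorem \ref{t:into-asp}) will be obtained separately in Theorem \ref{convergence} by feeding this lower bound into the mode decomposition of Section \ref{sec:reduction}.
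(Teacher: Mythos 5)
Your plan correctly identifies the overall architecture: part~(i) and the upper bound \eqref{eq:up bound speed} follow from comparison with explicit supersolutions of the linear equation, while the lower bound \eqref{eq:lower bound speed} is the hard part and requires a Hamel--Ryzhik type compactness/translation argument, made possible by the uniform-in-time bounds of Lemmas~\ref{lem:uniform bound} and~\ref{lem:uniform weighted bound}. For the upper bound you tensorize a Gaussian in $x$ with $\psi_0(y)$; the paper instead uses the plane-wave supersolutions $\psi_e(t,x,y)=M_3e^{-\frac{c^*}{2}(x\cdot e-c^*t)}V(y)$ for each direction $e\in\mathbb S^{m-1}$ and minimizes over $e$. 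Both routes are elementary and equivalent in content.

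For the lower bound, however, your proposed resolution of the dichotomy on $b(t,x)=\int K u$ diverges from the paper's and is, as you yourself flag, not closed. You suggest propagating a small subsolution forward along a chain of overlapping moving balls from the support of $u_0$, restarting whenever $b$ becomes large. The difficulty is exactly at the restart: ``$b$ large at some time $\tau$'' gives a lower bound on the mass of $u(\tau,\cdot)$ near a point, but you then have to continue the chain past $\tau$, and $b$ may become large again arbitrarily often; making this iteration uniform in the number of restarts is not obvious. The paper sidesteps the iteration entirely. It first passes to the moving frame $v_n(t,x,y)=u_n(t,x+c_nte_n,y)$ (a step absent from your plan, and essential so that the contradiction point becomes $x=0$ and the comparison domain is a fixed cylinder $S_{L,R}$). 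It then introduces a stopping time
\[
t^*_n=\inf\Bigl\{t\in[-t_n+1,0]:\ \int_{\R^N}K(y)v_n(s,x,y)\,\dd y\le\delta\ \text{on}\ [t,0]\times\{|x|\le L+1\}\Bigr\},
\]
i.e.\ the onset (going backward from $0$) of smallness of the nonlocal term. A \emph{single} compactness/translation argument, using \eqref{eq: v lower} and \eqref{eq:max achieved}, shows that $v_n(t^*_n,\cdot,\cdot)$ is bounded below by some $\rho>0$ on $\overline S_{L,R}$. One then performs a \emph{single} comparison on $[t^*_n,0]\times S_{L,R}$ with the explicit time-independent subsolution $w_n=e^{-c_n(x\cdot e_n+L)/2}\varphi_L(x)\psi^R(y)$, built from the Dirichlet eigenfunctions in $B_L\subset\R^m$ and $B_R\subset\R^N$ (not from the global eigenfunction $\psi_0$: this is why the approximation $\lambda^R\downarrow\lambda_0$ enters, and why $\delta$ must be chosen so that $c<2\sqrt{1-\lambda_0-2\delta}$). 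This yields $v_n(0,0,y_0)\ge\rho e^{-cL/2}\varphi_L(0)\psi^R(y_0)$, contradicting $u(t_n,x_n,y_0)\to0$ without any chaining. So: your diagnosis of what is hard is correct, but the mechanism you propose to handle it is genuinely different from the paper's, and the ``$b$-large restart'' step in your version is the gap that would need to be filled.
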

\begin{proof}
The conclusion in $(i)$ immediately follows from \eqref{eq:bounded by t} since   $\lambda_0>1$ when $\alpha>\bar\alpha$.

To prove \eqref{eq:up bound speed} we shall use the exponential solutions $\psi_e(t, x,y)=M_3e^{-\frac{c^*}{2}(x\cdot e-c^* t)}V(y)$, $e\in\mathbb S^{m-1}$, which are solutions of \eqref{eq:linear}.  
From \eqref{eq:u subsolution} and by the comparison principle, if we choose $M_3>0$ large, we have
\[
 u\le\psi_e, \quad \mbox{for all}\quad e\in\mathbb S^{m-1}.
\]
By minimizing over $e$ (for each $x$) we derive:
\[
u(t,x,y) \leq M_3 e^{-\frac{c^*}{2}(c-c^* ) t)}V(y), \quad \mbox{for all}\quad |x|\geq ct, y\in \R^N.
\]
Therefore, for $c>c^*$,
\[
\lim_{t\to\infty} \left( \sup_{ |x| \geq ct, y\in\R^N}  u(t, x,y)   \right) = 0.
\] 

To prove \eqref{eq:lower bound speed}, we use some compactness arguments as in \cite{HR}. We argue by contradiction. Suppose that there are $c<c^*$, $y_0\in\R^N$ and a sequence $\{(t_n,x_n)\}$ such that
\[
 \begin{cases}
  |x_n|\le c t_n\quad\mbox{for all }n\in\mathbb N,\\
   t_n\to\infty\quad\mbox{and } u(t_n,x_n,y_0)\to 0\quad\mbox{as }n\to\infty.
 \end{cases}
\]
We may assume that $c_n:=|x_n|/t_n\to c_\infty\in [0,c]$ as $n\to\infty$. We let $e_n=x_n/|x_n|\in \mathbb S^{m-1}$ 
 (if $x_n=0$, we let $e_n$ be the north pole of $\mathbb S^{m-1}$) and assume $e_n\to e_\infty$ as $n\to\infty$.
 
For each $n$ and $(t,x)\in (-t_n,+\infty)\times\R^m$, we define the translation of $u$ in $(t,x)$
\[
 u_n(t,x,y)=u(t+t_n,x+x_n,y).
\]
By Lemma \ref{lem:uniform weighted bound}, standard parabolic equation estimates, and dominated convergence theorem, there exists a subsequence of $\{u_n\}$, which we still denote by $\{u_n\}$, such that $u_n$ is locally uniformly convergent to $U$ satisfying
\begin{equation}
 \partial_t U - \Delta U + \alpha g U=\left(1 - \int_{\R^N} U(t, x,z)K(z) \dd z \right) U\quad\mbox{in }\R\times \R^m\times\R^{N}.
\end{equation}
Moreover,
\[
U(0,0,y_0)=0,\quad U\ge 0\quad\mbox{in }\R\times \R^m\times \R^{N}.
\]
By the strong maximum principle, $U\equiv 0$ in $(-\infty,0]\times \R^{m+N}$. 
Consequently, by the comparison principle, we also have $ U\equiv 0$ in $[0,+\infty)\times \R^{m+N}$, and thus, 
\[
 U\equiv 0\quad\mbox{in }\R\times \R^{m+N}.
\]
Let
\[
v_n(t,x,y)=u_n(t,x+c_n t e_n, y)=u(t+t_n, x+c_n(t+t_n)e_n, y).
\]
Then
\begin{equation}\label{eq:for vn}
\partial_t v_n - \Delta v_n-c_n e_n\cdot \nabla_x v_n + \alpha g v_n=\left(1 - \int_{\R^N} v_n(t, x,z)K(z) \dd z \right) v_n,
\end{equation}
Since $c_n$ is bounded, $\{v_n\}$ also converges  locally uniformly  to $0$ in $\R\times\R^{m+N}$. By Lemma \ref{lem:uniform weighted bound}, we see that $\int_{\R^N} v_n(t,x,y)K(y)\dd y$ converges to $0$ locally uniformly  as well.

Since $c<c^*=2\sqrt{1-\lambda_0}$, we can choose $\delta>0$ so as to have
\[
|c_n|\le c<2\sqrt{1-\lambda_0-2\delta}.
\]
Now let us use the property that $\lambda_0$ is the limit of the principal eigenvalue $\lambda^R$ of the Dirichlet problem in $B_R\subset\R^N$ as $R\to\infty$ (see \cite{BR06} for more details). That is:
\[
\begin{cases}
-\Delta_{y}\psi^R+\alpha g \psi^R=\lambda^R \psi^R\quad\mbox{on }B_R:=\{y\in\R^N: |y|<R\},\\
\psi^R>0\quad\mbox{in }B_R,\quad \psi^R=0\quad\mbox{on }\partial B_R, \quad \|\psi^R\|_{L^\infty}=1.
\end{cases}
\]
More precisely, $\lambda^R>\lambda_0$ and $\lambda^R\to\lambda_0$ as $R\to\infty$. We can choose $R$ large enough to have $\lambda_0<\lambda^R<\lambda_0+\delta$ and $|y_0|\le R/2$. Then $c<2\sqrt{1-\lambda^R-\delta}$. 

Let $\beta>0$ and $\varphi_0(x)$ be the elements in \eqref{eq:dirichilet unit}. 
Let $L=\sqrt{\frac{4\beta}{4(1-\delta-\lambda^R)-c^2}}$ and $\varphi_L(x)=\varphi_0(x/L)$. Then
\[
\begin{cases}
-\Delta_{x}\varphi_L=(1-\delta-\lambda^R-\frac{c^2}{4})\varphi_L\quad\mbox{on }B_L:=\{x\in\R^m: |x|<L\},\\
\varphi_L>0\quad\mbox{in }B_L,\quad \varphi_L=0\quad\mbox{on }\partial B_L, \quad \|\varphi_L\|_{L^\infty(B_L)}=1.
\end{cases}
\]
Define
\[
w_n=
\begin{cases}
e^{-c_n (x\cdot e_n+L)/2}\varphi_L(x)\psi^R(y)\quad\mbox{when }(x,y)\in S_{L,R}=\{(x,y): |x|<L, |y|<R\},\\
0\quad\mbox{elsewhere}.
\end{cases}
\]
It is easy to check that for $(x,y)\in S_{L,R}$, we have
\[
-c_ne_n\cdot \nabla_x w_n-\Delta w_n +\alpha g w_n=\left(\frac{c_n^2-c^2}{4}+1-\delta\right)w_n\le (1-\delta)w_n.
\]

Since $u(1,\cdot,\cdot)$ is continuous and positive in $\R^{m+N}$, there exists $\eta>0$ such that
\[
u(1,x,y)\ge\eta>0\quad\mbox{for all }|x|\le L+c+1, |y|\le R+1.
\]
Then
\begin{equation}\label{eq: v lower}
v_n(-t_n+1,x,y)=u(1,x+c_ne_n,y)\ge\eta \quad\mbox{for all }|x|\le L+1, |y|\le R+1.
\end{equation}
Since $\int_{\R^N} v_n(t,x,y)K(y)\dd y\to 0$ locally uniformly as $n\to\infty$, we define, for $n>J$ (large),
\[
t^*_n=\inf\{t\in [-t_n+1,0]: 0\le\int_{\R^N} v_n(s,x,y)K(y)\dd y\le\delta\mbox{ in }[t,0]\times \{x:|x|\le L+1\}\}.
\]
We may assume $t^*_n<0$. By continuity, we have
\begin{equation}\label{eq:v integral bound}
0\le\int_{\R^N} v_n(t,x,y)K(y)\dd y\le\delta\mbox{ in }[t^*_n,0]\times  \{x:|x|\le L+1\},
\end{equation}
and
\begin{equation}\label{eq:max achieved}
\mbox{if}\quad t^*_n>-t_n+1\quad\mbox{ then }\quad\max_{|x|\le L+1}\int_{\R^N} v_n(t^*_n, x,y)K(y) \dd y=\delta.
\end{equation}

We claim that 
there exists some $\rho>0$ such that
\begin{equation}\label{eq:vn uniform bound}
\min_{|x|\le L, |y|\le R} v_n(t_n^*,\cdot,\cdot)\ge \rho\quad \mbox{ for all } \quad n>J.
\end{equation}
Let us postpone the proof of this claim, and use it to prove \eqref{eq:lower bound speed}. By \eqref{eq:for vn} and \eqref{eq:v integral bound} we have,
\[
\partial_t v_n - \Delta v_n -c_n e_n\cdot \nabla_x v_n + \alpha g v_n\ge\left(1 - \delta\right) v_n\quad\mbox{in }[t^*_n,0]\times S_{L,R}.
\]
By the comparison principle, we have
\[
v_n(t,x,y)\ge \rho w_n(x,y)\quad\mbox{in } [t^*_n,0]\times S_{L,R}.
\]
Since $|y_0|\le R/2$, we have
\[
u(t_n,x_n,y_0)=v_n(0,0,y_0)\ge \rho w_n(0,y_0)=\rho e^{-c_n L/2}\varphi_L(0)\psi^R(y_0)\ge \rho e^{-c L/2}\varphi_L(0)\psi^R(y_0),
\]
which is in contradiction with  $u(t_n,x_n,y_0)\to 0$ as $n\to\infty$.

So it only remains to show \eqref{eq:vn uniform bound}. 

If \eqref{eq:vn uniform bound} fails, after extraction of a subsequence, there exists a sequence $\{(x_n,y_n)\}$ in $\overline S_{L,R}$ such that
\[
v_n(t^*_n,x_n,y_n)\to 0\quad\mbox{and}\quad (x_n,y_n)\to (\bar x,\bar y)\in \overline S_{L,R}.
\]
Define
\[
V_n(t,x,y)=v_n(t+t^*_n,x,y)\quad\mbox{for all }(t,x,y)\in (-t_n-t^*_n,+\infty)\times\R^{m+N},
\]
which satisfies \eqref{eq:for vn} as well. Notice that $-t_n-t^*_n\le -1.$ As before, up to extracting a  subsequence, $V_n$ converges locally uniformly to $V_\infty$ which is a bounded solution of
\[
\partial_t V_\infty - \Delta V_\infty-c_\infty e_\infty\cdot \nabla _x V_\infty + \alpha g V_\infty=\left(1 - \int_{\R^N} V_\infty(t, x,z)K(z) \dd z \right) V_\infty
\]
in $(-1,+\infty)\times\R^{m+N}$. Moreover,
\[
V_\infty(t,x,y)\ge 0\quad\mbox{for all }(t,x,y)\in  (-1,+\infty)\times\R^{m+N}\quad\mbox{and }V_\infty(0,\bar x,\bar y)=0.
\]
By the strong maximum principle, we have $V_\infty\equiv 0$ in $(-1,0]\times\R^{m+N}$, and consequently, by the comparison principle, $V_\infty\equiv 0$ in $[0, +\infty)\times\R^{m+N}$. Hence $V_n$ converges locally uniformly to $0$ in $(-1,+\infty)\times\R^{m+N}$. By Lemma \ref{lem:uniform weighted bound}, $\int_{\R^N} V_n(t, x,y)K(y) \dd y$ also converges to $0$ locally uniformly.  Hence 
$v_n(t^*_n,\cdot,\cdot)\to 0$ and $\int_{\R^N} v_n(t^*_n, \cdot,y)K(y) \dd y \to 0$ locally uniformly. This contradicts \eqref{eq: v lower} and \eqref{eq:max achieved}. The proof is thereby complete.
\end{proof}

\subsection{Asymptotic speed of propagation to $V(y)$}
To complete the proof of Theorem~\ref{t:into-asp}, it remains to prove the following  sharper statement.
\begin{thm}\label{convergence}
 Let $u(t,x,y)$ be as in Theorem \ref{thm:asp}. Assume $0< \alpha < \bar\alpha$. Then, for every $0\le c<c^*$, we have
\[
\lim_{t\to\infty}\sup_{|x|\le ct}|u(t,x,y)-V(y)|=0\quad\mbox{uniformly in }y,
\] 
where $V$ is the unique solution of \eqref{eq:steady} given by \eqref{eq:steady solution}.
\end{thm}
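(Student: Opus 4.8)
The plan is to argue by contradiction: extract an entire (in time) limiting solution by a parabolic translation--compactness argument, reduce it to a scalar Fisher--KPP equation using the spectral decomposition of Section~\ref{sec:reduction}, and conclude by a Liouville property for entire KPP solutions that are bounded away from zero.

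Suppose the statement fails. Then there exist $c\in[0,c^*)$, $\eps_0>0$ and a sequence $(t_n,x_n,y_n)$ with $t_n\to+\infty$, $|x_n|\le ct_n$, and $|u(t_n,x_n,y_n)-V(y_n)|\ge\eps_0$. Fix $\gamma>\kappa$. Since $0\le u(t,x,y)\le Me^{-\gamma|y|}$ by Lemma~\ref{lem:uniform weighted bound} and $V(y)=\mu\psi_0(y)\le Ce^{-\gamma|y|}$ by \eqref{eq:steady solution} and Proposition~\ref{prop:eigenfunction exp in y}, the sequence $\{y_n\}$ is bounded, so up to a subsequence $y_n\to y_\infty$. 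Set $u_n(t,x,y)=u(t+t_n,x+x_n,y)$, which solves \eqref{eq:non local interactions} on $(-t_n,+\infty)\times\R^m\times\R^N$. By the uniform bounds of Lemmas~\ref{lem:uniform bound} and~\ref{lem:uniform weighted bound}, interior parabolic estimates, and dominated convergence for the nonlocal term (using $K(z)u_n(t,x,z)\le\kappa Me^{(\kappa-\gamma)|z|}\in L^1(\R^N)$), a subsequence of $\{u_n\}$ converges locally uniformly, together with its derivatives, to a bounded nonnegative entire solution $U$ of \eqref{eq:non local interactions} on $\R\times\R^m\times\R^N$, with $0\le U\le Me^{-\gamma|y|}$ and $|U(0,0,y_\infty)-V(y_\infty)|\ge\eps_0$.

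The heart of the matter is to prove $U\equiv V$. First, $U$ is bounded below: fix $\tilde c\in(c,c^*)$; for each $(t,x,y)$ and all large $n$ one has $|x+x_n|\le|x|+ct_n\le\tilde c(t+t_n)$, hence $u_n(t,x,y)\ge\min_{|x'|\le\tilde c(t+t_n)}u(t+t_n,x',y)$, and passing to the limit and using \eqref{eq:lower bound speed} gives $U(t,x,y)\ge\delta(y)$ for a Borel function $\delta>0$. In particular $v_0(t,x):=\int_{\R^N}U(t,x,z)\psi_0(z)\dd z\ge\int_{\R^N}\delta(z)\psi_0(z)\dd z=:c_0>0$. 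As in Section~\ref{sec:reduction}, the exponential decay in $y$ gives $U(t,x,y)=\sum_i v_i(t,x)\psi_i(y)$ with $v_i(t,x)=\int U(t,x,z)\psi_i(z)\dd z$ uniformly bounded (by $\|U(t,x,\cdot)\|_{L^2(\R^N)}$), and the equation splits into
\[
\partial_t v_i-\Delta_x v_i+\lambda_i v_i=(1-b(t,x))v_i,\qquad b(t,x)=\int_{\R^N}K(z)U(t,x,z)\dd z,
\]
with $0\le b$ bounded. When $\lambda_i>1$: for any $t_0$ and any $T<t_0$, the $x$-independent function $\|v_i\|_{L^\infty}e^{-(\lambda_i-1)(t-T)}$ is a supersolution of $\partial_t\phi-\Delta_x\phi+(\lambda_i-1+b)\phi=0$ on $[T,t_0]\times\R^m$ dominating $|v_i|$ at $t=T$, so comparison for bounded solutions gives $|v_i(t_0,x_0)|\le\|v_i\|_{L^\infty}e^{-(\lambda_i-1)(t_0-T)}\to0$ as $T\to-\infty$; hence $v_i\equiv0$, the sum is finite, $U=\sum_{i=0}^J v_i\psi_i$ with $\lambda_i\le1$ for $i\le J$. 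For $1\le i\le J$, the quotient $w_i:=v_i/v_0$ is bounded (since $v_0\ge c_0$) and, by a direct computation, solves
\[
\partial_t w_i-\Delta_x w_i-2\frac{\nabla v_0}{v_0}\cdot\nabla w_i+(\lambda_i-\lambda_0)w_i=0,
\]
with $\lambda_i-\lambda_0>0$ and bounded drift (routine parabolic estimates give $\nabla_x v_0$ bounded); the same supersolution comparison, now with $\|w_i\|_{L^\infty}e^{-(\lambda_i-\lambda_0)(t-T)}$, forces $w_i\equiv0$. Hence $U(t,x,y)=v_0(t,x)\psi_0(y)$.

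It remains to treat $v_0$. Using $\int_{\R^N}\psi_0K=(1-\lambda_0)\mu^{-1}$ from \eqref{eq:-infty}, we get $b=(1-\lambda_0)\mu^{-1}v_0$, so $v:=\mu^{-1}v_0$ is a bounded entire solution of the classical Fisher--KPP equation $\partial_t v-\Delta v=(1-\lambda_0)v(1-v)$ with $\inf v\ge c_0/\mu>0$. Comparing, as $T\to-\infty$, with the solutions of $\dot\phi=(1-\lambda_0)\phi(1-\phi)$ started at $\inf v$ (from below) and at $\sup v$ (from above) forces $v\equiv1$, i.e. $v_0\equiv\mu$ and $U\equiv\mu\psi_0=V$; this contradicts $|U(0,0,y_\infty)-V(y_\infty)|\ge\eps_0$, completing the proof. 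I expect the main obstacle to be the propagation of the one-sided lower bound: Theorem~\ref{thm:asp} only supplies a positive $\liminf$ pointwise in $y$, and converting this into a genuine positive lower bound for $v_0$ on the entire limit $U$ (so that the quotients $w_i$ are bounded and the final KPP Liouville step applies) is the step requiring the most care; a secondary point is justifying all the limit passages, in particular for the nonlocal competition term, via the uniform weighted bound of Lemma~\ref{lem:uniform weighted bound}.
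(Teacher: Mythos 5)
Your proof is correct and takes a genuinely different route from the paper, though it shares the core ingredients of parabolic translation--compactness, the spectral decomposition $u=\sum_i v_i\psi_i$, and a Fisher--KPP Liouville step. The paper works on the original solution mode by mode: Lemma~\ref{lem: the tail convergence} sends the tail $z_J=\sum_{i>J}v_i\psi_i$ to zero via a weak-$L^2$ convergence argument combined with equicontinuity in $y$; Proposition~\ref{prop: finitely many convergence} kills each $v_j$ ($1\le j\le J$) on $\{|x|\le ct\}$ by translation--compactness applied to the quotient $w_j=v_j/v_0$; and a third translation--compactness argument yields $v_0\to\mu$. You instead extract the entire-in-time limit $U$ first and prove a Liouville theorem for $U$: for $\lambda_i>1$, backward-in-time comparison with $\|v_i\|_{L^\infty}e^{-(\lambda_i-1)(t-T)}$ kills $v_i$ outright; for $1\le i\le J$, the same device applied to the bounded quotients $w_i$ (using $\lambda_i>\lambda_0$) gives $w_i\equiv 0$; and ODE comparison forces $v_0\equiv\mu$. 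This unifies the paper's three convergence steps into a single Liouville statement and entirely dispenses with the weak-$L^2$ tail argument. The one delicate point, which you correctly flag, is the lower bound on $v_0$ for the limit: the paper invokes the locally-uniform-in-$y$ strengthening \eqref{convunif-u} of the conclusion \eqref{eq:lower bound speed} (asserted there without a detailed proof), whereas you integrate the pointwise positive liminf $\delta(y)$ against $\psi_0$; this is sound because $\delta$ is Borel (a liminf of continuous functions) and everywhere positive, so $\int\delta\psi_0>0$ and hence $v_0\ge c_0>0$ on $U$. Both routes succeed; yours is arguably cleaner in avoiding the separate tail-convergence lemma and the unproved uniform lower bound.
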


To prove Theorem \ref{convergence}, we will use the same decomposition as in Section \ref{sec:reduction}. We know from Lemma \ref{lem:uniform weighted bound} that $u$ can be written as 
\begin{equation}\label{e:dec-t}
u(t,x,y) = \sum_{i=0}^\infty v_i(t,x) \psi_i(y),
\end{equation}
where $\psi_i$ are those in Lemma \ref{l:spectral}. Then for each $i$, $v_i$ solves
\begin{equation} \label{e:eqvi-t}  
 \partial_t v_i - \Delta v_i + \lambda_i v_i = (1-b(t, x)) v_i,
\end{equation}
where $\lambda_i$ are those in Lemma \ref{l:spectral}, and
\[
b(t,x) := \int_{\R^N} K(z) u(t,x, z) \dd z.
\]

We start with a lemma.
\begin{lemma}
For each $j$ for which $\la_j >1$ the function $v_j(t,x)$ converges exponentially to 0
as $t\to\infty$, uniformly in $x$.
\end{lemma}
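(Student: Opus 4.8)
The plan is to exploit that $v_j$ solves the linear parabolic equation \eqref{e:eqvi-t} with $\lambda_j > 1$, so that the zeroth-order coefficient $1 - \lambda_j - b(t,x)$ is bounded above by the strictly negative constant $1-\lambda_j < 0$ (using $b \geq 0$, which holds since $u \geq 0$ and $K \geq 0$). The natural approach is to dominate $|v_j|$ by a spatially-uniform supersolution that decays exponentially in $t$. Since $v_j(t,x) = \int_{\R^N} u(t,x,z)\psi_j(z)\dd z$, the bound \eqref{eq:bounded by t} together with $\|\psi_0\|_{L^\infty} < \infty$ and Cauchy--Schwarz (or just $\|\psi_j\|_{L^2}=1$ and Lemma \ref{lem:uniform bound}) shows $v_j$ is bounded on $(0,\infty)\times\R^m$, say $|v_j| \leq A$ for some constant $A$; in fact one may take $A$ independent of $t$ from Lemma \ref{lem:uniform weighted bound} and $\|\psi_j\|_{L^2(\R^N)}=1$.

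First I would fix $\varepsilon := \lambda_j - 1 > 0$ and consider the function $\Phi(t) = A e^{-\varepsilon t}$, which (being independent of $x$) satisfies $\partial_t \Phi - \Delta \Phi + \lambda_j \Phi = (1-\varepsilon)\Phi = \Phi$, hence $\partial_t \Phi - \Delta \Phi + \lambda_j \Phi - (1-b)\Phi = b\Phi \geq 0$ since $b \geq 0$. Thus $\Phi$ is a supersolution and $-\Phi$ a subsolution of the linear equation \eqref{e:eqvi-t}. Since $|v_j(0,x)| \leq A = \Phi(0)$ (after possibly enlarging $A$, noting $v_j(0,\cdot)$ comes from the compactly supported $u_0$ via \eqref{eq: bound u0}), the comparison principle for the linear parabolic operator $\partial_t - \Delta + (\lambda_j + b(t,x) - 1)$ — whose zeroth-order coefficient is bounded — yields $-\Phi(t) \leq v_j(t,x) \leq \Phi(t)$ for all $(t,x)$, i.e.
\[
|v_j(t,x)| \leq A e^{-(\lambda_j - 1) t} \quad \text{for all } (t,x) \in (0,\infty)\times\R^m.
\]
This is exactly the claimed uniform exponential decay.

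The one point requiring a little care — and the main (mild) obstacle — is justifying the comparison principle on the unbounded domain $\R^m$ with a merely bounded (not decaying) solution $v_j$. This is handled by a standard device: compare $v_j$ with $\Phi(t) + \delta e^{Kt}(1+|x|^2)$ for small $\delta > 0$ and suitably large $K$ (the extra term is a strict supersolution that dominates near spatial infinity), apply the maximum principle on bounded cylinders, and then let $\delta \to 0$; the argument is entirely analogous to the comparison arguments already used in the proofs of Lemma \ref{l:finite summation} and Lemma \ref{lem:uniform weighted bound}. Alternatively, one can simply invoke that, by Lemma \ref{lem:uniform weighted bound} and $\psi_j \in L^2$, $v_j$ is globally bounded, which is the standing hypothesis under which the parabolic comparison principle for equations with bounded coefficients applies. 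I expect the decay rate $\lambda_j - 1$ to be the honest exponent, though only its positivity matters for the statement.
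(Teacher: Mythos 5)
Your proof is correct and follows the same approach as the paper: rewrite \eqref{e:eqvi-t} as $\partial_t v_j - \Delta v_j + \gamma v_j = 0$ with $\gamma = \lambda_j - 1 + b \geq \lambda_j - 1 > 0$, use the global boundedness of $v_j$ from Lemma \ref{lem:uniform weighted bound}, and compare with the supersolution $Ae^{-(\lambda_j-1)t}$. The paper states this more tersely, but the content — including the decay rate $\lambda_j - 1$ — is identical to yours.
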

\begin{proof}
From the equation \eqref{e:eqvi-t}, the function $v_j$ satisfies 
 \[
 \partial_t v_i - \Delta v_i + \gamma v_i = 0,
 \]
 where $\gamma(t,x) \geq \gamma_0 >0$ for all $t,x$. Lemma~\ref{lem:uniform weighted bound} and the comparison principle yield
\begin{equation}\label{eq:exp decay vj}
|v_j(t,x)| \leq M e^{-\gamma_0 t}
\end{equation}
for some constant $M>0$.
\end{proof}

\begin{lemma}\label{lem: the tail convergence}
Let $J\geq 1$ be an integer such that $\la_{J+1} >1$. Let $z_J$ be defined by
\[
z_J= \sum_{i=J+1}^\infty v_i(t,x) \psi_i(y).
\]
Then, 
$z_J(t,x,y)$ converges to 0 as $t\to\infty$, uniformly in $x$ and locally uniformly in $y$.
\end{lemma}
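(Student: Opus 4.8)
The plan is to control all of the infinitely many modes in $z_J$ simultaneously through an energy estimate in the trait variable $y$, where the hypothesis $\la_{J+1}>1$ provides a spectral gap that beats the reaction term uniformly. Accordingly, I would set
\[
h(t,x):=\int_{\R^N}z_J(t,x,y)^2\,\dd y=\sum_{i=J+1}^\infty v_i(t,x)^2,
\]
the second equality being Parseval's identity. Since $z_J=u-\sum_{i=0}^J v_i\psi_i$ is smooth and, by Lemma~\ref{lem:uniform weighted bound}, Proposition~\ref{prop:eigenfunction exp in y} (whose proof applies verbatim to each eigenfunction $\psi_i$) and interior parabolic estimates, $z_J$ together with its derivatives decays exponentially in $y$ uniformly in $(t,x)$ at a rate that can be taken as large as we wish, it follows that $h$ and $b(t,x)=\int_{\R^N}K(z)u(t,x,z)\,\dd z$ are bounded, and $h$ may be differentiated under the integral sign. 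Multiplying the equation $\partial_t z_J-\Delta z_J+\alpha g z_J=(1-b)z_J$ by $z_J$, integrating over $y\in\R^N$ and integrating by parts in $y$ (the decay kills boundary terms) gives
\[
\tfrac12\partial_t h=\tfrac12\Delta_x h-\int_{\R^N}|\nabla_x z_J|^2\,\dd y-\langle z_J,\mathcal L z_J\rangle_{L^2(\R^N)}+(1-b(t,x))\,h.
\]

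The key point is the spectral gap: since $z_J$ lies in the closed span of $\{\psi_i\}_{i\ge J+1}$,
\[
\langle z_J,\mathcal L z_J\rangle_{L^2(\R^N)}=\sum_{i=J+1}^\infty\la_i v_i^2\ \ge\ \la_{J+1}\sum_{i=J+1}^\infty v_i^2=\la_{J+1}\,h.
\]
Using in addition $b\ge 0$ and discarding the nonnegative gradient term, I obtain the scalar parabolic inequality
\[
\partial_t h-\Delta_x h\le -\mu_0\,h,\qquad \mu_0:=2(\la_{J+1}-1)>0.
\]
Now let $C_h:=\sup_{t>0,\,x}h(t,x)<\infty$. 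For $t\ge 1$ the spatially constant function $C_he^{-\mu_0(t-1)}$ solves $\partial_t w-\Delta_x w+\mu_0 w=0$ and dominates $h$ at $t=1$, and $h$ is bounded on $[1,\infty)\times\R^m$; the maximum principle for bounded solutions of $\partial_t-\Delta_x+\mu_0$ then yields $h(t,x)\le C_he^{-\mu_0(t-1)}$, so $\sup_x h(t,x)\to 0$ as $t\to\infty$.

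Finally I would upgrade this $L^2_y$ decay to the pointwise statement. Fixing $R>0$, on $\{|y|\le R+2\}$ the function $z_J$ solves the linear parabolic equation $\partial_t z_J-\Delta z_J=(1-b(t,x)-\alpha g(y))z_J$ whose zeroth-order coefficient is bounded by some $\Lambda_R$ (as $g$ is bounded there and $b$ is bounded); interior parabolic $L^2$-to-$L^\infty$ estimates (De Giorgi--Nash--Moser) then give, for $t\ge 2$, $x\in\R^m$ and $|y|\le R$,
\[
|z_J(t,x,y)|^2\le C_R\int_{t-4}^{t}\int_{|x'-x|\le 2}\int_{|y'-y|\le 2}z_J(t',x',y')^2\,\dd y'\,\dd x'\,\dd t'\le C_R'\sup_{s\ge t-4,\,x'}h(s,x'),
\]
which tends to $0$ as $t\to\infty$, uniformly in $x$ and uniformly for $|y|\le R$. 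This is exactly the claimed convergence.

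The hard part is the middle step: identifying $h$ as the right scalar quantity, deriving the differential inequality for it, and recognizing that the gap $\la_{J+1}-1>0$ dominates the reaction contribution $+h$ simultaneously over all of the infinitely many modes; once this inequality is in hand, the comparison argument and the local parabolic regularity estimate are routine.
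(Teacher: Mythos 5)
Your proof is correct, but it takes a genuinely different route from the paper's.

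The paper's argument proceeds in three steps: (i) the uniform bound $\|z_J(t,x,\cdot)\|_{L^2(\R^N)}\le C$ from Parseval; (ii) \emph{weak} $L^2$-convergence $z_J(t,x,\cdot)\rightharpoonup 0$ uniformly in $x$, obtained by approximating any test function by a finite linear combination of $\psi_i$ and invoking the previous lemma (exponential decay of each individual $v_j$ with $\la_j>1$); (iii) an upgrade to pointwise convergence using that $z_J$ is Lipschitz in $y$ uniformly in $(t,x)$ for $t>1$, so that small averages over small balls force small pointwise values. Your argument instead derives a parabolic differential inequality for $h(t,x)=\|z_J(t,x,\cdot)\|_{L^2(\R^N)}^2$ directly, using the spectral gap $\la_{J+1}>1$ as a uniform damping, and deduces \emph{strong} exponential $L^2_y$-decay uniform in $x$ by comparison with $C_h e^{-\mu_0(t-1)}$; the pointwise statement then follows from the local $L^2$-to-$L^\infty$ parabolic estimate. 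This is a cleaner and in fact stronger route: it yields an exponential rate rather than mere convergence, it bypasses the preliminary lemma on individual $v_j$ entirely, and it exploits the hypothesis $\la_{J+1}>1$ in one stroke through the quadratic form of $\mathcal L$ rather than mode by mode. The two steps you should keep tidy in a final write-up are (a) the justification that $h$ may be differentiated under the integral sign and that the $y$-integration by parts has no boundary contribution, which follows from Lemma~\ref{lem:uniform weighted bound}, the eigenfunction decay in Proposition~\ref{prop:eigenfunction exp in y} (and its extension to $\psi_1,\dots,\psi_J$), and interior parabolic gradient estimates; and (b) the comparison principle on $\R^m$ used to conclude $h\le C_h e^{-\mu_0(t-1)}$, which, since $h$ is bounded and $\mu_0>0$, is a routine Phragm\'en--Lindel\"of-type argument (subtract $\eps(|x|^2+2mt)$ and send $\eps\to 0$). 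With those two points spelled out, the proof is complete and correct.
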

\begin{proof}
To start with,  we have a bound for all $t>0,x\in\R^m$,
\[
\sum_{i=0}^\infty v_j^2(t,x)=\int_{\R^N}u^2(t,x,y)\dd y \leq C.
\]
for some constant $C>0$. Therefore,
\[
\|z_J(t,x,\cdot)\|_{L^2(\R^N)}\le C.
\]
We claim that
\[
z_J(t,x,\cdot)\rightharpoonup 0\quad\mbox{weakly in }L^2(\R^N) \quad\mbox{as }t\to\infty.
\]
Indeed, for every $\varphi\in L^2(\R^N),$ for every $\eps>0$,  there exist an integer $\ell>0$ and $\mu_i\in\R$ such that
\[
\|\varphi-\sum_{i=0}^\ell \mu_i\psi_i\|_{L^2(\R^N)}\le\eps.
\]
Therefore,
\[
\begin{split}
&|\int_{\R^n}z_J(t,x,y)\varphi(y)\dd y|\\
&\le \sum_{i=0}^\ell  |\int_{\R^n}z_J(t,x,y)\mu_i\psi_i\dd y|+ |\int_{\R^n}z_J(t,x,y)(\varphi-\sum_{i=0}^\ell\mu_i\psi_i)\dd y|\\
&\le \sum_{i=J+1}^\ell |\mu_i v_i(t,x)|+C\eps\\
&\le 2C\eps \quad \mbox{for all}\quad  t\ge T,
\end{split}
\]
 where $T$ is sufficiently large but independent of $x$, and we used \eqref{eq:exp decay vj} in the last inequality. This proves the claim.

Let $R>0$. Since $z_J=u-\sum_{i=0}^Jv_i\psi_i$ is  Lipschitz continuous uniformly in $(t,x)$ and locally uniformly in $y$ for $t>1$, for every $\eps>0$, there exists $r>0$ such that for all $t>1, x\in\R^m, |y|\le R$, the oscillation of $z_J$ in the ball $B_r(y)$ satisfies the bound:
\[
\osc_{B_r(y)}z_J(t,x,\cdot)\le\eps.
\]
By the weak convergence, there exists $T=T(y)>0$ such that for all $x\in\R^m$ and all $t>T$,
\[
|\frac{1}{|B_r(y)|}\int_{B_r(y)}z_J(t,x,z)\dd z|\le\eps.
\]
Consequently, we have for $|y|\le R$,
\[
|z_J(t,x,z)|\le 2\eps\quad \mbox{for all}\;\,  z\in B_{r}(y),\;\, \mbox{all}\;\,  t>T=T(y), \;\, \mbox{and all}\;\,  x\in \R^m.
\]
Therefore, we can conclude that
\[
z_J(t,x,y)\to 0\quad \mbox{as }t\to\infty\quad\mbox{uniformly in }x \mbox{ and locally uniformly in }y.
\]
\end{proof}

Therefore, to prove Theorem \ref{convergence}, we only have to deal with the finite sum
$u-z_J$. We are going to prove that the finite number of functions $v_1(t,x), \ldots, v_{J}(t,x) $ converge to 0. 

Let
$w_i: = v_i/v_0$. 
Using the equation \eqref{e:eqvi-t} we derive an equation for $w_i$:
 \begin{equation}\label{eq:wi-tx}
  \partial_t w_i - \lap w_i -\frac{2\nabla v_0}{v_0} \cdot \nabla w_i + (\lambda_i-\lambda_0)w_i = 0.
 \end{equation}
 
 Owing to Theorem \ref{thm:asp}, we know that $\liminf_{t\to+\infty} v_0(t,x) >0$ locally uniformly in $x$. Actually, we have a stronger information on the limit. Let $0<\gamma < c^*$. For any  $A>0$, there exists $\rho>0$ such that:
 \begin{equation} \label{convunif-u}
 u(t,x, y) \geq \rho, \quad \text{for all} \quad t\geq 1, |x| \leq \gamma t, |y|\leq A.
 \end{equation}
 Together with \eqref{eq:assumption-2}, this implies the existence of $\delta=\delta(\gamma)>0$ such that
  \begin{equation} \label{convunif}
 v_0(t,x) \geq \delta, \quad \text{for all} \quad t\geq 1, |x| \leq \gamma t.
 \end{equation}
 
 \begin{prop}\label{prop: finitely many convergence}
 Let $0\le c<c^*$. For each $j=1, \ldots, J$, we have
 \[
 \lim_{t\to+\infty} \sup_{|x|\le ct}|v_j(t,x)|= 0.
 \]
 \end{prop}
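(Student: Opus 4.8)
The plan is to show that each ratio $w_j = v_j/v_0$ (for $1\le j\le J$) tends to $0$ as $t\to+\infty$ locally uniformly in $x$, in fact uniformly on $|x|\le ct$, and then combine this with the already-established lower bound $v_0(t,x)\ge\delta$ on $|x|\le\gamma t$ (inequality \eqref{convunif}) and the uniform upper bound on $v_0$ from Lemma \ref{lem:uniform weighted bound} to conclude $v_j\to 0$ uniformly on $|x|\le ct$. So the real work is to analyze the parabolic equation \eqref{eq:wi-tx} for $w_i$, namely $\partial_t w_i - \Delta w_i - \frac{2\nabla v_0}{v_0}\cdot\nabla w_i + (\lambda_i-\lambda_0)w_i = 0$, where the drift coefficient $2\nabla v_0/v_0$ is bounded (by interior parabolic estimates for the equation \eqref{e:eqvi-t} satisfied by $v_0$ together with \eqref{convunif} and the uniform bound on $u$) on the region where $v_0$ stays bounded away from $0$, and the zeroth-order coefficient $\lambda_i-\lambda_0$ is a strictly positive constant.

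First I would fix $c<\gamma<c^*$ and restrict attention to the space-time region $\{|x|\le\gamma t,\ t\ge 1\}$, on which $v_0\ge\delta$ and hence the coefficients of \eqref{eq:wi-tx} are uniformly bounded with a positive zeroth-order term. By Lemma \ref{l:quotient is bounded}-type reasoning (the argument there used the steady equation, but the same orthogonality-to-$\psi_0$ contradiction works here: if $|w_k(t_n,x_n)|\to\infty$ along a sequence, rescale $u/v_\ell$ and pass to a locally uniform limit which must be sign-changing, contradiction) — or more simply from $\sum_i v_i^2 = \int u^2\,dy\le C$ together with \eqref{convunif} — one gets that the $w_i$ are uniformly bounded on $\{|x|\le\gamma t\}$. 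Then I would argue by contradiction: suppose $\limsup_{t\to\infty}\sup_{|x|\le ct}|w_j(t,x)| = 2a > 0$ for some $j$. Pick space-time points $(t_n,x_n)$ with $|x_n|\le ct_n$, $t_n\to\infty$, and $|w_j(t_n,x_n)|\ge a$; among all the indices $1,\dots,J$ choose (after passing to a subsequence) the index $\ell$ realizing $\max_{1\le i\le J}|w_i(t_n,x_n)|=:m_n$, so $m_n\ge a$. Translate: set $\tilde w_i^{(n)}(t,x) = w_i(t+t_n, x+x_n)$, $\tilde v_0^{(n)}(t,x)=v_0(t+t_n,x+x_n)$. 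Because $|x_n|\le ct_n$ with $c<\gamma$, every point of a fixed bounded neighborhood of the origin lies in $\{|x|\le\gamma(t+t_n)\}$ for $n$ large, so the translated functions solve \eqref{eq:wi-tx} with uniformly bounded coefficients and $\tilde v_0^{(n)}\ge\delta$ there. By parabolic estimates, pass to locally uniform limits $\tilde w_i^{(n)}\to W_i$, $\tilde v_0^{(n)}\to V_0^\infty\ge\delta$; the $W_i$ are bounded, solve the limiting equation $\partial_t W_i - \Delta W_i - \frac{2\nabla V_0^\infty}{V_0^\infty}\cdot\nabla W_i + (\lambda_i-\lambda_0)W_i = 0$ on all of $\R\times\R^m$, and $|W_\ell(0,0)| = \lim m_n \ge a > 0$ while $|W_i|\le|W_\ell(0,0)|$ pointwise (by maximality of $\ell$ in the limit — here I'd want the $\ell$ to be chosen so that $|w_\ell(t_n,x_n)|$ is, up to $o(1)$, the sup over a slightly larger space-time neighborhood; a diagonal/maximal-point adjustment handles this). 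Now apply the strong maximum principle to $W_\ell$ (or to $-W_\ell$, WLOG $W_\ell(0,0)=\sup W_\ell>0$): since the zeroth-order coefficient $\lambda_\ell-\lambda_0>0$ is strictly positive and $W_\ell$ attains a positive interior maximum at $(0,0)$, we get a contradiction exactly as in the proof of Theorem \ref{Liouville-stat} — at an interior positive max one has $\partial_t W_\ell\ge 0$, $\Delta W_\ell\le 0$, $\nabla W_\ell=0$, forcing $(\lambda_\ell-\lambda_0)W_\ell(0,0)\le 0$, absurd.

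The main obstacle I anticipate is the bookkeeping in the selection of the "dominant index" $\ell$ and the passage to the limit: one needs $W_\ell$ to genuinely attain its supremum (over space-time) at the finite point $(0,0)$, not merely to have $|W_\ell(0,0)|\ge a$ with the sup possibly larger or attained at infinity. The clean way is to introduce $m^* := \limsup_{t\to\infty}\sup_{|x|\le ct}\max_{1\le i\le J}|w_i(t,x)|$, which is finite and (by assumption towards a contradiction) positive, choose $(t_n,x_n)$ and $\ell$ so that $\max_i|w_i(t_n,x_n)|\to m^*$, and then the limit $W_\ell$ satisfies $\sup_{\R\times\R^m}|W_\ell|\le m^*$ with equality attained at $(0,0)$ — and again, up to the sign, this is a positive interior maximum, so the strong maximum principle with the strictly positive zeroth-order term $\lambda_\ell-\lambda_0$ gives the contradiction. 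Once $w_j\to 0$ uniformly on $|x|\le ct$ is established for each $j=1,\dots,J$, multiplying by $v_0$ (which is bounded above uniformly by Lemma \ref{lem:uniform weighted bound} and its consequence for $v_0=\int u\psi_0$) yields $\sup_{|x|\le ct}|v_j(t,x)|\to 0$, which is the assertion of the proposition. I note that the authors themselves remarked after Theorem \ref{Liouville-stat} that the translation-and-compactness scheme applies here, so this is the intended route.
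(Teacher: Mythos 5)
Your translation-and-compactness setup — defining $w_j=v_j/v_0$, exploiting \eqref{convunif} to get a lower bound on $v_0$ and hence boundedness of $w_j$ and of the drift $2\nabla v_0/v_0$ on the cone $\{|x|\le\gamma t\}$, then translating by $(t_n,x_n)$ and passing to a limit solving \eqref{eq:Wlim} globally in time — is exactly the paper's scheme. Where you diverge is the final step. The paper compares $|W|$ against the exponential solution $e^{(\lambda_0-\lambda_j)t}$ of the limit equation: starting from $|W(\tau,\cdot)|\le M:=\sup|W|$ and pushing forward, $|W(t,x)|\le Me^{(\lambda_0-\lambda_j)(t-\tau)}$ for all $t\ge\tau$, and since $\lambda_0-\lambda_j<0$ sending $\tau\to-\infty$ kills $W$. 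That argument uses only boundedness and time-globality; it never needs the supremum to be attained. Your route instead forces $W_\ell$ to attain an interior positive maximum at $(0,0)$ and then applies the pointwise contradiction ($\partial_t W_\ell=0$, $\nabla W_\ell=0$, $\Delta W_\ell\le 0$ against $(\lambda_\ell-\lambda_0)W_\ell(0,0)>0$). This is a valid alternative, and indeed it is the mechanism the paper uses in the stationary Liouville theorem (\ref{Liouville-stat}); but for the time-dependent statement it costs you the sup-attainment bookkeeping, which your $m^*$ fix does not quite deliver as written: since translations of a fixed neighborhood of $(0,0)$ around centers with $|x_n|\le ct_n$ land in $\{|x|\le\gamma t\}$ for $\gamma>c$ but generally not in $\{|x|\le ct\}$, you must define $m^*$ as $\limsup_{t\to\infty}\sup_{|x|\le\gamma t}$ for an intermediate $\gamma\in(c,c^*)$, choose the translation centers inside $\{|x|\le\gamma' t\}$ with $\gamma'<\gamma$, and note separately (via $v_0\ge\delta$ and $|v_j(t_n,x_n)|\ge\eta$) that $m^*>0$. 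Also, the "dominant index" device is unnecessary here — unlike in Lemma \ref{l:quotient is bounded}, you already know each $w_i$ is bounded (from $\sum v_i^2\le C$ and $v_0\ge\delta$), so you can run the whole argument for each fixed $j$ separately with $m^*_j:=\limsup_{t\to\infty}\sup_{|x|\le\gamma t}|w_j|$. With those adjustments the proof goes through, but the paper's comparison against $e^{(\lambda_0-\lambda_j)t}$ is noticeably leaner precisely because it sidesteps all of this.
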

 \begin{proof}
  We argue by contradiction. Suppose there are $0\le c<c^*$, $\eta>0$, and a sequence $\{(t_n,x_n)\}$ with
\[
 \begin{cases}
  |x_n|\le c t_n\quad\mbox{for all }n\in\mathbb N,\\
   t_n\to\infty\quad\mbox{and } |v_j (t_n,x_n)|\ge\eta\quad\mbox{as }n\to\infty.
 \end{cases}
\]
 Choose $\gamma$ such that $c<\gamma<c^*$. 
We translate the functions $v_0$, $w_j$ in time and space to define
\[
V_n(t,x) := v_0(t_n+t, x+x_n), \quad \text{and} \quad W_n(t, x) := w_j(t_n+t, x+x_n)
\]
for $t\geq -t_n$ and $x\in\R^m$. 
We also have the equation:
\begin{equation}\label{eq:Wk-tx}
  \partial_t W_n - \lap W_n -\frac{2\nabla V_n}{V_n} \cdot \nabla W_n + (\lambda_j-\lambda_0)W_n = 0.
 \end{equation}
From \eqref{convunif}, we observe that $V_n$ is bounded from below by $\delta>0$ on the larger and larger set $\Omega_n:=\{(t,x): |x|\le(\gamma-c)t_n +\gamma t\}$ as $n\to\infty$. Therefore, $W_n$ and the coefficient of the gradient term $\frac{2\nabla V_n}{V_n} $ are bounded on  $\Omega_n$ as $n\to\infty$. By parabolic estimates, up to striking out a subsequence, we obtain the convergence of  
$V_n$ and $W_n$ locally uniformly  to $V_\infty$ and $W$. The limit functions satisfy an equation defined for all $t\in\R$ and $x\in\R^m$:
\begin{equation}\label{eq:Wlim}
  \partial_t W- \lap W -\frac{2\nabla V_\infty}{V_\infty} \cdot \nabla W + (\lambda_j-\lambda_0)W = 0,\quad t\in\R, x\in\R^m.
 \end{equation}
Furthermore, we know that
\[
|W(0, 0) |>0 , \quad \text{and} \quad V_\infty(t, x) \geq \delta \quad \forall \ t\in\R, x\in\R^m.
\]
Therefore, the equation \eqref{eq:Wlim} has bounded coefficients. Moreover, $W$ is bounded since $V_\infty$ is bounded from below,  and $W$ is a time-global solution (i.e. defined for all $t$). Denote $M:=\sup_{\R\times\R^m} |W(t,x)|$. The function
\[
e^{(\la_0 - \la_j) t}
\] 
is a solution of  \eqref{eq:Wlim}. From the comparison principle applied to \eqref{eq:Wlim} we get:
\[
|W(t,x)| \leq M e^{(\la_0 - \la_j) (t-\tau)} \quad \text{for all} \quad t\geq \tau.
\]
Letting $\tau\to -\infty$, we get $W(t, x) = 0$ for all $t$ and all $x$. This is in contradiction with the value of $W$ at  $(0, 0)$. The proof of the proposition is thereby complete.
\end{proof}

\begin{proof}[Proof of Theorem \ref{convergence}] 
We are going to first derive the following limit:
\begin{equation}\label{eq:v0 asymptotic}
\sup_{|x|\le ct} |v_0(t,x)- \mu|\to 0 \quad\mbox{as } t\to\infty,
\end{equation}
where $\mu$ is given in \eqref{eq:-infty}. Letting
\[
\tilde b(t,x)=-\int_{\R^N}K(z) [ u(t,x,z)-v_0(t,x)\psi_0(z) ]\dd z,
\]
we have the following equation:
\[
\partial_t v_0-\Delta v_0=(\tilde b(t,x)+1-\lambda_0-\mu^{-1}(1-\lambda_0)v_0)v_0,
\]

The proof of \eqref{eq:v0 asymptotic} is similar to the proof of Proposition \ref{prop: finitely many convergence}. Suppose there are $0\le c<c^*$, $\eta>0$, and a sequence $\{(t_k,x_k)\}$ such that
\[
 \begin{cases}
  |x_k|\le c t_k\quad\mbox{for all }n\in\mathbb N,\\
   t_k\to\infty\quad\mbox{and } |v_0 (t_k,x_k)-\mu|\ge\eta\quad\mbox{as }n\to\infty.
 \end{cases}
\]
Choose $\gamma$ such that $c<\gamma<c^*$.  From Lemma \ref{lem: the tail convergence}, Proposition \ref{prop: finitely many convergence}, Lemma \ref{lem:uniform weighted bound} and the dominated convergence theorem, we infer that
\[
\lim_{t\to\infty} \sup_{|x|\le \gamma t}\tilde b(t,x)=0.
\]
Define
\[
V_k(t,x)=v_0(t_k+t, x+x_k),\quad B_k(t,x)=\tilde b(t_k+t, x+x_k),
\]
which satisfies
\[
\partial_t V_k-\Delta V_k=(B_k(t,x)+1-\lambda_0-\mu^{-1}(1-\lambda_0)V_k)V_k.
\]
From \eqref{convunif}, we observe that $V_k\ge \delta$ on the larger and larger set $\{(t,x): |x|\le(\gamma-c)t_k+\gamma t\}$ as $k\to\infty$. Moreover, $B_k(t,x)\to 0$ locally uniformly as $k\to\infty$.
From the upper bound of $v_0$ we can strike out a subsequence such that $V_k$ converges locally uniformly to a bounded function $V_\infty$, which satisfies $V_\infty\ge\delta$ in $\R\times\R^m$, and 
\begin{equation}\label{eq:convergence v0}
\partial_t V_\infty-\Delta V_\infty=(1-\lambda_0-\mu^{-1}(1-\lambda_0)V_\infty)V_\infty\quad\mbox{for all }(t,x)\in \R\times\R^m.
\end{equation}
We claim that 
\[
V_{\infty}\equiv \mu\quad\mbox{in } \R\times\R^m.
\]
The proof of the claim is as follows. Let $M=\sup_{\R\times\R^m} V_\infty$ and $(t_n,x_n)\in \R\times\R^m$ be such that $V_\infty(t_n,x_n)\to M$ as $n\to\infty$. Let $W_n(t,x)=V_\infty(t+t_n,x+x_n)$.  Then subject to a subsequence, $W_k$ converges locally uniformly to $W_\infty$, which also satisfies \eqref{eq:convergence v0}. Moreover, $W_\infty(0,0)=M=\sup_{\R\times\R^m}W_\infty$. By evaluating at $(0,0)$, we have $M\le \mu$, i.e., $\sup_{\R\times\R^m} V_\infty\le \mu$. Similarly, one can show that $\mu\le \inf_{\R\times\R^m} V_\infty.$
Therefore, $V_\infty\equiv \mu$. 

Hence, $v_0(t_k,x_k)=V_k(0,0)\to\mu$, which is a contradiction. This proves \eqref{eq:v0 asymptotic}.

Once we have the limit \eqref{eq:v0 asymptotic}, Lemma \ref{lem: the tail convergence} and Proposition \ref{prop: finitely many convergence} yield 
\[
\lim_{t\to\infty}\sup_{|x|\le ct}|u(t,x,y)-V(y)|=0\quad\mbox{locally uniformly in}\quad y.
\]
Since both $u$ and $V$ are uniformly exponentially decaying as $|y|\to\infty$, we can conclude that the above convergence is uniform in $y$.
\end{proof}

The proof of Theorem \ref{t:into-asp} is thereby complete. 
\hfill \qed

\appendix

\section{Local maximum principle}
In this appendix, we provide a short proof of the local maximum principle for heat equations, which was used in the proof of Lemma \ref{lem:uniform bound}. The following statement and its proof are well-known, and we include them here for the purpose of completeness. 

\begin{lemma}\label{lem:local max principle}
Let $u\in C^{2,1}_{x,t}(Q_1)$ be a nonnegative solution of
\[
u_t-\Delta u +c(t,x) u\le 0\quad\mbox{in }Q_1:=(0,1]\times B_1,
\]
where $c(t,x)$ is a bounded function in $Q_1$. Then there exists a positive constant $C$ depending only on $n$ and $\|c^-\|_{L^\infty(Q_1)}$ such that
\[
u(t,x)\le C\int_{Q_1} u\quad\mbox{for all }(t,x)\in [15/16,1]\times B_{1/4}.
\]
\end{lemma}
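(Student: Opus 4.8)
## Proof Proposal for Lemma A.1 (Local Maximum Principle)

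The plan is to reduce to the well-known local boundedness estimate for subsolutions of the heat operator (a parabolic analogue of the local maximum principle for elliptic equations) by first absorbing the zeroth-order term $c(t,x)u$. Since $u\ge 0$ and $c^-\in L^\infty(Q_1)$, we have $-c(t,x)u\le c^-(t,x)u\le \|c^-\|_{L^\infty(Q_1)}u$, so the hypothesis $u_t-\Delta u+c(t,x)u\le 0$ gives $u_t-\Delta u\le \|c^-\|_{L^\infty(Q_1)}u$. Setting $v(t,x)=e^{-\|c^-\|_{L^\infty(Q_1)}t}u(t,x)$, a direct computation shows $v_t-\Delta v\le 0$ in $Q_1$, i.e.\ $v$ is a nonnegative subsolution of the heat equation. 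Moreover $v$ and $u$ are comparable on $Q_1$ up to a constant depending only on $\|c^-\|_{L^\infty(Q_1)}$ (since $t\in[0,1]$), so it suffices to prove the estimate for $v$.

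The core step is then the parabolic local maximum principle / mean-value inequality for nonnegative subsolutions of the heat equation: there is a dimensional constant $C_n$ such that
\[
\sup_{[15/16,1]\times B_{1/4}} v \le C_n \int_{Q_1} v \,\dd t\,\dd x .
\]
I would prove this by a Moser-type iteration, or more simply by the following soft argument. Extend $v$ to a subsolution on a slightly larger cylinder (or just work on $Q_1$ directly), and use the representation via the heat kernel: for a point $(t_0,x_0)$ with $t_0\in[15/16,1]$, $x_0\in B_{1/4}$, and a cutoff $\eta\in C_c^\infty(B_{1/2})$ with $\eta\equiv 1$ on $B_{1/4}$, the function $\eta^2 v$ is a subsolution of $w_t-\Delta w\le f$ with $f$ supported in the region where $\nabla\eta\ne 0$ and controlled by lower-order energy quantities; applying Duhamel and the Gaussian bounds on the heat kernel, together with a Caccioppoli (energy) estimate on $Q_1$ to bound $\int |\nabla v|^2$ by $\int v^2$ and then $\int v^2$ by $(\sup v)(\int v)$, one closes the estimate. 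Alternatively, one simply cites the standard reference (e.g.\ Lieberman, \emph{Second Order Parabolic Differential Equations}, or Moser's original parabolic Harnack paper) since the statement is classical; the excerpt itself notes the proof is included only for completeness.

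Tracking the constants: in the reduction step the constant is $e^{\|c^-\|_{L^\infty(Q_1)}}$, and in the iteration/kernel step the constant is purely dimensional, so the final constant $C$ depends only on $n$ and $\|c^-\|_{L^\infty(Q_1)}$, as claimed. The separation $15/16 > 1/2$ in time and $1/4 < 1/2 < 1$ in space guarantees the required parabolic distance from the (lateral and initial) boundary of $Q_1$ so that the cutoff argument has room to operate. The main obstacle, if one insists on a self-contained proof, is carrying out the Moser iteration cleanly—keeping the chain of Sobolev embeddings and the geometric series of shrinking cylinders under control—but this is entirely routine; the only genuinely necessary observations are the exponential change of unknown to kill the zeroth-order term and the fact that $u\ge0$ lets us replace $c$ by $\|c^-\|_{L^\infty}$ with the correct sign.
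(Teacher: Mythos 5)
Your proposal is correct and follows the paper's skeleton for the first two steps: the exponential tilt $v = e^{-\|c^-\|_\infty t}u$ to kill the zeroth-order term, and a cutoff $\eta$ followed by a Duhamel representation of $\eta v$ via the heat kernel. Where you diverge is in how the gradient cross term $-2\nabla\eta\cdot\nabla v$ in the forcing is controlled. You propose an energy (Caccioppoli) estimate $\int|\nabla v|^2\lesssim\int v^2$, then interpolate $\int v^2\le(\sup v)\int v$, and close by absorbing $\sup v$ via Young's inequality and an iteration over shrinking cylinders; this is a valid Moser-style route but requires the extra bookkeeping you acknowledge. The paper avoids all of that: after inserting $f=(\eta_t-\Delta\eta)\tilde u-2\nabla\eta\cdot\nabla\tilde u$ into the Duhamel formula, it integrates by parts in the spatial variable to move the derivative off $\tilde u$ onto the smooth, bounded quantity $G(t-s,x-y)\nabla\eta(y)$, so the right-hand side is immediately bounded by $C\int_{Q_1}\tilde u$ with no $L^2$ quantities, no absorption, and no iteration. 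The crucial geometric point, which your write-up handles correctly with the $15/16$ and $1/4$ margins, is that $f$ is supported away from the target points $(t,x)$, so the heat kernel and its gradient stay uniformly bounded there. In short: same reduction and same Duhamel starting point, but the paper's integration-by-parts trick is the more elementary way to finish, directly producing the $L^1$ bound that the statement asks for.
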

\begin{proof}
Let $c_0=\|c^-\|_{L^\infty(Q_1)}$ and $\tilde u=e^{-c_0t}u$. Then
\[
\tilde u_t-\Delta \tilde u\le e^{-c_0t}(-c_0 -c(t,x))u\le 0\quad\mbox{in }Q_1.
\]
Let $\eta(t,x)$ be a smooth nonnegative cut-off function such that $\eta\equiv 1$ in $[3/4,1]\times B_{1/2}$, $\eta\equiv 0$ in $([1/2,1]\times B_{3/4})^c$ and $0\le\eta\le 1$. Let $v=\eta \tilde u$. Then it satisfies that
\[
v_t-\Delta v\le (\eta_t-\Delta\eta)\tilde u -2\nabla\eta\nabla \tilde u=:f(t,x).
\]
Let
\[
G(t,x)=\frac{1}{(4\pi t)^{n/2}}e^{-\frac{|x|^2}{4t}}
\]
be the heat kernel.  Therefore, we have
\[
v(t,x)\le \int_0^t\int_{\R^n}G(t-s,x-y)f(s,y)\dd y\dd s.
\]
Then the conclusion follows from integration by parts and the observation that $f\equiv 0$ in $[3/4,1]\times B_{1/2} \cup ([1/2,1]\times B_{3/4})^c$.
\end{proof}

\bibliographystyle{plain}

\begin{thebibliography}{99}

\bibitem{ACR} M. Alfaro, J. Coville and G. Raoul,
          \emph{Travelling waves in a nonlocal reaction-diffusion equation as a model for a population structured by a space variable and a phenotypic trait},
           Comm. Partial Differential Equations \textbf{38} (2013), no. 12, 2126--2154. 

\bibitem{ADP} A. Arnold, L. Desvillettes and C. Prevost,  \emph{Existence of nontrivial steady states for populations structured with respect to space and a continuous trait},
            Commun. Pure Appl. Anal. \textbf{11} (2012), no. 1, 83--96.

\bibitem{BC} H. Berestycki and G. Chapuisat,
          \emph{Traveling fronts guided by the environment for reaction-diffusion equations},
           Netw. Heterog. Media \textbf{8} (2013), no. 1, 79--114.
          
\bibitem{BNPR} H. Berestycki, G. Nadin, B. Perthame and L. Ryzhik,
          \emph{The non-local Fisher-KPP equation: traveling waves and steady states},
          Nonlinearity \textbf{22} (2009), no. 12, 2813--2844. 

\bibitem{BR06} H. Berestycki and L. Rossi, 
          \emph{On the principal eigenvalue of elliptic operators in $\R^N$ and applications},
         J. Eur. Math. Soc. (JEMS) \textbf{8} (2006), no. 2, 195--215. 

 \bibitem {BR} H. Berestycki and L. Rossi,
            \emph{ Reaction-diffusion equations for population dynamics with forced speed. I. The case of the whole space}, 
          Discrete Contin. Dyn. Syst. \textbf{21} (2008), 41--67.         

\bibitem{BMR} N. Berestycki, C. Mouhot and G. Raoul, 
          \emph{$t^{3/2}$ propagation for a toads model}, 
          in preparation. 


\bibitem{BCalvez} E. Bouin and V. Calvez,
          \emph{Travelling waves for the cane toads equation with bounded traits},
           Nonlinearity \textbf{27} (2014), 2233--2253.

\bibitem{BCMMPRV} E. Bouin, V. Calvez, N. Meunier, S. Mirrahimi, B. Perthame,G. Raoul, R. Voituriez,
           \emph{Invasion fronts with variable motility: phenotype selection, spatial sorting and wave acceleration}, 
C. R. Math. Acad. Sci. Paris \textbf{350} (2012), no. 15--16, 761--766. 

 
\bibitem{BM} E. Bouin and S. Mirrahimi,
            \emph{A Hamilton-Jacobi approach for a model of population structured by space and trait},
           to appear in Comm. Math. Sci., \url{arXiv:1307.8332}.

\bibitem{Britton} N.F. Britton,
            \emph{Spatial structures and periodic travelling waves in an integro-differential reaction-diffusion population model},
          SIAM J. Appl. Math. \textbf{50} (1990), no. 6, 1663--1688. 

\bibitem{DFP} L. Desvillettes, R. Ferri\`eres, and C. Prevost,
           \emph{Infinite dimensional reaction-diffusion for population dynamics}, Pr\'epublication du CMLA No. 2003-04, 2003.

\bibitem{Felmer} P. Felmer, A. Quaas and J. Tan, 
             \emph{Positive solutions of the nonlinear Schr\"odinger equation with the fractional Laplacian},
            Proc. Roy. Soc. Edinburgh Sect. A \textbf{142} (2012), no. 6, 1237--1262. 

\bibitem{Getoor}R. K. Getoor, 
           \emph{First passage times for symmetric stable processes in space}, 
           Trans. Amer. Math. Soc. \textbf{101} (1961), 75--90.

\bibitem{G} S.A. Gourley,
           \emph{Travelling front solutions of a nonlocal Fisher equation},
          J. Math. Biol. \textbf{41} (2000), no. 3, 272--284. 

\bibitem{HR}     F. Hamel and L. Ryzhik,
           \emph{On the nonlocal Fisher-KPP equation: steady states, spreading speed and global bounds},
          Nonlinearity \textbf{27} (2014), 2735--2753.


\bibitem{KPP}  A. N. Kolmogorov, I. G. Petrovsky and N. S. Piskunov, 
            \emph{Etude de l'\'equation de la diffusion avec croissance de la quantit\'e de mati\`ere et son application \`a un probl\`eme biologique}, Bull. Univ. Etat Moscou, S\'er. Inter. A, \textbf{1} (1937), 1--26.

\bibitem{silvestre} L. Silvestre,
             \emph{Regularity of the obstacle problem for a fractional power of the Laplace operator},
               Comm. Pure Appl. Math. \textbf{60} (2007), 67--112.
               
\bibitem{OT} O. Turanova,
             \emph{On a model of a population with variable motility}, \url{arXiv:1409.4679}.

\bibitem{WLR} Z-C. Wang, W-T. Li and S. Ruan,
             \emph{Travelling wave fronts in reaction-diffusion systems with spatio-temporal delays},
            J. Differential Equations \textbf{222} (2006), no. 1, 185--232. 





\end{thebibliography}

\small

\index{Bibliography@\emph{Bibliography}}%

\bigskip

\noindent H. Berestycki

\noindent \'Ecole des hautes ́\'etudes en sciences sociales\\
CAMS, 190-198, avenue de France, 75244 Paris cedex 13, France\\[1mm]
Email: \textsf{hb@ehess.fr}

\medskip

\noindent T. Jin

\noindent Department of Mathematics, The University of Chicago\\
5734 S. University Avenue, Chicago, IL 60637, USA\\[1mm]
Email: \textsf{tj@math.uchicago.edu}

\medskip

\noindent L. Silvestre

\noindent Department of Mathematics, The University of Chicago\\
5734 S. University Avenue, Chicago, IL 60637, USA\\[1mm]
Email: \textsf{luis@math.uchicago.edu}

\end{document}